\newtheorem{theorem}{Theorem}
\newtheorem{proposition}[theorem]{Proposition}
\theoremstyle{definition}
\newtheorem{definition}[theorem]{Definition}
\numberwithin{equation}{section} \numberwithin{theorem}{section}
\theoremstyle{remark}
\def\l{\lambda}
\newcommand\lbb[1]{\label{#1}}
\def\tt{\otimes}                               
\def\<{\langle}
\def\>{\rangle}
\def\d{\partial}
\def \tilde{\widetilde}
\def \<{\langle}
\def \>{\rangle}
\def \p{\partial}
\def \c{{*c}}
\def \lbb{\label}
\def\dsum{\displaystyle\sum}
\newcommand{\CC}{\mathbb{C}}
\newcommand{\ZZ}{\mathbb{Z}}
\newcommand{\fg}{\mathfrak{g}}
\def\al{\alpha}                         
\def\be{\beta}
\def\de{\delta}
\def\De{\Delta}
\def\la{\lambda}
\def\La{\Lambda}
\def\om{\omega}
\def \tilde{\widetilde}
\def \la{\lambda}
\def \p{\partial}
\def \pa{\partial}
\def \cp{\mathbb C[\partial]}
\def \l0{L_{\geq 0}}
\begin{document}

\title[Classification of finite simple differential Lie and Jordan
(super)coalgebras]{Classification of simple differential Lie and Jordan
(super)coalgebras of finite rank}

\author [Carina Boyallian and Jos\'e I. Liberati]{Carina Boyallian$^*$ and Jos\'e I. Liberati$^{**}$}
\thanks {\textit{$^{*}$Famaf - Ciem (CONICET), Medina Allende y
Haya de la Torre, Ciudad Universitaria, (5000) C\'ordoba \indent - \indent
 Argentina. Email:
cboyallian@unc.edu.ar}\newline
\indent \textit{$^{**}$Ciem - CONICET, Medina Allende y
Haya de la Torre, Ciudad Universitaria, (5000) C\'ordoba \ - \indent
Argentina. Email:
joseliberati@gmail.com}}
\address{\textit{Famaf - Ciem (CONICET), Medina Allende y
Haya de la Torre, Ciudad Universitaria, (5000) C\'ordoba - \indent
 Argentina. e-mail:
cboyallian@unc.edu.ar}\newline
\indent {\textit{Ciem - CONICET, Medina Allende y
Haya de la Torre, Ciudad Universitaria, (5000) C\'ordoba -
Argentina.  \indent e-mail: joseliberati@gmail.com}}}

\date{version 10 Dic, 2024}
\keywords{Lie conformal algebra, Jordan conformal superalgebra, differential Lie supercoalgebra, differential Jordan supercoalgebra}


\maketitle

\begin{abstract}
We classify simple differential Lie and Jordan (super)coalgebras of finite rank. In particular, we provide an explicit description of the Lie supercoalgebras associated with the operator product expansion (OPE) of the
$
N=2,3,4$ superconformal Lie algebras and the exceptional Lie conformal superalgebra CK$_6$.
\end{abstract}


\section{Introduction}\lbb{intro}

Finite simple Lie conformal algebras were classified in \cite{DK}, and all their finite irreducible representations were constructed in \cite{CK}. According to \cite{DK}, any finite simple Lie conformal algebra is isomorphic either to the current Lie conformal algebra \textit{Cur} $\fg$, where $\fg$ is a simple finite-dimensional Lie algebra, or to the Virasoro conformal algebra.

However, the list of finite simple Lie conformal superalgebras is much richer, mainly due to the existence of several series of super extensions of the Virasoro conformal algebra. The complete classification of finite simple Lie conformal superalgebras was obtained in \cite{FK}. The list consists of current Lie conformal superalgebras \textit{Cur} $\fg$, where $\fg$ is a simple finite-dimensional Lie superalgebra; four series of "Virasoro-like" Lie conformal superalgebras $W_n$ ($n \geq 0$), $S_{n,b}$ and $\tilde{S}_n$ ($n \geq 2, b \in \CC$), $K_n$ ($n \geq 0$); and the exceptional Lie conformal superalgebra $CK_6$.

In \cite{L}, one of the authors introduced conformal Lie coalgebras and conformal Lie bialgebras.
 The notion of  conformal Lie coalgebra is referred to as a differential Lie coalgebra in this work. Using the results in \cite{L}, we prove a correspondence between  the category of finite free  differential Lie (super)coalgebras and  the category of finite free  Lie conformal (super)algebras (Section 2). Since simple finite Lie conformal (super)algebras correspond to simple finite differential Lie (super)coalgebras, the classification of finite simple Lie conformal algebras (resp. superalgebras) obtained by D'Andrea and Kac \cite{DK} (resp. Fattori and Kac \cite{FK}) yields the classification of simple differential Lie (super)coalgebras of finite rank (Section 3).

Since Lie conformal algebras encode the OPE, we are thereby obtaining the Lie coalgebra associated with the OPE. In particular, we present the Lie supercoalgebras associated with the OPE of the $N=2, 3, 4$ superconformal Lie algebras and $CK_6$ (Sections 4.6 and 4.5).

One of the main results of the present paper is the explicit description of the simple differential Lie (super)coalgebras of finite rank (Section 4).

Similarly, the simple finite Jordan conformal superalgebras were classified by Kac and Retakh \cite{KR}. Using the correspondence of categories that also holds for Jordan conformal superalgebras, we obtain the classification of simple finite differential Jordan supercoalgebras, along with their explicit description (Section 5).

\section{Lie conformal superalgebras and differential Lie supercoalgebras}\lbb{Lie}

In this section we present the definitions of Lie conformal superalgebras and differential Lie supercoalgebra, and we establish a correspondence of categories between the finite free objects in each category.

\begin{definition} A {\it   Lie conformal superalgebra} $R$ is  a left
$\ZZ/2\ZZ$-graded $\cp$-module endowed with a $\CC$-linear map,
\begin{displaymath}
R\otimes R  \longrightarrow \CC[\la]\otimes R, \qquad a\otimes b
\mapsto [a_\la b]
\end{displaymath}
called the $\la$-bracket satisfying the following axioms
$(a,\, b,\, c\in R)$,

\

\noindent Conformal sesquilinearity $ \qquad  [\pa a_\la b]=-\la
[a_\la b],\qquad [a_\la \pa b]=(\la+\pa) [a_\la b]$,

\vskip .3cm

\noindent Skew-symmetry $\ \qquad\qquad\qquad [a_\la
b]=-(-1)^{p(a)p(b)}[b_{-\la-\pa} \ a]$,

\vskip .3cm

\noindent Jacobi identity $\quad\qquad\qquad\qquad [a_\la [b_\mu
c]]=[[a_\la b]_{\la+\mu} c] + (-1)^{p(a)p(b)}[b_\mu [a_\la c]]. $

\vskip .5cm

Here and further $p(a)\in \ZZ/2\ZZ$ is the parity of $a$.
\end{definition}

A Lie conformal superalgebra is called {\it finite} if it has finite
rank as a $\CC[\pa]$-module, and it is called {\it free} if it is free 
 as a $\CC[\pa]$-module. The notions of homomorphism, ideal
and subalgebras of a Lie conformal superalgebra are defined in the
usual way. A Lie conformal superalgebra $R$ is {\it simple} if $[R_\la
R]\neq 0$ and contains no ideals except for zero and itself.

We define the
 {\it conformal dual} of a $\cp$-module $U$ as
\begin{equation}\label{eq:dual}
U^{\c}=\{f:U\to \CC[\la]\ | \ \CC\hbox{-linear and } f_{\la}(\pa
b)= \la f_{\la}(b)\}.
\end{equation}
It is a $\cp$-module with
\begin{equation}\label{eq:p}
(\pa f)_{\la}:= -\la f_{\la}.
\end{equation}
Given a homomorphism of $\cp$-modules $T:M\to N$, we define the transpose homomorphism $T^*:N^{\c} \to M^{\c}$ given by
\begin{equation*}
  [\, T^*(f)\, ]_\la (m)= f_\la (\,T(m)).
\end{equation*}

\vskip .1cm

We also define the tensor
 product $U\otimes V$  of $\cp$-modules as the ordinary tensor product with
 $\cp$-module structure $(u\in U, v\in V)$:
$$
\pa(u\otimes v)\,=\, \pa u\otimes v + u\otimes \pa v.
$$

\begin{definition} A {\it differential Lie supercoalgebra} $(C,\delta)$ is a $\ZZ/2\ZZ$-graded $\CC[\partial]$-module $C$ endowed with a $\CC[\partial]$-homomorphism
\begin{displaymath}
\delta:C\to C\tt C
\end{displaymath}
such that
\begin{equation*}
  \mathrm{Im}(\de)\subseteq \hbox{span}\{ a\tt b - (-1)^{p(a) p(b)} b\tt a\, :\, a,b\in C\},
\end{equation*}
\noindent and
\begin{displaymath}
(I\otimes \delta) \delta - (\tau \tt I) (I\otimes \delta) \delta=
(\delta\otimes I) \delta,
\end{displaymath}
\vskip .3cm

\noindent
where $\tau(a\otimes b )=(-1)^{p(a) p(b)}\, b\otimes a$.
\end{definition}

\noindent That is, the standard definition of a Lie supercoalgebra, with a compatible
$\CC[\partial]$-structure. A differential Lie supercoalgebra is called {\it finite} if it has finite
rank as a $\CC[\pa]$-module, and it is called {\it free} if it is free 
 as a $\CC[\pa]$-module. The notions of homomorphism 
and subsupercoalgebras of a differential Lie supercoalgebra are defined in the
usual way. For example, a $\cp$-submodule $C_0$ of $C$ is a {\it subsupercoalgebra}, if $\de(C_0)\subseteq C_0\tt C_0$. A differential Lie supercoalgebra $(C,\delta)$ is {\it simple} if $\delta\neq 0$ and contains no subsupercoalgebras except for zero and itself.

We shall need the following result, that follows from Proposition 2.7 in \cite{BKLR}:

\begin{proposition} \label{varphi-}
  \cite{BKLR} Let $V$ be a finite free $\cp$-module, we define  $\varphi:V\to (V^{\c})^\c$ by
\begin{equation*}
  [\,\varphi(v)\,]_\mu (f):=f_{-\mu} (v),
\end{equation*}
for any $v\in V$, $f\in V^\c$. Then, $\varphi$  is an isomorphism of $\cp$-modules.
\end{proposition}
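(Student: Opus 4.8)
The plan is to reduce the statement to an explicit computation with a $\cp$-basis of $V$. The point to keep in mind throughout is that this cannot be settled by a rank count: an injective $\cp$-linear map between finite free modules of the same rank need not be surjective (for instance, multiplication by $\partial$ on $\cp$), so the double-dual map $\varphi$ has to be inverted by hand.

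First I would dispose of the routine verifications that $\varphi$ is a well-defined $\cp$-homomorphism. For $v\in V$ and $f\in V^\c$ the quantity $f_{-\mu}(v)$ is simply the polynomial $f_\lambda(v)\in\CC[\lambda]$ evaluated at $\lambda=-\mu$, so it lies in $\CC[\mu]$ and is $\CC$-linear in $f$; the identity $(\partial f)_\lambda=-\lambda f_\lambda$ gives the conformal sesquilinearity $[\varphi(v)]_\mu(\partial f)=\mu\,[\varphi(v)]_\mu(f)$, so $\varphi(v)$ does lie in $(V^\c)^\c$, and the sesquilinearity $f_\lambda(\partial v)=\lambda f_\lambda(v)$ of the elements of $V^\c$ gives $\varphi(\partial v)=\partial\varphi(v)$.

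The substantive ingredient is a concrete description of $V^\c$. Fixing a $\cp$-basis $v_1,\dots,v_n$ of $V$, I would introduce the dual functionals $f^{(i)}\in V^\c$ determined by $f^{(i)}_\lambda(v_j)=\delta_{ij}$, whose existence uses freeness of $V$ and whose uniqueness is forced by conformal sesquilinearity. I would then check that $f^{(1)},\dots,f^{(n)}$ is a $\cp$-basis of $V^\c$: any $g\in V^\c$ equals $\sum_j q_j(\partial)f^{(j)}$ with $q_j(\lambda)=g_{-\lambda}(v_j)\in\CC[\lambda]$, as one sees by evaluating both sides at each $v_i$, and $\cp$-linear independence follows by the same evaluation. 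In particular $V^\c$ is again finite free of rank $n$. Injectivity of $\varphi$ is then immediate: $\varphi(v)=0$ forces $f_\lambda(v)=0$ for all $f\in V^\c$, and testing against $f^{(j)}$ reads off each coefficient of $v$ in the basis $v_1,\dots,v_n$.

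The \emph{main obstacle} is surjectivity, and here the explicit dual basis does the work. Given $\Phi\in(V^\c)^\c$, I would put $p_i(\lambda):=[\Phi]_{-\lambda}(f^{(i)})\in\CC[\lambda]$ and propose $v:=\sum_i p_i(\partial)v_i\in V$ as its preimage. A direct computation gives $[\varphi(v)]_\mu(f^{(j)})=\sum_i p_i(-\mu)\,\delta_{ij}=p_j(-\mu)=[\Phi]_\mu(f^{(j)})$, so $\varphi(v)$ and $\Phi$ agree on $f^{(1)},\dots,f^{(n)}$; since both maps are $\CC$-linear and conformal, and every element of $V^\c$ is a $\cp$-linear combination of the $f^{(j)}$, they agree on all of $V^\c$, so $\varphi(v)=\Phi$. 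Together with injectivity this shows $\varphi$ is a bijective $\cp$-homomorphism, hence an isomorphism. (Alternatively, as the excerpt notes, this is a special case of Proposition~2.7 in \cite{BKLR}; the computation above makes the argument self-contained.)
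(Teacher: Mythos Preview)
Your argument is correct. The verifications that $\varphi$ is well-defined and $\cp$-linear are accurate, the construction of the dual basis $f^{(1)},\dots,f^{(n)}$ of $V^{\c}$ is sound (in particular your formula $q_j(\lambda)=g_{-\lambda}(v_j)$ for the coefficients of an arbitrary $g\in V^{\c}$ works out), and your explicit preimage $v=\sum_i p_i(\partial)v_i$ with $p_i(\lambda)=[\Phi]_{-\lambda}(f^{(i)})$ correctly inverts $\varphi$. The remark that equality on the $f^{(j)}$ forces equality on all of $V^{\c}$ is justified by the conformal sesquilinearity $[\Psi]_\mu(q(\partial)f)=q(\mu)[\Psi]_\mu(f)$, which you invoke.

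As for comparison: the paper does not supply its own proof of this proposition; it simply records the result as a consequence of Proposition~2.7 in \cite{BKLR}. Your write-up is therefore not an alternative route but a self-contained substitute for the citation, which is exactly what you flag in your closing parenthetical.
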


The following results were obtained in \cite{L}, except for $(c)$ and $(d)$.

\begin{proposition}\label{prop:phi} {\rm (Proposition 2.12, \cite{L})} Let $R$ be a finite free $\cp$-module. Let $\Phi: R^{*c}\otimes R^{*c}\to \CC[\mu]\otimes(R\otimes R)^{*c}$ given by
\begin{equation}\label{eq:0}
\left[\Phi_\mu (f\otimes g)\right]_\lambda (r\otimes r')=
f_\mu(r)\  g_{\lambda -\mu} (r').
\end{equation}
Then we have:

(a) $\Phi_\mu(\p f\otimes g) =-\mu \Phi_\mu (f\otimes g)$ and
$\Phi_\mu( f\otimes \p g) =(\p +\mu) \Phi_\mu (f\otimes g)$

(b) $\Phi$ is a homomorphism of $\CC[\p]$-modules.

(c) $\Phi$ is injective.

(d) If $f:R\to S$ is a homomorphism of $\CC[\p]$-modules, then  $\Phi_{\mu}\circ (f^*\tt f^*)  = (f\tt f)^* \circ \Phi_{\mu}$.
\end{proposition}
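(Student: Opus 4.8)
The plan is to verify $(a)$--$(d)$ by unwinding the definitions of the conformal dual \eqref{eq:dual}, of its $\p$-action \eqref{eq:p}, and of the tensor product of $\cp$-modules. Before doing so I would check that $\Phi$ is well defined: for fixed $f,g\in R^{*c}$ and a fixed value of $\mu$, the assignment $r\tt r'\mapsto f_\mu(r)\,g_{\lambda-\mu}(r')$ is $\CC$-bilinear, so it factors through $R\tt R$, and applying conformal sesquilinearity to $f$ and to $g$ separately gives $[\Phi_\mu(f\tt g)]_\lambda(\p r\tt r'+r\tt\p r')=\bigl(\mu+(\lambda-\mu)\bigr)f_\mu(r)\,g_{\lambda-\mu}(r')=\lambda\,[\Phi_\mu(f\tt g)]_\lambda(r\tt r')$, so that $\Phi_\mu(f\tt g)\in(R\tt R)^{*c}$. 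Since the right-hand side of \eqref{eq:0} is likewise $\CC$-bilinear in $(f,g)$, $\Phi_\mu$ descends to a well-defined $\CC$-linear map on $R^{*c}\tt R^{*c}$.

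Parts $(a)$ and $(b)$ (already the content of Proposition 2.12 of \cite{L}) follow by evaluation on $r\tt r'$: the identity $(\p f)_\mu=-\mu f_\mu$ gives the first equation of $(a)$, and the identity $(\p g)_{\lambda-\mu}=-(\lambda-\mu)g_{\lambda-\mu}$ together with the action $(\p h)_\lambda=-\lambda h_\lambda$ on $(R\tt R)^{*c}$ identifies $[\Phi_\mu(f\tt\p g)]_\lambda(r\tt r')$ with $[(\p+\mu)\Phi_\mu(f\tt g)]_\lambda(r\tt r')$; then $(b)$ is immediate from $(a)$ via $\p(f\tt g)=\p f\tt g+f\tt\p g$, the two $\mu$-contributions cancelling.

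For the new statement $(c)$ I would fix a $\cp$-basis $e_1,\dots,e_n$ of $R$ and take the dual basis $e^1,\dots,e^n$ of $R^{*c}$, determined by $e^i_\lambda(\p^k e_j)=\lambda^k\delta_{ij}$; one verifies that each $e^i$ lies in $R^{*c}$, that $(\p^k e^i)_\lambda=(-\lambda)^k e^i_\lambda$, and that $\{e^i\}$ is indeed a $\cp$-basis of $R^{*c}$, so that $\{\p^a e^i\tt\p^b e^j:a,b\ge 0,\ 1\le i,j\le n\}$ is a $\CC$-basis of $R^{*c}\tt R^{*c}$. Writing an arbitrary element as a finite sum $F=\sum_{a,b,i,j}c_{a,b,i,j}\,\p^a e^i\tt\p^b e^j$ and evaluating $\Phi_\mu(F)$ on $e_k\tt e_m$ yields, by \eqref{eq:0}, $[\Phi_\mu(F)]_\lambda(e_k\tt e_m)=\sum_{a,b}(-1)^{a+b}c_{a,b,k,m}\,\mu^a(\lambda-\mu)^b$. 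Since $\mu$ and $\lambda-\mu$ are algebraically independent, $\Phi_\mu(F)=0$ forces every $c_{a,b,k,m}=0$, hence $F=0$.

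For $(d)$ I would first note that $f\tt f\colon R\tt R\to S\tt S$ is a $\cp$-homomorphism, so that $(f\tt f)^{*}$ is defined, and then evaluate both composites at $\phi\tt\psi\in S^{*c}\tt S^{*c}$ and at $r\tt r'$: using the definitions of $f^{*}$, of $(f\tt f)^{*}$ and \eqref{eq:0}, both sides reduce to $\phi_\mu(f(r))\,\psi_{\lambda-\mu}(f(r'))$, which is the claimed identity. I do not expect a genuine obstacle here: all four assertions are formal manipulations with the evaluation formula \eqref{eq:0}. The only step that is not purely mechanical is $(c)$, where one must treat $\mu$ and $\lambda$ as independent formal parameters in order to conclude that the displayed polynomial vanishes identically, and must keep careful track of the signs produced by the rule $(\p f)_\lambda=-\lambda f_\lambda$.
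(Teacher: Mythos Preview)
Your argument is correct. For $(a)$, $(b)$ and $(d)$ you proceed exactly as the paper does (the paper cites \cite{L} for $(a)$--$(b)$ and gives the same direct evaluation for $(d)$). For $(c)$ you take a genuinely different route: the paper argues by contradiction in a coordinate-free way, assuming $\Phi_\mu\bigl(\sum_i f_i\tt g_i\bigr)=0$ with the $f_i$ taken $\CC$-linearly independent, picking $b\in R$ on which some $g_j$ does not vanish, and then evaluating at $a\tt b$ to force all coefficients of $(g_i)_y(b)$ to vanish, a contradiction. Your approach instead exploits the freeness hypothesis directly, choosing a $\cp$-basis and its dual basis so that evaluation on $e_k\tt e_m$ produces the polynomial $\sum_{a,b}(-1)^{a+b}c_{a,b,k,m}\,\mu^a(\lambda-\mu)^b$, from which the vanishing of all coefficients follows by the algebraic independence of $\mu$ and $\lambda-\mu$. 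Your method is a bit more concrete and makes the role of the free basis transparent; the paper's method has the mild advantage of not invoking a basis at all and in fact goes through for an arbitrary $\cp$-module $R$.
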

\begin{proof} (c) Let $0\neq f_i,g_i\in R^\c$ be such that $0=\Phi_\mu (\sum_{i=1}^n f_i\tt g_i)$. We may suppose that $\{f_i\}$ are $\CC$-linearly independent.
Since $g_i\neq 0$ for all $i$, there exists $b\in R$ such that $0 \neq (g_j)_y(b)$ for some $j$. If for each $i$, $(g_i)_y(b)=\sum_k \alpha^i_k y^k$, with $\alpha^i_k\in \mathbb{C}$, then we have (for all $a\in R$)
\begin{align*}
  0 &  = [\Phi_\mu (\textstyle{\sum_i} (f_i\tt g_i))]_{\la+\mu}(a\tt b)= \textstyle{\sum_i} (f_i)_\mu(a) \ (g_i)_\la (b)=\textstyle{\sum_i} (f_i)_\mu(a) (\textstyle{\sum_k} \alpha^i_k \la^k) \\
& =\textstyle{\sum_k} \big(\textstyle{\sum_i}(f_i)_\mu(a)  \alpha^i_k \big)\la^k.
\end{align*}
Therefore $0=\textstyle{\sum_i}(\alpha^i_k \,f_i)_\mu(a) $ for all  $a\in R$ and all $k$. Then $0=\textstyle{\sum_i} \alpha^i_k \,f_i$, obtaining that $\al_k^i=0$ for all $i,k$. In particular $(g_j)_y(b)=0$, which is a contradiction.

\vskip .1cm

\noindent (d) For any $h,g\in S^\c$, $r,s\in R$, we have
\begin{align*}
  \big([\Phi_{\mu}\circ (f^*\tt f^*) ] (h\tt g)\big)_\la (r\tt s)
& = [\Phi_{\mu} (f^*(h)\tt f^*(g)) ]_\la (r\tt s)
= f^*(h)_\mu (r) \ f^*(g)_{\la-\mu}(s)\\
   & = h_\mu (f(r)) \ g_{\la-\mu}(f(s))
= [\Phi_{\mu} (h\tt g) ]_\la ((f\tt f)(r\tt s))\\
& = \big([(f\tt f)^* \circ \Phi_{\mu} ] (h\tt g)\big)_\la (r\tt s),
\end{align*}
finishing the proof.
\end{proof}

%
%
%
%
The next result was obtained in \cite{L} for Lie conformal algebras. The proof for superalgebras is similar.

\begin{theorem}
\label{prop:dual}  {\rm (Proposition 2.13, \cite{L})}
(a) Let $(C, \delta)$ be a differential Lie (super)coalgebra, then $C^{*c}$ is a Lie conformal (super)algebra with the following bracket $(f,g\in C^\c)$:
\begin{equation}\label{eq:2}
([f_\mu g])_\lambda (r)= \sum \, f_\mu(r_{(1)}) \ g_{\lambda -\mu}
(r_{(2)}) = [\Phi_\mu (f\otimes g)]_{\la} (\de(r)),
\end{equation}
where $\delta(r)=\sum \, r_{(1)}\otimes r_{(2)}$.

\vskip .2cm

\noindent (b) Let $(R, [\  _\la \  ])$ be a Lie conformal (super)algebra free 
  of finite rank, that is $R=\oplus_{i=1}^n \cp a_i$, then
$R^{*c}=\oplus_{i=1}^n \cp a_i^*$, where $\{a_i^*\}$ is a
dual $\cp$-basis in the sense that $(a_i^*)_\la(a_j)=\tilde\delta_{ij}$ (where $\tilde\delta_{i,j}$ is the Kronecker delta),
is a differential Lie (super)coalgebra with the following co-product:
\begin{equation} \label{eq:De}
\de (f)= \sum_{i,j} \, f_\mu([{a_i}_\la a_j]) \ (a_i^*\otimes a_j^*)|_{
\la=\p\otimes 1,\ \mu= -\p\otimes 1 - 1\otimes \p }
\end{equation}
More precisely, if
\begin{displaymath}
[{a_i}_\la a_j]=\sum_k P^{ij}_k (\la, \p) a_k,
\end{displaymath}
where $P^{ij}_k$ are some polynomials in $\la$ and $\p$,  then the
co-product is
\begin{displaymath}
\de(a_k^*)=\sum_{i,j} Q^{ij}_k(\p\otimes 1, 1\otimes \p)\ a_i^*\otimes
a_j^*.
\end{displaymath}
where $Q^{ij}_k(x,y)=P^{ij}_k(x, -x-y)$.
\end{theorem}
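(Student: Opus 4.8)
The plan is to verify, in part~(a), the three axioms of a Lie conformal superalgebra for the $\mu$-bracket on $C^\c$ defined by~\eqref{eq:2}, and in part~(b), the three axioms of a differential Lie supercoalgebra for the co-product $\delta$ defined by~\eqref{eq:De}. In both parts the two quadratic axioms are short computations with the conformal-dual structure, while the remaining cubic axiom --- the Jacobi identity in~(a), the co-Jacobi identity in~(b) --- is obtained from the other one by pairing against triples of functionals, using the auxiliary maps $\Phi$ and $\varphi$ of \prref{prop:phi} and \prref{varphi-}. This is the super-analogue of the Lie-conformal-algebra statement in~\cite{L}, and the only genuinely new ingredient is keeping track of the Koszul signs.

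For part~(a), fix $f,g\in C^\c$ and $r\in C$, and write $\delta(r)=\sum r_{(1)}\tt r_{(2)}$ (a finite sum), so that $([f_\mu g])_\la(r)=\sum f_\mu(r_{(1)})\,g_{\la-\mu}(r_{(2)})$ is a polynomial in $\mu$; the relation $([f_\mu g])_\la(\p r)=\la\,([f_\mu g])_\la(r)$, which is needed for $[f_\mu g]\in\CC[\mu]\tt C^\c$, follows from $\delta$ being a $\cp$-homomorphism together with $\Phi_\mu(f\tt g)\in(C\tt C)^\c$ (the super version of \prref{prop:phi}(a)--(b), whose proof does not use finiteness). Conformal sesquilinearity is immediate from~\eqref{eq:p}: for instance $([\p f_\mu g])_\la(r)=-\mu\,([f_\mu g])_\la(r)$ and $[f_\mu\,\p g]=(\mu+\p)[f_\mu g]$. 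For skew-symmetry one has to watch a sign: the hypothesis $\mathrm{Im}(\delta)\subseteq\mathrm{span}\{a\tt b-(-1)^{p(a)p(b)}b\tt a\}$ is equivalent to $\tau\delta=-\delta$, and substituting this into~\eqref{eq:2}, using that $f_\mu(r_{(2)})\,g_{\la-\mu}(r_{(1)})$ vanishes unless $p(r_{(2)})=p(f)$ and $p(r_{(1)})=p(g)$ (so $(-1)^{p(r_{(1)})p(r_{(2)})}$ may be replaced by $(-1)^{p(f)p(g)}$), together with the elementary identity $([g_{-\mu-\p}f])_\la(r)=([g_{\la-\mu}\,f])_\la(r)$ (a consequence of~\eqref{eq:p}), gives $[f_\mu g]=-(-1)^{p(f)p(g)}[g_{-\mu-\p}f]$.

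The heart of part~(a) is the Jacobi identity. Fixing $r\in C$ and a spectral parameter $\nu$, I will apply~\eqref{eq:2} twice, together with the $\cp$-linearity of $\delta$, to rewrite each of
\[
([f_\la[g_\mu h]])_\nu(r),\qquad ([[f_\la g]_{\la+\mu}h])_\nu(r),\qquad (-1)^{p(f)p(g)}([g_\mu[f_\la h]])_\nu(r)
\]
as the value of the functional $f_\la\tt g_\mu\tt h_{\nu-\la-\mu}$ on, respectively, $(I\tt\delta)\delta(r)$, on $(\delta\tt I)\delta(r)$, and on $(\tau\tt I)(I\tt\delta)\delta(r)$; the factor $(-1)^{p(f)p(g)}$ in the third term is exactly the Koszul sign produced by commuting $f\tt g\tt h$ past $\tau$, once again using the parity constraints. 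The co-Jacobi identity of $\delta$ then yields the Jacobi identity for $C^\c$. I expect this to be the main obstacle --- not conceptually, but as bookkeeping: one has to carry the substitutions of $\la,\mu,\nu$ correctly through the iterated co-products and keep every super sign aligned so that the three pieces assemble into the three terms of the co-Jacobi identity.

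For part~(b), since $R=\oplus_{i=1}^n\cp a_i$ (with the $a_i$ chosen homogeneous) is free of finite rank, we have $R^\c=\oplus_{i=1}^n\cp a_i^*$ with $(a_i^*)_\la(a_j)=\tilde\delta_{ij}$ (easily checked directly). Evaluating~\eqref{eq:De} on $f=a_k^*$ and using $(a_k^*)_\mu(\p a_l)=\mu\,(a_k^*)_\mu(a_l)$ gives at once $\delta(a_k^*)=\sum_{i,j}P^{ij}_k(\p\tt 1,\,-\p\tt 1-1\tt\p)\,a_i^*\tt a_j^*$, that is, the stated formula with $Q^{ij}_k(x,y)=P^{ij}_k(x,-x-y)$. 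That $\delta$ is a $\cp$-homomorphism follows from~\eqref{eq:p} together with the observation that the substitution $\mu=-\p\tt 1-1\tt\p$ sends $-\mu$ to $\p\tt 1+1\tt\p$, the $\cp$-action on $C\tt C$. The inclusion $\mathrm{Im}(\delta)\subseteq\mathrm{span}\{a\tt b-(-1)^{p(a)p(b)}b\tt a\}$ is obtained by transposing the skew-symmetry of the $\la$-bracket of $R$, i.e.\ by running the skew-symmetry computation of part~(a) in reverse. Finally, for the co-Jacobi identity --- an identity of $\cp$-maps $R^\c\to(R^\c)^{\tt 3}$ --- I will pair both sides against $\varphi(a)\tt\varphi(b)\tt\varphi(c)$ for $a,b,c\in R$, where $\varphi$ is the isomorphism of \prref{varphi-}; using $[\varphi(a)]_\mu(f)=f_{-\mu}(a)$ and the formula for $\delta$, the resulting scalar identity is precisely the Jacobi identity of $R$ evaluated at $a,b,c$. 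Since $R^\c$ is again free of finite rank, expanding an element of $(R^\c)^{\tt 3}$ in the basis $\{a_i^*\tt a_j^*\tt a_k^*\}$ shows that pairing against the $\varphi(a_i)\tt\varphi(a_j)\tt\varphi(a_k)$ is non-degenerate, so the validity of the scalar identity for all $a,b,c$ forces the co-Jacobi identity for $\delta$, completing the proof.
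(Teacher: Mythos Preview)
Your proposal is correct and complete. The paper itself does not give a proof of this theorem: it simply cites \cite{L} for the Lie case and remarks that ``the proof for superalgebras is similar.'' Your argument is exactly that super-analogue --- you verify sesquilinearity and skew-symmetry directly and then deduce Jacobi from co-Jacobi (and conversely in part~(b)) by pairing, with the Koszul signs tracked via the parity constraints $p(r_{(i)})=p(f)$, etc. This is the expected approach and matches the strategy of \cite{L}; the only point you could make slightly more explicit is the non-degeneracy statement at the end of part~(b), which amounts to the three-factor analogue of the injectivity of $\Phi_\mu$ in \prref{prop:phi}(c), but as you note this is immediate once one expands in the free basis $\{a_i^*\otimes a_j^*\otimes a_k^*\}$.
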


\

Now, using   Theorem \ref{prop:dual}, we have:

\begin{theorem}\label{equival}
  The construction in Theorem \ref{prop:dual} (a), defines a contravariant  functor $F$ from  the category of finite free differential Lie (super)coalgebras  
to the category of finite free Lie conformal (super)algebras. Conversely, the construction in Theorem \ref{prop:dual} (b), defines a contravariant  functor $G$ from  the category of finite free Lie conformal (super)algebras 
to the category of finite free differential Lie (super)coalgebras. Moreover, we have that $G\circ F\cong 1_\mathcal{C}$, where $\mathcal{C}$ is the category of finite free differential Lie (super)coalgebras, and we also have that $F\circ G\cong 1_\mathcal{L}$, where $\mathcal{L}$ is the category of finite free Lie conformal (super)algebras.
\end{theorem}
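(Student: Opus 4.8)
The plan is to establish the two functors are well-defined on morphisms, then exhibit natural isomorphisms using the identifications already provided in the excerpt. I would proceed in the following steps.

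\textbf{Step 1: $F$ is a functor.} On objects, $F(C,\delta) = (C^{*c}, [\ _\la\ ])$ as constructed in \thref{prop:dual}(a). On a morphism $h:(C,\delta)\to(C',\delta')$ of differential Lie supercoalgebras — i.e.\ a $\cp$-homomorphism with $(h\tt h)\circ\delta = \delta'\circ h$ — set $F(h) := h^*: (C')^{*c}\to C^{*c}$. One must check $h^*$ is a homomorphism of Lie conformal superalgebras, i.e.\ $h^*([f_\mu g]) = [h^*(f)_\mu h^*(g)]$ for $f,g\in(C')^{*c}$. Using formula \eqref{eq:2} this reads $[\Phi_\mu(f\tt g)]_\la(\delta'(h(r))) = [\Phi_\mu(h^*(f)\tt h^*(g))]_\la(\delta(r))$; rewriting the right side via \prref{prop:phi}(d) as $[(h\tt h)^*\Phi_\mu(f\tt g)]_\la(\delta(r)) = [\Phi_\mu(f\tt g)]_\la((h\tt h)\delta(r))$, the two sides agree precisely because $(h\tt h)\delta = \delta' h$. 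Functoriality ($F(\mathrm{id})=\mathrm{id}$, $F(h_2 h_1) = F(h_1)F(h_2)$, contravariant) is immediate from the corresponding properties of $(-)^*$.

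\textbf{Step 2: $G$ is a functor.} On objects, $G(R,[\ _\la\ ]) = (R^{*c},\delta)$ as in \thref{prop:dual}(b), where one must fix a free $\cp$-basis $\{a_i\}$ to define $\delta$ on the dual basis $\{a_i^*\}$; a short remark should note that $\delta$ is basis-independent (or alternatively that any two choices produce isomorphic coalgebras via the identity map, which suffices). On a morphism $\psi:R\to R'$ of Lie conformal superalgebras, set $G(\psi) := \psi^*:(R')^{*c}\to R^{*c}$; one checks $\psi^*$ respects the co-products, $\delta_{R^{*c}}\circ\psi^* = (\psi^*\tt\psi^*)\circ\delta_{(R')^{*c}}$. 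The cleanest route is to dualize: by \prref{prop:phi} and \prref{prop:phi}(c)/(d), the co-product $\delta$ on $R^{*c}$ is characterized by $[\Phi_\mu(f\tt g)]_\la(\delta(r)) = ([f_\mu g])_\la(r)$ for all $f,g\in R^{*c}$ (this is exactly \eqref{eq:2}, and by injectivity of $\Phi$ together with the pairing being nondegenerate on a finite free module, it determines $\delta$). Then the morphism identity for $\psi^*$ follows by pairing against arbitrary $f,g$ and using \prref{prop:phi}(d) and the fact that $\psi$ is a conformal homomorphism, mirroring Step~1.

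\textbf{Step 3: The natural isomorphisms.} For $G\circ F\cong 1_\mathcal{C}$: given $(C,\delta)$, \prref{varphi-} gives a $\cp$-isomorphism $\varphi:C\to(C^{*c})^{*c}$, and I would verify it is a morphism of differential Lie supercoalgebras from $(C,\delta)$ to $G(F(C,\delta))$ — i.e.\ it intertwines $\delta$ with the co-product on $(C^{*c})^{*c}$ built from the bracket on $C^{*c}$. This is a diagram chase: unwind the co-product on $(C^{*c})^{*c}$ via the characterization from Step~2, pair against elements of $C^{*c}$ using $[\varphi(v)]_\mu(f) = f_{-\mu}(v)$, and reduce to \eqref{eq:2} for $C^{*c}$. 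Naturality in $C$ follows since $\varphi$ is natural (it is defined by a uniform formula, and $(h^*)^* \circ \varphi_C = \varphi_{C'}\circ h$ for any $\cp$-map $h$, which is a one-line check). For $F\circ G\cong 1_\mathcal{L}$: given $R = \oplus\cp a_i$, the dual-basis construction gives $R^{*c} = \oplus\cp a_i^*$ and then $(R^{*c})^{*c} = \oplus\cp (a_i^*)^*$; the claim is that the canonical map $R\to(R^{*c})^{*c}$ sending $a_i\mapsto(a_i^*)^*$ (equivalently $\varphi$, up to sign bookkeeping in $\mu\mapsto-\mu$) is a conformal-algebra isomorphism, which one verifies by computing the bracket on $(R^{*c})^{*c}$ from the co-product \eqref{eq:De} on $R^{*c}$ and matching it termwise with $[{a_i}_\la a_j] = \sum_k P^{ij}_k(\la,\p)a_k$ via the substitution $Q^{ij}_k(x,y) = P^{ij}_k(x,-x-y)$ applied twice (which returns $P$). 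Naturality is again a formal consequence of the uniform definition.

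\textbf{Main obstacle.} The routine functoriality in Steps~1--2 is genuinely easy once \prref{prop:phi}(d) and the characterization of $\delta$ by \eqref{eq:2} are in hand. The real work is Step~3, specifically checking that $\varphi$ (resp.\ its analogue on conformal algebras) is compatible with the \emph{coalgebra/algebra structure} and not merely the $\cp$-module structure; this requires carefully tracking the sign conventions and the sign change $\mu\mapsto-\mu$ hidden in $\varphi$ and in skew-symmetry, and confirming that the double substitution $P \mapsto Q \mapsto$ ``$Q$ of $Q$'' is the identity (which it is, since $(x,y)\mapsto(x,-x-y)$ is an involution). I expect the $F\circ G$ direction to be the more delicate one because it involves the polynomial bookkeeping of \eqref{eq:De}, whereas the $G\circ F$ direction can be done abstractly via pairings.
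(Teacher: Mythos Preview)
Your proposal is essentially correct and follows the same architecture as the paper's proof: define $F$ and $G$ on morphisms via the transpose $(-)^*$, check compatibility using \prref{prop:phi}(d), and use the canonical map $\varphi$ of \prref{varphi-} as the component of both natural isomorphisms. One point needs tightening: in Step~2 your characterization ``$[\Phi_\mu(f\otimes g)]_\lambda(\delta(r)) = ([f_\mu g])_\lambda(r)$ for all $f,g\in R^{*c}$'' does not type-check as written, since $\delta(r)\in R^{*c}\otimes R^{*c}$ while $\Phi_\mu(f\otimes g)\in (R\otimes R)^{*c}$, and the bracket $[f_\mu g]$ on the right is not yet defined. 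The paper resolves this by introducing an auxiliary map $\pi_\mu:R^{*c}\to \mathbb{C}[\mu]\otimes(R\otimes R)^{*c}$ via $[\pi_\mu(f)]_\lambda(a\otimes b)=f_\lambda([a_\mu b])$, and characterizing $\delta$ (basis-free) by $\Phi_{-\mu}\circ\delta=\pi_\mu$; this is the correct dual of \eqref{eq:2}, gives basis-independence for free, and makes the coalgebra-morphism check for $\psi^*$ reduce to $\pi_\mu^1\circ\psi^*=(\psi\otimes\psi)^*\circ\pi_\mu^2$, which follows directly from $\psi$ preserving brackets.

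For $F\circ G\cong 1_{\mathcal L}$ you propose the explicit polynomial route (the substitution $(x,y)\mapsto(x,-x-y)$ is an involution, so applying \eqref{eq:De} twice recovers $P^{ij}_k$). This is valid and arguably more concrete; the paper instead verifies $([\varphi(a)_\mu\varphi(b)]\,\tilde{\,}\,)_\lambda(f)=(\varphi([a_\mu b]))_\lambda(f)$ by an abstract pairing computation using $\Phi_{-\mu}\circ\delta=\pi_\mu$, which avoids coordinates entirely. Either route works; the paper's choice has the advantage of handling both directions of Step~3 with the same bookkeeping-free machinery, while yours makes the involution structure explicit. Your remark that $G\circ F$ is the easier half and can be done by pairing is exactly how the paper proceeds.
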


\begin{proof}
   Given a finite free differential Lie (super)coalgebra $(C,\delta)$, we define    $F(C,\delta):=(C^\c,[\ \,_\mu\ \, ])$, where $[\ \,_\mu\ \, ]:=(\mathrm{id}\tt\delta^* )\circ \Phi_\mu$. By Theorem \ref{prop:dual} (a), we have that $F(C,\delta)$ is a finite free Lie conformal (super)algebra. And, if $h:C_1\to C_2$ is a homomorphism of differential Lie (super)coalgebras, we define $F(h):=h^*:C_2^\c\to C_1^\c$. Observe that $h^*$ is a homomorphism of Lie conformal (super)algebras since ($f,g\in C_2^\c$)
\begin{align*}
  [h^*(f)_\mu h^*(g)]_\la (r)
& = \sum \, (h^*(f))_\mu(r_{(1)}) \ (h^*(g))_{\lambda -\mu}
(r_{(2)}) =
\sum \, f_\mu(h(r_{(1)})) \ g_{\lambda -\mu}
(h(r_{(2)})) \\
& = [\Phi_\mu (f\otimes g)]_{\la} ((h\tt h)\de(r))
= [\Phi_\mu (f\otimes g)]_{\la} (\de(h(r)))
= h^*([f_\mu g])_\la (r).
\end{align*}
It easy to see that $F$ is a contravariant functor.

Let $(R, [\ \,_\mu\ \, ])$ be a finite free Lie conformal (super)algebra. We define $\pi_\mu : R^\c \to \mathbb{C}[[\mu]] \tt (R\tt R)^\c $ by
 ($f\in R^\c$ and $a,b\in R$)
\begin{equation}\label{piii}
  [\, \pi_\mu (f)\,]_\la (a \tt b)=f_\la ([a_\mu b])  \in  \mathbb{C}[\mu,\la].
\end{equation}

\vskip .1cm

\noindent Observe that there exist a unique map $\de:R^\c \to R^\c \tt R^\c$ such that $\Phi_{-\mu} \circ \de =\pi_\mu$, that is, satisfying the following commutative diagram

\[
\begin{tikzcd}
R^\c \arrow[r, "\pi_\mu"] \arrow[dr, dashed, "\delta"'] & \mathbb{C}[\mu]\otimes (R \otimes R)^\c \\
& R^\c \otimes R^\c  \arrow[u, hook, "\Phi_{-\mu}"'] 
\end{tikzcd}
\]

\vskip .2cm
 
\noindent More precisely, we shall prove that $\de$ defined in Theorem \ref{prop:dual} (b) satisfies this relation, and using that $\Phi_\mu$ is injective (see Proposition \ref{prop:phi}), we obtain that $\de$ is unique with this property. Using the notation in Theorem \ref{prop:dual} (b), we have
\begin{equation*}
  [\pi_\mu(a_k^*)]_\la (a_i\tt a_j)=(a_k^*)_\la ([{a_i}_\mu a_j ]) =(a_k^*)_\la \big(\sum_l \, P_l^{i j}(\mu,\p)\, a_l\big)=P_k^{i j}(\mu,\la),
\end{equation*}
and
\begin{align*}
   [\,\Phi_{-\mu} ( \de (a_k^*))\, ]_\la (a_i\tt a_j) & =
\big[\,\Phi_{-\mu} \big(\sum_{l,m} Q^{lm}_k(\p\otimes 1, 1\otimes \p)\ (a_l^*\otimes
a_m^*)\big)\, \big]_\la (a_i\tt a_j)  \\
& = \sum_{l,m} Q^{lm}_k(\mu, -\la -\mu)\ (a_l^*)_{-\mu}(a_i)
\ (a_m^*)_{\la+\mu}(a_j)=Q^{ij}_k(\mu, -\la -\mu),
\end{align*}
therefore $\de$ satisfies $\Phi_{-\mu} \circ \de =\pi_\mu$. Using this, we define $G(R, [\ \,_\mu\ \, ]):=(R^\c , \de)$, and $(R^\c , \de)$ is a finite free differential Lie (super)coalgebra by Theorem \ref{prop:dual} (b). If $f:R_1\to R_2$ is a homomorphism of Lie conformal (super)algebras, we define $G(f):=f^*:R_2^\c\to R_1^\c$. Now we have to prove that  $f^*$ is a homomorphism of differential Lie (super)coalgebras, that is
\begin{equation}\label{fff}
\de_1 \circ f^*=(f^*\tt f^*)\circ \de_2,
\end{equation}
where $\de_i$ is the differential Lie (super)coalgebra structure associated to $R_i^\c$.  Observe that
\begin{equation*}
\Phi_{-\mu}\circ \de_1 \circ f^*=\pi_\mu^1 \circ f^*,
\end{equation*}
and using Proposition \ref{prop:phi} (d), we have
\begin{equation*} 
\Phi_{-\mu}\circ (f^*\tt f^*) \circ  \de_2= (f\tt f)^* \circ \Phi_{-\mu}\circ \de_2
=(f\tt f)^*\circ \pi_\mu^2,
\end{equation*}
where $\pi_\mu^i$ is  associated to the Lie conformal (super)algebra structure in $R_i$. Therefore, since $\Phi_\mu$ is injective, we have that (\ref{fff})
 is equivalent to prove that
$\pi_\mu^1 \circ f^*=(f\tt f)^*\circ \pi_\mu^2$, and this follows by
 ($h\in R_2^\c$, $r,s\in R_1$)
\begin{align*}
 [\pi_\mu^1 ( f^*(h))]_\la (r\tt s) & =
(f^*(h))_\la ([r_\mu s])=h_\la(f([r_\mu s])= h_\la([f(r)_\mu f(s)]) \\
& =
[\, \pi_\mu^2(h)\, ]_\la((f\tt f)(r\tt s))
= [(f\tt f)^*([\, \pi_\mu^2(h)\, ])]_\la(r\tt s).
\end{align*}
It easy to see that $G$ is a contravariant functor.

Recall Proposition \ref{varphi-}: for any  finite free $\cp$-module $V$, the map $\varphi:V\to (V^{\c})^\c$ defined by ($v\in V$, $f\in V^\c$) 
\begin{equation*}
  [\,\varphi(v)\,]_\mu (f):=f_{-\mu} (v),
\end{equation*}
is an isomorphism of $\cp$-modules. Using this map, 
if $(C,\de)$ is a  finite free differential Lie (super)coalgebra, and $G(F(C,\de))=((C^{\c})^\c, \tilde{\de})$, then we have to prove that $\varphi:(C,\de)\to ((C^{\c})^\c, \tilde{\de})$ is an isomorphism of differential Lie (super)coalgebras, that is
$(\varphi\tt \varphi)\circ \de=\tilde\de\circ \varphi$. But, using that $\Phi_\mu$ is injective, this is equivalent to prove that
\begin{equation}\label{isoo}
  \Phi_{-\mu}\circ (\varphi\tt \varphi)\circ \de=\Phi_{-\mu}\circ \tilde\de\circ \varphi = \pi_\mu \circ \varphi,
\end{equation}
where $\pi_\mu$ is the map (\ref{piii}) associated to the  Lie conformal (super)algebra structure in  $F(C,\delta)=(C^\c,[\ \,_\mu\ \, ])$.  Since the RHS in (\ref{isoo}) is given by ($c\in C$, $f,g\in C^\c$)
\begin{equation*}
  ([\pi_\mu \circ \varphi](c))_\la (f\tt g)=(\varphi(c))_\la ([f_\mu g])
=([f_\mu g])_{-\la}(c),
\end{equation*}
and the LHS in (\ref{isoo}) is given by
\begin{align*}
  ([\Phi_{-\mu}\circ (\varphi\tt \varphi)\circ \de](c))_\la (f\tt g)
& = \big[\Phi_{-\mu}  \big(\textstyle{\sum} \, \varphi(c_{(1)})\tt \varphi(c_{(2)})\big)\big]_\la (f\tt g)\\
& = \textstyle{\sum} \,  \ [\varphi(c_{(1)})]_{-\mu}(f)\  [\varphi(c_{(2)})]_{\la+\mu}(g)
=   \textstyle{\sum} \,  \ f_{\mu}(c_{(1)}) \    g_{-\la-\mu}(c_{(2)})\\
& =([f_\mu g])_{-\la}(c),
\end{align*}
we proved that $\varphi$ is an isomorphism of differential Lie (super)coalgebra. It is clear that if $h:C_1\to C_2$ is a homomorphism of differential Lie (super)coalgebras, then we have  $\varphi\circ h= G(F(h))\circ \varphi$. Hence, we obtained that $G\circ F\cong 1_\mathcal{C}$, where $\mathcal{C}$ is the category of finite free differential Lie (super)coalgebras.

Conversely, if $(R, [\ \,_\mu\ \, ])$ be a finite free  Lie conformal (super)algebra, then  $G(R, [\ \,_\mu\ \, ])=(R^\c , \de)$ with $\Phi_{-\mu} \circ \de =\pi_\mu$, and $F(G(R, [\ \,_\mu\ \, ]))=((R^\c)^\c , [\ \,_\mu\ \, ]\, \tilde{\,}\,)$ with $[\ \,_\mu\ \, ]\, \tilde{\,} =(1\tt \de^*)\circ \Phi_\mu$. Now, we  have that $\varphi:(R, [\ \,_\mu\ \, ])  \to ((R^\c)^\c , [\ \,_\mu\ \, ]\, \tilde{\,}\,)$ is an isomorphism of finite free Lie conformal (super)algebras since
\begin{align*}
  ([\varphi(a)_\mu \varphi(b)]\, \tilde{\,}\,)_\la (f)
& =[\Phi_\mu( \varphi(a)\tt \varphi(b))]_\la (\de(f))
  =\textstyle{\sum} \, \big([\varphi(a)]_{\mu}(f_{(1)})\big) \
    \big([\varphi(b)]_{\la-\mu}(f_{(2)})\big)\\
& =\textstyle{\sum} \,  (f_{(1)})_{-\mu}(a) \
    (f_{(2)})_{-\la+\mu}(b) = [\Phi_{-\mu}(\de(f))]_{-\la} (a\tt b)
  = [\pi_{\mu}(f)]_{-\la} (a\tt b)\\
& = f_{-\la}([a_\mu b])=(\varphi([a_\mu b]))_\la (f),
\end{align*}
for any $a,b\in R$, and $f\in R^\c$.  It is clear that if $h:R_1\to R_2$ is a homomorphism of   Lie conformal (super)algebras, then we have
$ \varphi\circ h= F(G(h))\circ \varphi$. Hence, we obtained that $F\circ G\cong 1_\mathcal{L}$, where $\mathcal{L}$ is the category of finite free Lie conformal (super)algebras, finishing the proof.
\end{proof}

\vskip .2cm

\begin{proposition}\label{ideal}
  Using the previous construction, the ideals of a finite free Lie conformal (super)algebra correspond to sub(super)coalgebras of the associated finite free  differential Lie (super)coalgebra. Therefore, simple finite Lie conformal (super)algebras correspond to simple finite differential Lie (super)coalgebras.
\end{proposition}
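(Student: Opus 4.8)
The plan is to realise the correspondence through orthogonal complements. Fix a finite free Lie conformal (super)algebra $R=\bigoplus_{i=1}^{n}\cp a_{i}$ and let $(R^{\c},\de)=G(R)$ be the associated differential Lie (super)coalgebra. To a $\cp$-submodule $I\subseteq R$ attach $I^{\perp}=\{f\in R^{\c}:\,f_{\la}(a)=0\text{ for all }a\in I\}$, and to a $\cp$-submodule $C_{0}\subseteq R^{\c}$ attach $C_{0}^{\perp}=\{v\in R:\,f_{\la}(v)=0\text{ for all }f\in C_{0}\}$, where we identify $R$ with $(R^{\c})^{\c}$ via \prref{varphi-}. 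By \eqref{eq:dual} and \eqref{eq:p} these are again $\cp$-submodules, $(\cdot)^{\perp}$ reverses inclusions, and $\{0\}^{\perp}=R^{\c}$, $R^{\perp}=\{0\}$ since the $a_{i}^{*}$ separate the points of the free module $R$. I would prove: (i) if $I$ is an ideal of $R$ then $I^{\perp}$ is a sub(super)coalgebra of $R^{\c}$; (ii) if $C_{0}$ is a sub(super)coalgebra of $R^{\c}$ then $C_{0}^{\perp}$ is an ideal of $R$.

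Both follow from the identity obtained inside the proof of \thref{equival}, $\Phi_{-\mu}\circ\de=\pi_{\mu}$, which by \eqref{piii} and \eqref{eq:0} reads
$$\bigl[\Phi_{-\mu}(\de f)\bigr]_{\la}(a\tt b)=f_{\la}\bigl([a_{\mu}b]\bigr)\qquad(f\in R^{\c},\ a,b\in R),$$
together with the fact that $\Phi_{-\mu}$ is sesquilinear in each tensor slot (scaling by $-\mu$, respectively $\la+\mu$). For (ii): take $v\in C_{0}^{\perp}$, $r\in R$ and $g\in C_{0}$, and write $\de g=\sum g'\tt g''$ with $g',g''\in C_{0}$ (possible because $\de(C_{0})\subseteq C_{0}\tt C_{0}$); the identity gives $g_{\la}([r_{\mu}v])=\sum g'_{-\mu}(r)\,g''_{\la+\mu}(v)=0$, since $g''\in C_{0}$ is annihilated by $v$. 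As $g$ ranges over $C_{0}$ this forces every coefficient of $[r_{\mu}v]\in\CC[\mu]\tt R$ into $C_{0}^{\perp}$, so $[r_{\mu}v]\in\CC[\mu]\tt C_{0}^{\perp}$; skew-symmetry of the $\mu$-bracket gives $[v_{\mu}r]\in\CC[\mu]\tt C_{0}^{\perp}$ as well, so $C_{0}^{\perp}$ is an ideal.

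For (i): let $I$ be an ideal and $f\in I^{\perp}$. If $a\in R,\ b\in I$, or $a\in I,\ b\in R$ (using skew-symmetry of the $\mu$-bracket), then $[a_{\mu}b]\in\CC[\mu]\tt I$, hence $f_{\la}([a_{\mu}b])=0$; so $[\Phi_{-\mu}(\de f)]_{\la}(a\tt b)=0$ whenever $a\in I$ or $b\in I$. Since $R$ is free of finite rank, a direct computation in a homogeneous $\cp$-basis shows that a tensor $T\in R^{\c}\tt R^{\c}$ lies in $R^{\c}\tt I^{\perp}$ (respectively $I^{\perp}\tt R^{\c}$) precisely when $[\Phi_{-\mu}(T)]_{\la}(a\tt b)=0$ for all $a\in R,\ b\in I$ (respectively all $a\in I,\ b\in R$). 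Hence $\de f\in(R^{\c}\tt I^{\perp})\cap(I^{\perp}\tt R^{\c})$. Now $I^{\perp}\cong(R/\overline{I})^{\c}$ is free (conformal functionals kill $\cp$-torsion, $R/\overline{I}$ is free, and the conformal dual of the split exact sequence $0\to\overline{I}\to R\to R/\overline{I}\to 0$ splits), so $I^{\perp}$ is a direct summand of $R^{\c}$; for any direct summand $U$ one has $(R^{\c}\tt U)\cap(U\tt R^{\c})=U\tt U$. Therefore $\de f\in I^{\perp}\tt I^{\perp}$, i.e.\ $I^{\perp}$ is a sub(super)coalgebra.

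It remains to deduce the simplicity statement. First, $\de=0$ iff $[R_{\mu}R]=0$: $\de=0\Leftrightarrow\pi_{\mu}=0$ because $\Phi$ is injective (\prref{prop:phi}), and $\pi_{\mu}=0\Leftrightarrow f_{\la}([a_{\mu}b])=0$ for all $f,a,b\Leftrightarrow[a_{\mu}b]=0$ for all $a,b$. Combining this with (i), (ii) and the fact that $(\cdot)^{\perp}$ is inclusion-reversing and interchanges $\{0\}$ with the whole object, one gets that $R$ has a proper nonzero ideal iff $R^{\c}$ has a proper nonzero sub(super)coalgebra; hence $R$ is simple iff $R^{\c}$ is, and by \thref{equival} this matches up the two classifications. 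The point I expect to require the most care is exactly this last matching of \emph{proper nonzero} substructures: since $(\cdot)^{\perp}$ only sees the $\cp$-saturation of a submodule, one has to exclude proper substructures of full $\cp$-rank (whose annihilators would be zero). This is where the hypotheses of finiteness and freeness are genuinely used — one reduces to saturated ideals / sub(super)coalgebras, on which $(\cdot)^{\perp}$ is an honest inclusion-reversing bijection — and for the simple objects appearing in the classification it can in any event be checked directly.
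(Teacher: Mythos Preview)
Your argument is correct and shares with the paper the core idea of passing through orthogonal complements $I\mapsto I^{\perp}$, $C_{0}\mapsto C_{0}^{\perp}$. The implementation differs: the paper proves that $I^{\perp}$ is a subcoalgebra by dualising the quotient homomorphism $\pi:R\to R/I$ via the functor $G$ of \thref{equival} (so that $\pi^{*}:(R/I)^{\c}\hookrightarrow R^{\c}$ is automatically a coalgebra map with image $I^{\perp}$), and proves that $C_{0}^{\perp}$ is an ideal by dualising the inclusion $C_{0}\hookrightarrow R^{\c}$ via $F$ and taking a kernel. You instead work directly with the identity $\Phi_{-\mu}\circ\de=\pi_{\mu}$ from the proof of \thref{equival} and a basis computation. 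Your route is more elementary and self-contained (it does not invoke the functoriality statement), at the cost of the extra linear-algebra step showing $\de f\in(R^{\c}\tt I^{\perp})\cap(I^{\perp}\tt R^{\c})=I^{\perp}\tt I^{\perp}$; the paper's route is cleaner once \thref{equival} is in hand but tacitly needs $R/I$ (or its free quotient) to fall within the scope of $G$. On the saturation issue you flag at the end, you are in fact more careful than the paper, which asserts $(I^{\perp})^{\perp}=I$ and a bijection on all $\cp$-submodules without qualification; your remark that one should pass to saturated substructures (equivalently, use that the saturation of an ideal is again an ideal, and that torsion is central in a finite Lie conformal algebra) is the honest way to close the argument.
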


\begin{proof}
   Let \( R \) be a finite free Lie conformal superalgebra and \( C = G(R) \). The isomorphism \( \varphi : R \to C^{\c} \) allows us to identify \( R \) with the conformal dual of \( C \). For any \( \mathbb{C}[\partial] \)-submodule \( I \subseteq R \), define its annihilator:
\[
I^\perp = \{ f \in C : f_\lambda(i) = 0 \text{ for all } i \in I \}.
\]
Similarly, for a \( \mathbb{C}[\partial] \)-submodule \( J \subseteq C \), define:
\[
J^\perp = \{ a \in R : f_\lambda(a) = 0 \text{ for all } f \in J \}.
\]
These are \( \mathbb{C}[\partial] \)-submodules, and under the identification \( R \cong C^{\c} \),   the map \( I \mapsto I^\perp\) is a bijection between \( \mathbb{C}[\partial] \)-submodules of \( R \) and \( \mathbb{C}[\partial] \)-submodules of \( C \). We will show that this bijection restricts to one between ideals and subcoalgebras.

Now, let $I$ be an ideal of $R$. Then the quotient map $\pi: R \to R/I$ is a surjective homomorphism of Lie conformal algebras. Its dual $\pi^*: (R/I)^\c \to R^\c$ is an injective homomorphism of differential Lie coalgebras. Now, we prove that the image of $\pi^*$ is exactly $I^\perp$.  
For any $f \in (R/I)^\c$, we have $\pi^*(f) = f \circ \pi$. If $x \in I$, then $\pi(x) = 0$, so $f_\la (\pi(x)) = 0$, hence $\pi^*(f) \in I^\perp$, and conversely, if $g \in I^\perp$, define $\tilde{g}: R/I \to \mathbb{C}[\la]$ by $\tilde{g}_\la(a + I) = g_\la(a)$. This is well-defined since $g$ vanishes on $I$. Then $\pi^*(\tilde{g}) = g$, so $g \in \operatorname{Im}(\pi^*)$.

Since $(R/I)^\c$ is a differential Lie coalgebra and $\pi^*$ is a coalgebra morphism, its image $I^\perp$ is a subcoalgebra of $R^\c$.

Conversely, let $J$ be a subcoalgebra of $R^\c$. The inclusion $i: J \hookrightarrow R^\c$ is a differential Lie coalgebra homomorphism. Its dual $i^*: (R^{\c})^\c \to J^\c$ is a Lie conformal  algebra homomorphism. Using the natural isomorphism $\varphi:R \to (R^{\c})^\c$, we get a Lie conformal  algebra homomorphism ${i}^* \circ\varphi : R \to J^\c$. The kernel of ${i}^* \circ\varphi $ is an ideal of $R$. Using that $[({i}^* \circ\varphi )(a)]_\la(f) = [\varphi (a)]_\la(i(f)) = f_{-\la}(a)$, for all $f\in J, a\in R$, one checks that $\ker({i}^* \circ\varphi ) = J^\perp$. Moreover, these constructions are inverse to each other: if we start with an ideal $I$, then $I^\perp$ is a subcoalgebra, and $(I^\perp)^\perp = I$; if we start with a subcoalgebra $J$, then $J^\perp$ is an ideal, and $(J^\perp)^\perp = J$, finishing the proof of the first part. Since for any finite free Lie conformal superalgebra \( R \), we have $[R_\la R]\neq 0$ if and only if $\de_{R^\c}\neq 0$, simplicity is preserved, and the proposition is proved.
\end{proof}

\section{classification of Lie conformal superalgebras and differential Lie supercoalgebras}\lbb{classification}

The main (and highly non-trivial) result in \cite{DK} is that any finite simple Lie conformal algebra is
isomorphic either to Cur$(\fg)$, where $\fg$ is a simple
finite-dimensional Lie algebra, or to the Virasoro conformal
algebra, where
the {\it Virasoro conformal algebra} is defined by:
\begin{displaymath}
{\rm Vir}=\cp \,L, \qquad \quad [L_\la L]=(\p +2\la)\, L,
\end{displaymath}
and for  $\fg$  a  Lie (super)algebra, the {\it current conformal (super)algebra}
associated to $\fg$ is defined by:
\begin{displaymath}
{\rm Cur }(\fg)=\cp \otimes \fg, \qquad \quad [a_\la b]=[a,b],
\qquad a,b\in \fg.
\end{displaymath}
\vskip .2cm

\noindent Using the correspondence of categories, we obtain the following classification theorem.
\begin{theorem}
  Any finite simple differential Lie coalgebra is isomorphic to one of the following list:

1. $(\mathrm{Vir})^{*_c}=\cp L^*$, with $\delta(L^*)=\d L^*\tt L^*-L^*\tt \d L^*$.

2. $(\mathrm{Cur}(\mathfrak{g}))^{*_c}=\cp\tt \mathfrak{g}^*$, where $(\mathfrak{g^*},\bar{\delta})$ is a simple finite-dimensional Lie coalgebra,

\ \ \ and
\begin{equation*}
\delta(f):=\bar{\delta}(f)\qquad \hbox{ for all }f\in \mathfrak{g}^*.
\end{equation*}
\end{theorem}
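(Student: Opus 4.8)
The plan is to derive the classification directly from that of finite simple Lie conformal algebras in \cite{DK}, using the equivalence of categories established in \thref{equival}. Let $(C,\de)$ be a finite simple differential Lie coalgebra; as in the setting of \thref{equival} we work with free objects, so we may assume $C$ is free (a finite simple differential Lie coalgebra is free: a nonzero torsion submodule $T$ satisfies $\de(T)\subseteq T\tt C+C\tt T$ and yields a proper subcoalgebra, contradicting simplicity — alternatively one simply restricts to the finite free category from the outset). Then $F(C,\de)=(C^\c,[\ _\mu\ ])$ is a finite free Lie conformal algebra, and it is simple by \prref{ideal}. By the main theorem of \cite{DK}, $F(C,\de)$ is isomorphic either to $\mathrm{Vir}$ or to $\mathrm{Cur}(\fg)$ for some simple finite-dimensional Lie algebra $\fg$. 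Since $F$ and $G$ are mutually inverse contravariant equivalences (\thref{equival}), $(C,\de)\cong G(F(C,\de))$ is isomorphic to $G(\mathrm{Vir})=\mathrm{Vir}^\c$ or to $G(\mathrm{Cur}(\fg))=\mathrm{Cur}(\fg)^\c$, and it remains only to make these two coproducts explicit via the recipe of \thref{prop:dual}(b).

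For $\mathrm{Vir}=\cp L$ with $[L_\la L]=(\p+2\la)L$ there is a single structure polynomial $P^{11}_1(\la,\p)=\p+2\la$, so that $Q^{11}_1(x,y)=P^{11}_1(x,-x-y)=2x+(-x-y)=x-y$; then \thref{prop:dual}(b) gives $\de(L^*)=Q^{11}_1(\p\tt1,1\tt\p)(L^*\tt L^*)=\p L^*\tt L^*-L^*\tt\p L^*$, which is item~1.

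For $\mathrm{Cur}(\fg)$, fix a basis $\{e_i\}$ of $\fg$ with $[e_i,e_j]=\sum_k c^{ij}_k e_k$. Since $[e_{i\,\la}e_j]=[e_i,e_j]$ involves neither $\la$ nor $\p$, the structure polynomials are the constants $P^{ij}_k=c^{ij}_k$, hence $Q^{ij}_k=c^{ij}_k$ as well, and \thref{prop:dual}(b) yields $\de(e_k^*)=\sum_{i,j}c^{ij}_k\,e_i^*\tt e_j^*$, extended $\cp$-linearly to $\mathrm{Cur}(\fg)^\c=\cp\tt\fg^*$. Restricted to the finite-dimensional space $\fg^*$ this is exactly the cobracket $\bar{\de}$ transpose to the bracket $[\,\cdot\,,\,\cdot\,]\colon\fg\tt\fg\to\fg$ under the canonical identification $(\fg\tt\fg)^*\cong\fg^*\tt\fg^*$, i.e. the ordinary dual Lie coalgebra of $\fg$; thus $\de(f)=\bar{\de}(f)$ for $f\in\fg^*$, as claimed. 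Finally, finite-dimensional linear duality takes ideals of $\fg$ bijectively onto subcoalgebras of $\fg^*$ (the classical analogue of the annihilator argument in \prref{ideal}), so $\fg$ is simple if and only if $(\fg^*,\bar{\de})$ is a simple finite-dimensional Lie coalgebra; this yields item~2 and exhausts the list.

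The genuinely deep input is the D'Andrea–Kac classification, used as a black box, together with the categorical machinery of \thref{equival} and \prref{ideal}; the remaining work is routine, the only points deserving care being the two polynomial substitutions (to confirm the signs), the fact that no further cases can occur (forced automatically by the equivalence), and — if one does not impose freeness from the start — the reduction of an arbitrary finite simple differential Lie coalgebra to the free case. I expect the last of these, rather than the computations, to be the only place where a little extra argument is needed.
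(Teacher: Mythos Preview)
Your argument is correct and follows the same route as the paper: invoke the equivalence of \thref{equival} together with \prref{ideal} to transfer the D'Andrea--Kac classification, then read off the two coproducts via the recipe of \thref{prop:dual}(b); the paper's own proof is literally the single sentence ``Using the correspondence of categories, we obtain the following classification theorem,'' so your write-up is in fact more detailed than the original.

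One small correction to your parenthetical remark on freeness: the inclusion $\de(T)\subseteq T\tt C+C\tt T$ that you derive (correctly, since $\de$ is a $\cp$-map and $T\tt C+C\tt T$ is exactly the torsion of $C\tt C$) is the \emph{coideal} condition, not the subcoalgebra condition $\de(T)\subseteq T\tt T$, so it does not by itself contradict simplicity in the sense defined here. Your fallback of simply working in the finite free category from the outset is exactly what the paper does (it never addresses the torsion issue either), so the overall argument is unaffected; but if you want to keep the stronger claim you would need a different argument for why torsion forces a proper subcoalgebra.
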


On the other hand, the list of finite simple Lie conformal superalgebras is
much richer, mainly due to existence of several series of super
extensions of the Virasoro conformal algebra. The classification was obtained in \cite{FK}.

The first series is  $W_n$, with
$(n\geq 1)$. More precisely, let $\Lambda(n)$ be the Grassmann
superalgebra in the $n$ odd indeterminate $\xi_1, \xi_2,\ldots ,
\xi_n$, and $W(n)$ the Lie superalgebra of all derivations of $\Lambda(n)$. The Lie conformal superalgebra $W_n$ is
defined as
\begin{equation} \label{eq:3.2}
W_n=\CC[\p]\otimes \left(W(n)\oplus\Lambda(n)\right).
\end{equation}
The $\la$-bracket is defined as follows $(a,b\in W(n); f,g\in
\Lambda(n))$:
\begin{equation} \label{eq:3.3}
[a_\la b]=[a,b], \quad [a_\la f]= a(f)-(-1)^{p(a)p(f)}\la fa,\quad
[f_\la g]=-(\p +2\la )fg
\end{equation}
The Lie conformal algebra $W_n$ is simple for $n\geq 0$ and has
rank $(n+1)2^n$.

The second series is $S_n$. In order to describe it, we
need  the notion of conformal divergence. It is a $\CC[\p]$-module map $div: W_n \to \
$Cur$\ \Lambda (n)$, given by
\begin{displaymath}
div\ a =\sum_{i=1} (-1)^{p(f_i)} \p_i f_i, \qquad \quad div \ f
=-\p \, f,
\end{displaymath}
where $a=\sum_{i=1}^n f_i \p_i\in  W(n)$ and $f\in \Lambda (n)$.
The following identity holds in $\CC[\p]\otimes \Lambda (n)$,
where $D_1, D_2\in W_n$:
\begin{equation} \label{eq:div}
div \ [{D_1}_\la D_2] = (D_1)_\la (div\ D_2) - (-1)^{p(D_1)
p(D_2)} (D_2)_{-\la-\p} (div \ D_1).
\end{equation}
Therefore,
\begin{equation}\label{SSSSS}
  S_n=\{ D\in W_n\ : \ div \ D=0\}
\end{equation}

\vskip .1cm

\noindent is a subalgebra of the Lie conformal superalgebra $W_n$. It is
known that $S_n$ is simple for $n\geq 2$, and finite of rank $n
2^n$.

The next series is a family that include $S_n$. Let $D=\sum_{i=1}^n P_i(\p , \xi) \p_i + f(\p, \xi)$ be
an element of $W_n$. We define the deformed divergence as
\begin{displaymath}
div_bD=div D + bf.
\end{displaymath}
It still satisfies equation \ref{eq:div}, therefore
\begin{displaymath}
S_{n,b}= \{D\in W_n \, |\,  div_b \, D=0\}
\end{displaymath}
is a subalgebra of $W_n$, which is simple for $n\geq 2$ and has
rank $n2^n$. Observe that $S_{n,0} = S_n$.

The
 Lie conformal superalgebra $\tilde S_n$ is constructed
explicitly as follows:
\begin{displaymath}
\tilde S_n=\{D\in W_n \ |\  div((1+\xi_1\dots
\xi_n)D)=0\}=(1-\xi_1\dots \xi_n)S_n.
\end{displaymath}
The Lie conformal superalgebra $\tilde S_n$ is simple for $n\geq
2$ and has rank $n2^n$.

The  Lie conformal superalgebra $K_n$ is
identified with
\begin{equation} \label{eq:k1}
K_n=\CC[\p]\otimes \Lambda(n),
\end{equation}
and the $\la$-bracket for
$f=\xi_{i_1} \dots \xi_{i_r}, g=\xi_{j_1} \dots \xi_{j_s}$
being as follows \cite{FK}:
\begin{equation} \label{eq:k2}
[f_\la g]=\bigg( (r-2)\p (fg) + (-1)^r \sum_{i=1}^n (\p_i f)(\p_i
g)\bigg) + \lambda (r+s-4)fg.
\end{equation}
The Lie conformal superalgebra $K_n$ has rank $2^n$ over $\CC[\p]$.
It is simple for $n\geq 0, n\neq 4$, and the derived algebra $K_4'$ is simple
and has codimension 1 in $K_4$. More precisely,
\begin{equation*}
  K_4 = K_4' \oplus \CC \xi_1\dots\xi_4.
\end{equation*}

The Lie conformal superalgebra $CK_6$ is defined as the subalgebra
of $K_6$ given by (cf. \cite{CK2}, Theorem 3.1)
\begin{equation*}
    CK_6=\cp \hbox{-span}\  \{f+\beta(-1)^{\frac{|f|(|f|+1)}{2}}(-\p)^{3-|f|}
    \, f^{\, \bullet}\, : \, f\in \La (6), \ 0\leq |f|\leq 3\}.
\end{equation*}
\vskip .1cm

\noindent where $\beta^2=-1$ and for  a monomial $f=\xi_{i_1} \dots \xi_{i_r}\in \La (n)$, we let ${f}^{\, \bullet}$ be its
Hodge dual, i.e. the unique monomial in $\La (n)$  such that
${f} f^{\, \bullet}= \xi_1 \dots \xi_n$.
This definition of Hodge dual corresponds to the one in \cite
{CK2} or \cite{CL}, pp. 922. We shall use Hodge dual only for $CK_6$.

\vskip .3cm

The following theorem is the main result of \cite{FK}.

\begin{theorem}
Any finite simple Lie conformal superalgebra is isomorphic to one of the Lie conformal superalgebras of the following list:

1. $W_n \ \ (n\geq 0)$;

2. $S_{n,a}\ \ (n\geq 2, \ a \in \CC)$;

3. $\tilde S_n \ \ (n \ \mathrm{even}, \ n \geq 2)$;

4. $K_n \  \ (n \geq 0, n\neq 4)$;

5. $K'_4$;

6. $CK_6$;

7. $Cur(\mathfrak{g})$, where $\mathfrak{g}$ is a simple finite-dimensional Lie superalgebra.
\end{theorem}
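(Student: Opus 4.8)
The plan is to deduce this from the classification of Fattori and Kac \cite{FK}; since the statement is quoted verbatim from that paper, no new argument is needed here, but let me sketch the strategy that underlies it. The proof does not proceed from the axioms of a Lie conformal superalgebra directly, but rather through the \emph{annihilation Lie superalgebra} functor: to a finite Lie conformal superalgebra $R$ one associates the linearly compact Lie superalgebra $\mathcal{A}(R)=(R\otimes\CC[[t]])/(\p+\p_t)(R\otimes\CC[[t]])$, equipped with the even derivation $T$ induced by $-\p_t$, in such a way that the $\la$-bracket of $R$ is recovered as a generating series for the $t$-adic structure of $\mathcal{A}(R)$. First one checks that finiteness of $R$ as a $\cp$-module forces $\mathcal{A}(R)$ to be linearly compact, and that when $R$ is simple the radical of $\mathcal{A}(R)$ is tightly controlled, so that $\mathcal{A}(R)$ is either $\fg[[t]]$ for a simple finite-dimensional Lie superalgebra $\fg$ (the "current" case, giving item $7$) or a simple linearly compact Lie superalgebra carrying a distinguished even element $t$ whose associated ideal $t\mathcal{A}(R)$ has finite codimension.

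Next one invokes Kac's classification of simple linearly compact Lie superalgebras together with a Guillemin--Weisfeiler--Cartan analysis of the canonical filtration $\mathcal{A}(R)\supseteq t\mathcal{A}(R)\supseteq t^{2}\mathcal{A}(R)\supseteq\cdots$, showing it is transitive of "depth one"; the only simple linearly compact Lie superalgebras admitting such structure turn out to be the formal Cartan series $W(1,n)$, $S(1,n)$, $\tilde S(1,n)$, $K(1,n)$, together with the exceptional $E(1,6)$. Reversing the annihilation construction on each of these (reading off the $\p$-module of coefficients of the appropriate generating distribution) produces exactly $W_n$, $S_{n,a}$, $\tilde S_n$, $K_n$ and $CK_6$; the one-parameter family $S_{n,a}$ appears because $S(1,n)$ admits a one-parameter family of $t$-compatible embeddings into $W(1,n)$, corresponding to the deformed divergence $div_{b}$. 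The exceptional anomaly at $n=4$, where $K_4$ itself is not simple but $K_4'$ is and has codimension $1$, is treated by a direct computation with the bracket \eqref{eq:k2}, and the explicit realization of $CK_6$ inside $K_6$ through the Hodge-dual formula follows \cite{CK2}.

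The main obstacle in \cite{FK} is precisely this rigidity step: proving that the filtration by powers of $t$ is the maximal transitive one, so that no exotic linearly compact superalgebra such as $E(3,6)$, $E(3,8)$ or $E(5,10)$ can arise as $\mathcal{A}(R)$. This is where the finiteness hypothesis on $R$ is essential, since it bounds the growth of $\mathrm{gr}\,\mathcal{A}(R)$ and thereby restricts the admissible $\ZZ$-gradings to a short list that can be checked case by case. For the purposes of the present paper this theorem is used only as input: combined with \prref{ideal} and \thref{equival}, it yields the classification and explicit description of the finite simple differential Lie supercoalgebras developed in the following sections.
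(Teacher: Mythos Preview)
Your proposal is correct and matches the paper's approach: the paper simply cites this theorem as the main result of \cite{FK} without giving any proof, so your acknowledgment that ``no new argument is needed here'' is exactly right. The sketch of the Fattori--Kac strategy via the annihilation superalgebra that you add is accurate and goes beyond what the paper provides, but is not required.
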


Using the  correspondence of categories in Theorem \ref{equival}, Proposition \ref{ideal}
and the construction given in Theorem \ref{prop:dual}, we have one of the main result of this work:

\begin{theorem}
Any  simple differential Lie supercoalgebra of finite rank is isomorphic to one  of the following list:

1. $(W_n)^{*_c} \ \ (n\geq 0)$;

2. $(S_{n,a})^{*_c}\ \ (n\geq 2, \ a \in \CC)$;

3. $(\tilde S_n)^{*_c} \ \ (n \ \mathrm{even}, \ n \geq 2)$;

4. $(K_n)^{*_c} \  \ (n \geq 0, n\neq 4)$;

5. $(K'_4)^{*_c}$;

6. $(CK_6)^{*_c}$;

7. $(Cur(\mathfrak{g}))^{*_c}$, where $\mathfrak{g}$ is a simple finite-dimensional Lie superalgebra.
\end{theorem}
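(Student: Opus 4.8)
The plan is to obtain the statement by transport of structure: combine the contravariant equivalence of Theorem \ref{equival}, the correspondence of simple objects in Proposition \ref{ideal}, and the classification of finite simple Lie conformal superalgebras of Fattori and Kac \cite{FK} (the theorem stated just above). First, however, one must reduce to free $\cp$-modules so that this machinery applies. Given a simple differential Lie supercoalgebra $(C,\delta)$ of finite rank, its $\cp$-torsion submodule $T\subseteq C$ is a $\cp$-submodule, and since $\delta$ is $\cp$-linear, $\delta(T)$ is contained in the torsion submodule of $C\otimes C$. Writing $C=F\oplus T$ with $F$ free, one has $C\otimes C=(F\otimes F)\oplus(F\otimes T)\oplus(T\otimes F)\oplus(T\otimes T)$; a short argument (using the primitive action $\partial(u\otimes v)=\partial u\otimes v+u\otimes\partial v$, e.g.\ the change of variables $u=x+y$ turning multiplication by $x+y$ on $\CC[x]\otimes\CC[y]$ into multiplication by $u$) shows that the first three summands are torsion-free over $\cp$, so the torsion submodule of $C\otimes C$ equals $T\otimes T$ and hence $\delta(T)\subseteq T\otimes T$. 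Thus $T$ is a subcoalgebra, and by simplicity $C$ is free of finite rank over $\cp$.

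With $C$ free of finite rank, Theorem \ref{equival} gives that $R:=F(C,\delta)=(C^{*c},[\,{}_\mu\,])$ is a finite free Lie conformal superalgebra and that the canonical map $\varphi$ furnishes an isomorphism $(C,\delta)\cong G(R)$. Proposition \ref{ideal}, applied to $R$ (under the identification $G(R)\cong C$), sends subcoalgebras of $C$ to ideals of $R$ and matches $\delta\neq0$ with $[R_\mu R]\neq0$; since $(C,\delta)$ is simple, $R$ is a finite simple Lie conformal superalgebra. By the theorem of Fattori and Kac above, $R\cong R_0$ for some $R_0$ in the list $W_n$ $(n\geq0)$, $S_{n,a}$ $(n\geq2,\ a\in\CC)$, $\tilde S_n$ $(n$ even$,\ n\geq2)$, $K_n$ $(n\geq0,\ n\neq4)$, $K_4'$, $CK_6$, or $\mathrm{Cur}(\fg)$ with $\fg$ a simple finite-dimensional Lie superalgebra. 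Applying the functor $G$ to this isomorphism yields $(C,\delta)\cong G(R)\cong G(R_0)=(R_0)^{*c}$, which is precisely an entry of the asserted list, completing the proof. (One also gets for free, from the equivalence, that distinct entries are pairwise non-isomorphic, though this is not claimed in the statement.)

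Since all the substantial work — the functorial equivalence (Theorem \ref{equival}), the ideal/subcoalgebra dictionary (Proposition \ref{ideal}), the explicit dual construction (Theorem \ref{prop:dual}), and the Fattori--Kac classification — is already available, there is essentially no computational obstacle here: the argument is a formal consequence. The only points that genuinely require care are (i) the reduction to the free case, i.e.\ verifying that the $\cp$-torsion submodule of $C$ is a subcoalgebra, and (ii) correctly invoking Proposition \ref{ideal} in the coalgebra-to-conformal-algebra direction (so that simplicity of $(C,\delta)$ yields simplicity of $R$ rather than the reverse). The genuinely calculational ingredient — writing out each conformal dual $(R_0)^{*c}$ explicitly as a coproduct — is not needed for this theorem and is carried out separately in Section~4.
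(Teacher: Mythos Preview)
Your approach is exactly the paper's: the paper offers no proof beyond the sentence ``Using the correspondence of categories in Theorem \ref{equival}, Proposition \ref{ideal} and the construction given in Theorem \ref{prop:dual}, we have one of the main results of this work,'' and then states the theorem. Your write-up fleshes this out and, in fact, is more careful than the paper, since you explicitly address the reduction to the free case (Theorem \ref{equival} and Proposition \ref{ideal} are stated only for \emph{finite free} objects, while the theorem speaks of \emph{finite rank}). Your torsion argument---that $\delta(T)$ lands in the torsion of $C\otimes C$, which equals $T\otimes T$ because $F\otimes F$, $F\otimes T$, $T\otimes F$ are torsion-free under the diagonal $\partial$-action---is correct and is a genuine addition over the paper.

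There is one small gap. From ``$T$ is a subcoalgebra'' and simplicity you conclude $C$ is free, but simplicity only gives $T=0$ or $T=C$; you have not excluded the case $T=C$, i.e.\ $C$ entirely torsion (hence finite-dimensional over $\CC$). This needs a short extra argument to rule out simple pure-torsion differential Lie supercoalgebras. One route: for such $C$ the conformal dual $C^{*c}$ vanishes (any $f\in C^{*c}$ satisfies $p(\lambda)f_\lambda(c)=0$ whenever $p(\partial)c=0$, forcing $f_\lambda(c)=0$), so the machinery says nothing and you must argue directly---for instance via the generalized eigenspace decomposition of $\partial$ on $C$ and on $C\otimes C$, producing a proper nonzero subcoalgebra, or by dualizing to an ordinary finite-dimensional Lie superalgebra and using that torsion forces the induced bracket to degenerate. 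The paper does not address this point either (Proposition \ref{ideal} silently drops ``free'' in its last sentence), so this is a shared lacuna rather than an error peculiar to your argument; but since you raised the freeness issue, you should close it.
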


In the following section we present an explicit description of each of  them.

\section{Simple differential Lie   supercoalgebras of finite rank}\lbb{Lie co}

We shall frequently use the following notations:
\begin{equation} \label{eq:xi}
\xi_I:=\xi_{i_1}\cdots \xi_{i_s}\in \Lambda(n), \quad \hbox{ with }
I=\{i_1,\ldots , i_s\},
\end{equation}
and for any $J\subseteq \{1,\dots,n\}$ and $i\in J$, we define
\begin{equation}\label{epsi}
  \varepsilon_i^J:=\#\{j\in J\, :\, j<i\}.
\end{equation}
This is used to write $\d_i(\xi_J)=(-1)^{\varepsilon_i^J}\xi_{J-\{i\}}$.
We shall consider a basis of $\Lambda(n)$ given by the elements $\xi_I=\xi_{i_1}\cdots \xi_{i_s}$ with $i_1<\ldots < i_s$, therefore we need to write  $\xi_I \xi_J$ as an element of this basis, that is:
\begin{equation}\label{WW1}
  \xi_I\xi_J=
  \left\{
    \begin{array}{ll}
       (-1)^{\alpha(I,J)}\, \xi_{\mathrm{ord}(I,J)}, & \hbox{\ \  if }I\cap J =\emptyset; \\
     \qquad 0 , & \hbox{ \ if }I\cap J \neq \emptyset;
    \end{array}
  \right.
\end{equation}
where ord$(I,J)$ is the ordered set associated to $I\cup J$, and $\alpha(I,J)$ is the number of single permutations needed to order the juxtaposition of sets $I\cdot J$, when $I\cap J=\emptyset$. For example $\xi_{\{3,6\}}\xi_{\{1,2,4\}}=-\xi_{\{1,2,3,4,6\}}$. We shall use that
\begin{equation}\label{alfa}
  (-1)^{\al(I,J)}=(-1)^{\al(\!J\, ,\,I) + |I| |J|}.
\end{equation}

\vskip .2cm

\subsection{Differential Lie   supercoalgebra $(W_n)^{*_c}$}\lbb{Wn}

Recall from (\ref{eq:3.2}) that
\begin{equation*}
W_n=\CC[\p]\otimes \left(W(n)\oplus \Lambda(n)\right).
\end{equation*}
We denote by $\xi_I^*$ and $(\xi_I\d_i)^*$ the elements in the dual basis of $\Lambda(n)$ and $W(n)$, respectively.  In order to apply (\ref{eq:De}), we need to compute the $\la\,$-brackets (\ref{eq:3.3}) in $W_n$ in terms of the basis in $\Lambda(n)$ and $W(n)$. We have:
\begin{equation*}
  [\,{\xi_I}_\la \,\xi_J\,]=
   \left\{
     \begin{array}{ll}
       -(2\la+\d)\, (-1)^{\alpha(I,J)}\, \xi_{\mathrm{ord}(I,J)}, & \hbox{ \ \ if } I\cap J =\emptyset;\\
       \qquad \qquad 0, & \hbox{\ \ \ if } I\cap J \neq\emptyset;
     \end{array}
   \right.
\end{equation*}
and
\begin{equation*}
  [\,{\xi_I\d_i}_\la \,\xi_J\,]= \xi_I \d_i(\xi_J)-\la (-1)^{(|I|+1)|J|}\xi_J\xi_I\d_i,
\end{equation*}
where
\begin{equation*}
  \xi_I \d_i(\xi_J)=\left\{
                      \begin{array}{ll}
                        (-1)^{\varepsilon_i^J+\, \al(I,J-\{i\})} \,  \xi_{\mathrm{ord}(I,J-\{i\})}, & \hbox{\ \  if } i\in J \hbox{ and } I\cap (J-\{i\})=\emptyset.\\
                       \qquad \qquad 0 , & \hbox{\ \  otherwise.}
                      \end{array}
                    \right.
\end{equation*}
and the expression of $\xi_J\xi_I\d_i$ follows by (\ref{WW1}). Similarly, one can describe
\vskip -.1cm
\begin{equation*}
  [\,{\xi_I\d_i}_\la \,\xi_J\d_j\,]= \xi_I \d_i(\xi_J)\d_j- (-1)^{(|I|+1)(|J|+1)}\xi_J\d_j(\xi_I)\d_i,
\end{equation*}
\vskip .1cm
\noindent in terms of the basis in $W(n)$. Now, using (\ref{eq:De}), we can write the co-product in $(W_n)^\c$.

\

\begin{theorem}
  The Lie supercoalgebra structure on $(W_n)^\c$ is defined by the following formulas, in the dual basis:

  \begin{align}\label{W1}
    \de (\xi_K^*) &= \sum_{\substack{I,J \, : \, I\cap J=\emptyset \\
    \mathrm{ord}(I,J)=K}}  (-1)^{\alpha(I,J)}\, \Big[\, \xi_I^*\tt \d \, \xi_J^*-
    (-1)^{|I|\,|J|} \,
    \d \,\xi_J^*\tt \xi_I^*\Big] \\
     &
     + \sum_{\substack{I,J,\, i \, : \, i\in J  \\ I\cap (J-\{i\})=\emptyset \\
    \mathrm{ord}(I,J-\{i\})=K}}  (-1)^{\varepsilon_i^J+\, \alpha(I,J-\{i\})}\, \Big[(\xi_I \d_i)^*\tt  \xi_J^*- (-1)^{|J|(|I|+1)}  \xi_J^*\tt (\xi_I \d_i)^*
    \Big]\nonumber
  \end{align}
and
\begin{align}\label{W2}
    \de &((\xi_K \d_k)^*) = \sum_{\substack{I,J \, : \, I\cap J=\emptyset \\
    \mathrm{ord}(I,J)=K}}  (-1)^{\alpha(I,J)}\, \Big[\, \xi_I^*\tt \d \, (\xi_J\d_k)^*-
    (-1)^{|I|(|J|+1)} \,
    \d \,(\xi_J\d_k)^*\tt \xi_I^*\Big] \\
    + &
      \sum_{\substack{I,J,\, i \, : \, i\in J  \\ I\cap (J-\{i\})=\emptyset \\
    \mathrm{ord}(I,J-\{i\})=K}}  (-1)^{\varepsilon_i^J+\, \alpha(I,J-\{i\})}\, \Big[(\xi_I \d_i)^*\tt  (\xi_J\d_k)^*- (-1)^{(|I|+1)(|J|+1)}  (\xi_J\d_k)^*\tt (\xi_I \d_i)^*
    \Big].\nonumber
  \end{align}
\end{theorem}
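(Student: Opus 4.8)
The plan is to apply the recipe from Theorem~\ref{prop:dual}~(b), equation~(\ref{eq:De}), directly to the generators of $W_n$. The key input is that formula: if $[{a_i}_\la a_j] = \sum_k P^{ij}_k(\la,\p)\, a_k$, then $\de(a_k^*) = \sum_{i,j} Q^{ij}_k(\p\tt 1, 1\tt\p)\, a_i^*\tt a_j^*$, where $Q^{ij}_k(x,y) = P^{ij}_k(x, -x-y)$. So the entire proof is a bookkeeping computation: read off the structure constants $P^{ij}_k$ from the three $\la$-bracket formulas just displayed before the theorem (the $[\xi_I{}_\la\xi_J]$ bracket, the $[(\xi_I\d_i){}_\la\xi_J]$ bracket, and the $[(\xi_I\d_i){}_\la(\xi_J\d_j)]$ bracket), substitute $\la\mapsto x=\p\tt 1$ and $\p\mapsto -x-y = -\p\tt 1 - 1\tt\p$, and collect terms indexed by the target basis element $\xi_K^*$ or $(\xi_K\d_k)^*$.

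Concretely, I would first treat $\de(\xi_K^*)$. Two families of brackets have $\xi_K$ (up to $\C[\p]$-coefficient) appearing on the right-hand side: the bracket $[\xi_I{}_\la\xi_J] = -(2\la+\p)(-1)^{\al(I,J)}\xi_{\mathrm{ord}(I,J)}$, which contributes when $\mathrm{ord}(I,J)=K$; and the "derivation acting on a form" part of $[(\xi_I\d_i){}_\la\xi_J]$, namely $\xi_I\d_i(\xi_J) = (-1)^{\ep_i^J+\al(I,J-\{i\})}\xi_{\mathrm{ord}(I,J-\{i\})}$, which contributes when $i\in J$, $I\cap(J-\{i\})=\emptyset$, and $\mathrm{ord}(I,J-\{i\})=K$ (note the $\la$-linear term $-\la(-1)^{(|I|+1)|J|}\xi_J\xi_I\d_i$ produces a $W(n)$-valued term, so it feeds into $\de((\xi_K\d_k)^*)$, not $\de(\xi_K^*)$). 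For the first family, $P^{ij}_k(\la,\p) = -(2\la+\p)(-1)^{\al(I,J)}$, so $Q(x,y) = -(2x + (-x-y))(-1)^{\al(I,J)} = -(x-y)(-1)^{\al(I,J)}$, giving $(-1)^{\al(I,J)}(\,1\tt\p - \p\tt 1\,)$ after the substitution $x=\p\tt1,\ y=1\tt\p$; pairing with $\xi_I^*\tt\xi_J^*$ and using the skew-symmetrization one obtains the first sum in (\ref{W1}) — here the sign $(-1)^{|I||J|}$ comes from reorganizing via (\ref{alfa}) or equivalently from the coalgebra skew-symmetry, and this is where parity signs must be tracked carefully. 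For the second family $P$ is $\la$-independent, so $Q$ equals $P$, yielding the second sum. The computation of $\de((\xi_K\d_k)^*)$ is analogous: the $W(n)$-valued brackets are the $\la$-linear piece of $[(\xi_I\d_i){}_\la\xi_J]$ and the bracket $[(\xi_I\d_i){}_\la(\xi_J\d_j)]$, and one collects those whose target is $\xi_K\d_k$.

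The main obstacle is not conceptual but combinatorial: correctly matching the sign conventions. One must be consistent about (i) the transpose/substitution rule $Q^{ij}_k(x,y)=P^{ij}_k(x,-x-y)$ and which tensor slot receives $\p\tt1$ versus $1\tt\p$; (ii) the Koszul/parity signs $(-1)^{p(a)p(b)}$ that appear because $\de$ lands in the $(-1)^{p(a)p(b)}$-skew-symmetric span, which manifests as the factors $(-1)^{|I||J|}$, $(-1)^{|J|(|I|+1)}$, $(-1)^{(|I|+1)(|J|+1)}$ multiplying the swapped terms; and (iii) the Grassmann reordering signs $(-1)^{\al(I,J)}$, $(-1)^{\ep_i^J}$ from (\ref{WW1})--(\ref{epsi}) and the identity (\ref{alfa}). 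A clean way to organize the verification is to check one low-rank case (say $n=1$ or a single triple of basis elements with $n=2$) by hand against the abstract coalgebra axiom $(I\tt\de)\de - (\tau\tt I)(I\tt\de)\de = (\de\tt I)\de$, which Theorem~\ref{prop:dual}~(b) guarantees holds, and otherwise simply cite Theorem~\ref{prop:dual}~(b): since $(W_n)^\c = G(W_n)$ by definition and $G$ produces a differential Lie supercoalgebra, formulas (\ref{W1})--(\ref{W2}) are nothing but (\ref{eq:De}) written out explicitly, so the only thing requiring proof is that the explicit formulas agree with (\ref{eq:De}), which is the sign-tracking exercise just described.
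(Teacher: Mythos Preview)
Your proposal is correct and follows essentially the same approach as the paper: both apply (\ref{eq:De}) directly, read off the structure constants $P^{ij}_k$ from the three displayed $\la$-bracket formulas, perform the substitution $\la=\p\tt1$, $\mu=-\p\tt1-1\tt\p$, and then reorganize the resulting sums using (\ref{alfa}) and an interchange of summation indices $(i,I)\leftrightarrow(j,J)$ to obtain the skew-symmetrized forms (\ref{W1}) and (\ref{W2}). The paper carries out exactly the sign bookkeeping you outline, so there is no conceptual difference.
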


\vskip .4cm

\begin{proof}
\noindent Using (\ref{eq:De}), for any $f\in (W_n)^\c$, we have
\begin{eqnarray*}
  & &\de(f) = \sum_{I,J}\, \Big( f_\mu([\,{\xi_I}_\la \, \xi_J])\Big) (\xi_I^*\tt\xi_J^*)
\,  +   \sum_{\substack{j=1\dots n \\ I,J}}\, \Big( f_\mu([\,{\xi_I}_\la \, \xi_J\d_j])\Big) (\xi_I^*\tt(\xi_J\d_j)^*) \\
   &+&  \sum_{\substack{i=1\dots n \\ I,J}}\, \Big( f_\mu([\,{\xi_I\d_i}_\la \, \xi_J])\Big) ((\xi_I\d_i)^*\tt \xi_J^*)
\, + \sum_{\substack{i,j=1\dots n \\ I,J}}\, \Big( f_\mu([\,{\xi_I\d_i}_\la \, \xi_J\d_j])\Big) ((\xi_I\d_i)^*\tt(\xi_J\d_j)^*),
\end{eqnarray*}
where we must remember that we have to replace $
\la=\p\otimes 1$ and $\mu= -\p\otimes 1 - 1\otimes \p $.
Now, we use the description of the $\la$-brackets:
\begin{eqnarray*}
  \de(f) & = & \sum_{I,J }\, \Big( f_\mu \left( (-1)^{\alpha(I,J)+1}\, (2\la+\d)\, \xi_{\mathrm{ord}(I,J)}\right)\Big) (\xi_I^*\tt\xi_J^*) \\
\,
    & - & \sum_{\substack{j=1\dots n \\ I,J}}\, (-1)^{|I|(|J|+1)}\Big( f_\mu\left(\xi_J \d_j({\xi_I})+(-1)^{|I|(|J|+1)}(\la+\d) \, \xi_I\xi_J\d_j\right)\Big) (\xi_I^*\tt(\xi_J\d_j)^*) \\
   &+&  \sum_{\substack{i=1\dots n \\ I,J}}\,
\Big( f_\mu\left(\xi_I \d_i({\xi_J})-(-1)^{(|I|+1)|J|}\la \, \xi_J\xi_I\d_i\right)\Big) ((\xi_I\d_i)^*\tt \xi_J^*)  \\
   &+& \sum_{\substack{i,j=1\dots n \\ I,J}}\,
\Big( f_\mu\left(\xi_I \d_i({\xi_J})\d_j-(-1)^{(|I|+1)(|J|+1)} \, \xi_J\d_j(\xi_I)\d_i\right)\Big) ((\xi_I\d_i)^*\tt(\xi_J\d_j)^*).
\end{eqnarray*}
In particular, if we take $f=\xi_K^*$ (in the dual basis) and we replace $
\la=\p\otimes 1$ and $ \mu= -\p\otimes 1 - 1\otimes \p $, we obtain:
\begin{eqnarray*}
  \de(\xi_K^*) & = & \sum_{\substack{I,J\, :\, I\cap J=\emptyset\\ \mathrm{ord}(I,J)=K}}\,  (-1)^{\alpha(I,J)}\, \left(1\tt\d -\d\tt 1 \right) (\xi_I^*\tt\xi_J^*) \\
\,
    & - & \sum_{\substack{I,J,\, j \, :\, j\in I \\ J\cap (I-\{j\})=\emptyset\\
\mathrm{ord}(J,I-\{j\})=K}}
\, (-1)^{|I|(|J|+1)+\varepsilon_j^I+\al(J,I-\{j\})} (\xi_I^*\tt(\xi_J\d_j)^*) \\
   &+&  \sum_{\substack{ I,J,\, i \, :\, i\in J \\I\cap (J-\{i\})=\emptyset\\
\mathrm{ord}(I,J-\{i\})=K}}
\, (-1)^{\varepsilon_i^J+\al(I,J-\{i\})}  ((\xi_I\d_i)^*\tt \xi_J^*).
\end{eqnarray*}
Observe that if we use (\ref{alfa}) in the first sum, we get the first sum in (\ref{W1}), and if we interchange $i,I$ with $j,J$ in the second sum, we can put the last two sums together, obtaining the last term in (\ref{W1}), finishing the first part of the theorem.

Now, we take $f=(\xi_K\d_k)^*$, and replace ${
\la=\p\otimes 1,\ \mu= -\p\otimes 1 - 1\otimes \p }$, we obtain:
\begin{eqnarray*}
  \de((\xi_K\d_k)^*) & = & \sum_{\substack{I,J\, :\, I\cap J=\emptyset\\ \mathrm{ord}(I,J)=K}}\,  (-1)^{\alpha(I,J)}\, (1\tt\d)\, (\xi_I^*\tt(\xi_J\d_k)^*) \\
\,
    & - & \sum_{\substack{I,J\, :\, I\cap J=\emptyset\\  \mathrm{ord}(J,I)=K}}\,  (-1)^{|J|(|I|+1)+\alpha(J,I)}\, (\d\tt 1)\, ((\xi_I\d_k)^*\tt \xi_J^*) \\
   &+&  \sum_{\substack{I,J,\, i \, :\, i\in J \\ I\cap (J-\{i\})=\emptyset\\
 \mathrm{ord}(I,J-\{i\})=K}}
\, (-1)^{\varepsilon_i^J+\al(I,J-\{i\})}  ((\xi_I\d_i)^*\tt (\xi_J\d_k)^*) \\
    & - &
\sum_{\substack{I,J,\, j \, :\, j\in I \\ J\cap (I-\{j\})=\emptyset\\
 \mathrm{ord}(J,I-\{j\})=K}}
\, (-1)^{(|I|+1)(|J|+1)+\varepsilon_j^I+\al(J,I-\{j\})} ((\xi_I\d_k)^*\tt(\xi_J\d_j)^*) \\
\end{eqnarray*}
Observe that  if  we interchange $i,I$ with $j,J$ in the second sum, then the first two sums produce  the first sum in (\ref{W2}), and we can put the last two sums together by interchanging $i,I$ with $j,J$, obtaining the last term in (\ref{W2}), finishing the proof.
\end{proof}

\subsection{Differential Lie   supercoalgebra $(S_n)^{*_c}$}\lbb{Sn}

Recall from (\ref{SSSSS}) that
\begin{equation*}
 S_n=\{ D\in W_n\ : \ div \ D=0\}.
\end{equation*}
For any $\xi_I\in \Lambda(n)$, with $|I|<n$, we define
\begin{equation*}
  B_I=( |I|-n )\, \xi_I + \p\, \sum_{\substack{i=1 \\ i\notin I}}^{n}\ (\xi_I \xi_i) \p_i.
\end{equation*}
A simple computation shows that $B_I\in S_n$. For any $I, i\notin I$ and $j\notin I$, we also consider the following elements in $S_n$
\begin{align*}
  A_{I,i} &  =\xi_I\, \p_i  \\
  A_{I ,i, j} &  =\xi_I \,(\xi_i\, \p_i -  \xi_j\, \p_j).
\end{align*}
We shall frequently use the following notations: for any $I\subseteq \{1,\dots,n\}$, we denote by $I^c$ the complement of $I$ in $\{1,\dots,n\}$.

Observe that
 \begin{equation}\label{bbaassee}
   \big\{A_{I,i}\big\}\cup \big\{ A_{I ,  i_l, i_{l+1}}\, | \  l=1,\dots , k-1, \ \mathrm{ with }\ \,  I^c=\{i_1,\dots , i_k\}, \mathrm{ and }\  i_1<\cdots < i_k \big\},
 \end{equation}
is a basis (over $\CC$) of $S(n)$, the $0$-divergence subalgebra of $W (n)$, with $(n-1)2^n+1$ elements.

Then, by \cite{FK}, p.38, we have that the elements in (\ref{bbaassee}), together with $\{B_I\}$, form a basis of $S_n$ as a $\CC[\p]$-module.

In order to apply (\ref{eq:De}), we need to compute the $\la\,$-brackets (\ref{eq:3.3}) in $S_n$ in terms of these generators.

\begin{proposition}
  The $\la\,$-brackets  in $S_n$ are the following.

\vskip .2cm

\noindent $\bullet$ If $i,j\notin I$ and $l,m\notin J$:
\begin{equation*}
  \big[ A_{I,i,j}\,\,_\la \, A_{J,l,m} \big] =  0.
\end{equation*}
\noindent  $\bullet$ If $i,j\notin I$ with $i<j$, and $r\notin J$:
\begin{equation*}
  \big[ A_{I,i,j}\,\,_\la \, A_{J,r}\big]
=\left \{
    \begin{aligned}
     \hbox{\small $ { (-1)^{\varepsilon_r^I +1+|I| + |J|}  \,
\xi_{I-\{r\}} \, \xi_J \, (\xi_i\p_i-\xi_j\p_j)}$}
&,\quad \ \hbox{\footnotesize $ { \text{if} \ \ \ r\in I; i,j\notin
  (I-\{r\})\cup J;
I\cap J=\emptyset}$} \\
\tilde\de_{r,j}\ \xi_I \xi_J \p_j - \tilde\de_{r,i}\  \xi_I \xi_J \p_i \  ,\qquad\ \
&\quad \ \  \hbox{\footnotesize $ {  \text{if} \   \  \ r\notin I; I\cap J=\emptyset}$} \\
      0 \, ,\qquad\qquad\qquad\qquad  &\quad \  \  \hbox{\footnotesize $ {  \text{otherwise} }$}
    \end{aligned}
  \right .
\end{equation*}
\noindent where $\tilde\delta_{i,j}$ is the Kronecker delta.

\vskip .2cm

\noindent  $\bullet$ If $i\notin I$, and $j\notin J$:
\begin{equation*}
  \big[ A_{I,i}\,\,_\la \, A_{J,j}\big]
=\left \{
    \begin{aligned}
 0\, ,\qquad\qquad\qquad \qquad
&\quad \    \text{if}  \ \ \ i\notin J; j\notin I\qquad  \qquad\\
     (-1)^{\varepsilon_i^J }  \,
\xi_{I} \, \xi_{J-\{i\}} \, \p_j \, ,\  \ \ \qquad \qquad
&\quad \   \text{if} \ \ \   i\in J; j\notin I \qquad \qquad \\
(-1)^{\varepsilon_j^I + |I|}  \,
\xi_{I-\{j\}} \, \xi_{J} \,  \p_i  \, , \ \ \qquad \qquad
&\quad \   \text{if} \ \ \   i\notin J; j\in I  \\
       (-1)^{\varepsilon_i^J +\varepsilon_j^I+|I| + |J|}  \,
\xi_{I-\{j\}} \, \xi_{J-\{i\}} \, (\xi_j\p_j-\xi_i\p_i) \, , \ \
& \quad \   \text{if} \ \ \ i\in J; j\in
  I
    \end{aligned}
  \right .
\end{equation*}

\vskip .2cm

\noindent  $\bullet$  If $I\cap J\neq \emptyset$, then $\big[ B_{I}\,\,_\la \, B_{J}\big]=0$.

\hskip -.05cm If $I\cap J= \emptyset$, then
\begin{equation*}
  \big[ B_{I}\,\,_\la \, B_{J}\big]=\la \, (2n-|I|-|J|)\, B_{I\cdot J} \, +\,
\p (n-|I|)\, B_{I\cdot J}.
\end{equation*}

\vskip .2cm

\noindent  $\bullet$ 
 If $I\cap J\neq \emptyset$, then $\big[ A_{I,i}\,\,_\la \, B_{J}\big]=0$.

\hskip -.05cm If $I\cap J= \emptyset$, then:

\noindent if $i\notin I$ and $i\notin J$:
\begin{equation*}
  \big[ A_{I,i}\,\,_\la \, B_{J}\big]= (-1)^{|J|} \big[ \la\, (n-|J|+1-|I|) + \p\, (1-|I|)\big]\, \xi_I\xi_J \p_i,
\end{equation*}
and if $i\notin I$ and $i\in J$:
\begin{align*}
  \big[ A_{I,i}\,\,_\la \, B_{J}\big] = 
  &
\left(\textstyle{\frac{|J|-n }{|I|+|J|-n-1}}\right)  \,  (-1)^{\varepsilon_i^J+\al(I,J-\{i\})} \, B_{\mathrm{ord}(I,J-\{i\})}\\
& 
+ \p   \left(\textstyle{\frac{|I|-1 }{|I|+|J|-n-1}}\right)    (-1)^{\varepsilon_i^J+\al(I,J-\{i\})}  \,
 \sum_{j\notin  I\cup (J-\{i\})}  \ A_{\mathrm{ord}(I,J-\{i\}),j,i} \\
& 
+ \la \, (-1)^{\varepsilon_i^J+\al(I,J-\{i\})} \sum_{\substack{j\notin I\cup J  }}   \ A_{\mathrm{ord}(I,J-\{i\}),j,i} .
\end{align*}

\vskip .2cm

\noindent  $\bullet$ If $i,k \notin I$:
\begin{equation*}
  \big[ A_{I,i,k}\,\,_\la \, B_{J}\big]
=\left \{
    \begin{aligned}
 0\qquad\qquad\qquad \qquad
&,\,\,\,  \text{if}  \ \  i,k\in J\qquad  \qquad\\
     (\la (n-|J|-|I|)-\p |I|)\,\xi_I\xi_J(\xi_i\p_i-\xi_k\p_k)\  \ \
&,\,\,\,  \text{if} \ \    i\notin J; k\notin J \qquad \qquad \\
\left(\textstyle{\frac{ |J|-n}{|I|+|J|-n}}\right) B_{IJ}+
\left(\la +\p\, \textstyle{\frac{ |I|}{|I|+|J|-n}}\right)
\dsum_{j\notin I\cup J}\xi_I\xi_J(\xi_j\p_j-\xi_k\p_k)
&,\,\,\, \text{if} \ \    i\in J; k\notin J  \\
-\left(\textstyle{\frac{ |J|-n}{|I|+|J|-n}}\right) B_{IJ}-
\left(\la +\p\, \textstyle{\frac{ |I|}{|I|+|J|-n}}\right)
\dsum_{j\notin I\cup J}\xi_I\xi_J(\xi_j\p_j-\xi_i\p_i)
&,\,\,\,  \text{if} \ \    i\notin J; k\in J
    \end{aligned}
  \right .
\end{equation*}
where the previous terms are 0 if $I\cap J\neq \emptyset$.
\end{proposition}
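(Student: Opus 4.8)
The plan is to reduce everything to the defining bracket rules \eqref{eq:3.3} on $W_n$ together with conformal sesquilinearity, and then to a bookkeeping problem: since $S_n$ is a subalgebra of $W_n$, each bracket below lies in $S_n$ and must be re-expressed in the $\cp$-basis $\{A_{I,i}\}\cup\{A_{I,i_l,i_{l+1}}\}\cup\{B_I\}$. For this last step I would use the following decomposition principle. Viewing $W_n=\cp\tt(W(n)\oplus\Lambda(n))$, the only basis elements with a nonzero $\Lambda(n)$-component are the $B_I$, whose $\Lambda(n)$-component is $(|I|-n)\xi_I$; every $A_{I,i}$ and $A_{I,i,j}$ is a pure vector field. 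Hence, given $D\in S_n$: the coefficient of $B_K$ in $D$ equals the coefficient of $\xi_K$ in the $\Lambda(n)$-part of $D$ divided by $|K|-n$ (nonzero since $|K|<n$), and after subtracting $\sum_K(\text{that coefficient})\,B_K$ one is left with a divergence-free vector field in $\cp\tt S(n)$, which is expanded in the remaining basis (\ref{bbaassee}) by the $\cp$-linear extension of the fact, recalled from \cite{FK}, that (\ref{bbaassee}) is a $\CC$-basis of $S(n)$.

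For the first three formulas all arguments lie in $W(n)\subset W_n$, so by \eqref{eq:3.3} the $\la$-bracket is just the ordinary Lie superbracket of polynomial vector fields (no $\la$, no $\p$), computed via the super-Leibniz rule. The resulting constant-coefficient element of $S(n)$ is then matched against (\ref{bbaassee}), the only real work being the signs $\varepsilon_\bullet^\bullet$, $\alpha(\bullet,\bullet)$ and the parities; collecting contributions gives, case by case, the three displayed formulas, including the vanishing ones.

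For the three brackets involving the $B$'s I would substitute $B_J=(|J|-n)\xi_J+\p\sum_{l\notin J}(\xi_J\xi_l)\p_l$ (and similarly $B_I$), and use conformal sesquilinearity $[a_\la\p b]=(\la+\p)[a_\la b]$, $[\p a_\la b]=-\la[a_\la b]$ to move the $\p$'s outside. This reduces each bracket to a sum of elementary brackets of the shapes $[\xi_I\p_i\,_\la\xi_J]$ (rule two of \eqref{eq:3.3}), $[\xi_I\p_i\,_\la\xi_J\xi_l\p_l]$ and $[\xi_I\p_i\,_\la\xi_J\p_j]$ (rule one), and $[\xi_I\,_\la\xi_J]$ (rule three), each of which is one line. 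Summing and then applying the decomposition principle of the first paragraph produces the stated answers: the $\Lambda(n)$-part of the collected bracket contributes the $B$-term, after dividing by $|\mathrm{ord}(\cdot,\cdot)|-n=|I|+|J|-1-n$ (or the analogous quantity) — which is exactly where the coefficients $\tfrac{|J|-n}{|I|+|J|-n-1}$, $\tfrac{|I|}{|I|+|J|-n}$, etc., come from — while the leftover divergence-free vector field contributes the $\la$- and $\p$-weighted sums of $A_{\mathrm{ord}(\cdot),j,i}$'s. The case splits ($I\cap J=\emptyset$ or not; $i\in J$ or not; $i,k\in J$ or not) are precisely the configurations for which the relevant products $\xi_I\xi_J$ and $\xi_I\xi_{J-\{i\}}$ are nonzero.

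The genuinely delicate step is this last re-expansion: after the sesquilinear reductions one must separate cleanly the $\Lambda(n)$- and $W(n)$-components and then rewrite the residual divergence-free vector field in the basis (\ref{bbaassee}); here the signs $(-1)^{\varepsilon_\bullet^\bullet}$, $(-1)^{\alpha(\cdot,\cdot)}$ and the super-signs, together with the precise index ranges of the sums, must be tracked with care, and the rational coefficients are pinned down. One must also check the borderline cases: when $|I|+|J|-1-n=0$ (or $|I|+|J|-n=0$) the putative $B$-summand would be $B_{\{1,\dots,n\}}$, which is not among the generators, so there the $\Lambda(n)$-component of the bracket must vanish identically — as it must, since the bracket lies in $S_n$ while $\xi_{\{1,\dots,n\}}\notin S_n$. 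Granting these verifications, the proposition follows by the case-by-case computation just outlined.
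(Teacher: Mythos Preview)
Your proposal is correct and follows the same approach as the paper, which simply states that ``the proof of the previous Proposition follows by straightforward computations'' without further detail. Your outline---computing the brackets inside $W_n$ via \eqref{eq:3.3} and conformal sesquilinearity, then using the $\Lambda(n)$-component to isolate the $B$-coefficients and expanding the residual divergence-free vector field in the basis (\ref{bbaassee})---is exactly the straightforward computation the paper has in mind, and in fact you have spelled out more of the mechanism (in particular the origin of the rational coefficients and the borderline-index check) than the paper does.
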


\

\noindent The proof of the previous Proposition follows by straightforward computations.
 Now, using (\ref{eq:De}), we can write the co-product in $(S_n)^\c$.

\

\begin{theorem}
  The Lie supercoalgebra structure on $(S_n)^\c$ is defined by the following formulas, in the dual basis:
  
  \vskip .2cm 
  
\noindent $\bullet$  For all $K$ with $|K|<n$,

 \begin{align}\label{S1}
 & \de(B_K^*) =
 \sum_{\substack{I,J\, :\, I\cap J=\emptyset\\  \mathrm{ord}(I,J)=K}}\,  (-1)^{\alpha(I,J)}\,  (n-|J|)\, \Big[ \p (B_I^*)\tt B_J^* - (-1)^{|I||J|}\, B_J^*\tt \p (B_I^*)\Big]
\\
   & +
\sum_{\substack{ I,J,i \, : \,i\in J, \, i\notin I  \\ I\cap (J-\{i\})=\emptyset \\
     \mathrm{ord}(I,J-\{i\})=K}}
\left(\textstyle{\frac{ |J|-n}{|I|+|J|-n-1}}\right)
 (-1)^{\varepsilon_i^J+\, \alpha(I,J-\{i\})} \Big[A_{I,i}^*\tt  B_J^*- (-1)^{(|I|+1)|J|}  B_J^*\tt A_{I,i}^*
    \Big]\nonumber
\\
   & +
\sum_{\substack{ I,J \, : \, 
    \mathrm{ord}(I,J)=K}} \ \ 
\sum_{\substack{ r \, : \, i_r\in J, \, i_{r+1}\notin J}}    
\left(\textstyle{\frac{ |J|-n}{|I|+|J|-n}}\right)
(-1)^{\alpha(I,J)} \Big[A_{I,i_r,i_{r+1}}^*\tt  B_J^*   - (-1)^{|I||J|}  B_J^*\tt A_{I,i_r,i_{r+1}}^*
    \Big]\nonumber
    \\
& - 
\sum_{\substack{ I,J \, : \, 
    \mathrm{ord}(I,J)=K}} \ \ 
\sum_{\substack{ r \, : \, i_r\notin J, \, i_{r+1}\in J}}    
\left(\textstyle{\frac{ |J|-n}{|I|+|J|-n}}\right)
(-1)^{\alpha(I,J)} \Big[A_{I,i_r,i_{r+1}}^*\tt  B_J^*   - (-1)^{|I||J|}  B_J^*\tt A_{I,i_r,i_{r+1}}^*
    \Big]\nonumber    
\end{align}
where in the last two double sums, we use the notation $I^c=\{i_1,\dots ,i_l\}$,  with $i_1<\dots < i_l$, where $I^c$ is the complement of $I$ in $\{1,\dots,n\}$.

  \vskip .2cm 
  
\noindent $\bullet$  For all $K$, and $k\notin K$,

\begin{align}\label{S2}
    \de(A_{K,k}^*) 
=&
\sum_{\substack{I,J \, : \, I\cap J=\emptyset \\
    \mathrm{ord}(I,J)=K\\ k\notin I\cup J, \,k=i_r}}  (-1)^{\alpha(I,J)}\,
\Bigg\{\,- A_{I,k,{i_{r+1}}}^*\tt  A_{J,k}^*
+
A_{I,{i_{r-1}},_k}^*\tt  A_{J,k}^*\\
& \qquad \qquad \qquad \hskip 2cm +
    (-1)^{|I|(|J|+1)} \,
   \Big[A_{J,k}^*\tt A_{I,k,{i_{r+1}}}^*
 -
   A_{J,k}^* \tt A_{I,{i_{r-1}},_k}^*
\Big]
\Bigg\}
\nonumber\\
    + &
      \sum_{\substack{ I,J ,i\, : \,i\in J, \,i\notin I, \, k\notin I\cup J \\ I\cap (J-\{i\})=\emptyset \\
    \mathrm{ord}(I,J-\{i\})=K}}  (-1)^{\varepsilon_i^J+\, \alpha(I,J-\{i\})}\, \Big[A_{I,i}^*\tt  A_{J,k}^*- (-1)^{(|I|+1)(|J|+1)}  A_{J,k}^*\tt A_{I,i}^*
    \Big]\nonumber\\
    + &
 \sum_{\substack{I,J\, :\, I\cap J=\emptyset\\ \mathrm{ord}(I,J)=K}}\,  (-1)^{\alpha(I,J)+|J|}\, \Bigg\{
(n-|J|)\, \Big[ \p (A_{I,k}^*)\tt B_J^*
-
    (-1)^{(|I|+1)|J|} \, B_J^*\tt \p (A_{I,k}^*)
 \Big]
 \nonumber\\
& \qquad \qquad \qquad \qquad \qquad +
   (|I|-1)\,\Big[  A_{I,k}^*\tt \p(B_J^*)
-
    (-1)^{(|I|+1)|J|} \, (\p(B_J^*)\tt A_{I,k}^* )\Big]
 \Bigg\}.\nonumber
  \end{align}
    
  \vskip .2cm 
  
\noindent $\bullet$  
For  all $K$, and $k=1,\dots,l-1$, where $K^c= \{i_1,\dots ,i_l\}$,  with $i_1<i_2<\dots  < i_l$,
\begin{align}\label{3S}
    \de &(A_{K,{i_k},{i_{k+1}}}^*) = 
    \nonumber \\
&\sum_{\substack{I,J,l \, : \, I\cap J=\emptyset \\
    \mathrm{ord}(I-\{l\},J)=K\\ l\in I; \,l\notin J; \, i_k,i_{k+1}\notin I\cup J}}  
    (-1)^{\varepsilon_l^I+1+|I|+|J|+\alpha(I-\{l\},J)}\,
\Big[ A_{I,{i_k},{i_{k+1}}}^*\tt  A_{J,l}^* - (-1)^{|I|(|J|+1)} A_{J,l}^*\tt A_{I,{i_k},{i_{k+1}}}^*\Big]
\nonumber\\
     + &
      \sum_{\substack{ I,J \, ,i,j\, : i\notin I, \,i\in J, \,
j\in I, \,j\notin J
\\ \, (I-\{j\})\cap (J-\{i\})=\emptyset \\
    \mathrm{ord}(I-\{j\},J-\{i \})=K}}  
(-1)^{\varepsilon_{i}^J+\,\varepsilon_{j}^I+ |I|+|J|\, \alpha(I-\{j\},J-\{i\})}\, 
\Big(A_{K, i_k, {i_{k+1}}}^*\Big)_\mu \Big(A_{\mathrm{ord}(I-\{j\},J-\{i\}),j,i}\Big)\, 
\\
& \qquad \qquad \qquad \qquad \qquad \qquad
\times \Big[A_{I,i }^* \tt  A_{J,j}^*- (-1)^{(|I|+1)(|J|+1)}  A_{J,j}^*\tt A_{I,i }^*
    \Big]\nonumber\\    
 + &  \sum_{\substack{ I,J \, ,i\, :  \,
i \notin I, i\in J,   \, I\cap J =\emptyset \\ 
\mathrm{ord}(I,J-\{i\})=K
}} \,\,
\sum_{j\notin  I\cup J} 
\left(\textstyle{\frac{(-1)^{ \varepsilon_{i}^J+ \, \alpha(I,J-\{i\})}}{|I|+|J|-n-1}}\right)\,
(A^*_{K,i_k,i_{k+1}})_\mu 
 ( A_{\mathrm{ord}(I,J-\{i\}),j,i})
 \nonumber\\
    &\qquad \times
\Bigg\{ (1-|I|)\,
\Big[ A_{I,i}^*\tt \p (B_J^*)
        - (-1)^{(|I|+1)|J|}(\p (B_J^*)\tt A_{I,i}^*)\Big]\,
\nonumber\\
& \qquad \qquad \qquad \qquad
+ (|J|-n)\,
\Big[ \p(A_{I,i}^*)\tt B_J^*
        - (-1)^{(|I|+1)|J|}( B_J^* \tt\p(A_{I,i}^*))\Big]
    \Bigg\}\nonumber
\\
+ & \sum_{\substack{I,J\, :\, I\cap J=\emptyset\\ \mathrm{ord}(I,J)=K}}\,  (-1)^{\alpha(I,J)}\,
 \Bigg\{ \!
  (n-|J|)
\Big[ \p (A_{I,{i_k},{i_{k+1}}}^*)\tt B_J^*
 -
    (-1)^{|I||J|}
B_J^*\tt \p (A_{I,{i_k},{i_{k+1}}}^*)
\Big]\nonumber\\
&  \qquad \qquad  \qquad \qquad\qquad + |I|
\Big[ A_{I,{i_k},{i_{k+1}}}^*\tt \p(B_J^*)
 -
    (-1)^{|I||J|}
\, \p(B_J^*)\tt A_{I,{i_k},{i_{k+1}}}^*
\Big]
\Bigg\}\nonumber\\
+ & \sum_{\substack{I,J\, :\, I\cap J=\emptyset\\ \mathrm{ord}(I,J)=K}}   \
\sum_{\substack{r\,:\, j_r\in J,  j_{r+1}\notin J  }}  \left(\textstyle{\frac{(-1)^{\al(I,J)}}{|I|+|J|-n}}\right)\,
(A^*_{K,i_k,i_{k+1}})_\mu  \Big( \sum_{j\notin I\cup J}
A_{\mathrm{ord}(I,J),j,{j_{r+1}}}\Big) \nonumber\\
 & \qquad \times \Bigg\{ \!
  (|J|-n)
\Big[ \p (A_{I,{j_r},{j_{r+1}}}^*)\tt B_J^*
 -
    (-1)^{|I||J|}
B_J^*\tt \p (A_{I,{j_r},{j_{r+1}}}^*)
\Big]\nonumber\\
&  \qquad \qquad  \qquad \qquad\qquad - |I|
\Big[ A_{I,{j_r},{j_{r+1}}}^*\tt \p(B_J^*)
 -
    (-1)^{|I||J|}
\, \p(B_J^*)\tt A_{I,{j_r},{j_{r+1}}}^*
\Big]
\Bigg\}\nonumber\\
- & \sum_{\substack{I,J\, :\, I\cap J=\emptyset\\ \mathrm{ord}(I,J)=K}}   \
\sum_{\substack{r\,:\, j_r\notin J,  j_{r+1}\in J  }}  \left(\textstyle{\frac{(-1)^{\al(I,J)}}{|I|+|J|-n}}\right)\,
(A^*_{K,i_k,i_{k+1}})_\mu  \Big( \sum_{j\notin I\cup J}
A_{\mathrm{ord}(I,J),j,{j_{r}}}\Big)\nonumber \\
 & \qquad \times \Bigg\{ \!
  (|J|-n)
\Big[ \p (A_{I,{j_r},{j_{r+1}}}^*)\tt B_J^*
 -
    (-1)^{|I||J|}
B_J^*\tt \p (A_{I,{j_r},{j_{r+1}}}^*)
\Big]\nonumber\\
&  \qquad \qquad  \qquad \qquad\qquad - |I|
\Big[ A_{I,{j_r},{j_{r+1}}}^*\tt \p(B_J^*)
 -
    (-1)^{|I||J|}
\, \p(B_J^*)\tt A_{I,{j_r},{j_{r+1}}}^*
\Big]
\Bigg\},\nonumber
\end{align}
where in the last sums we use the notation $ I^c=\{j_1,\dots,j_s\}$, with $j_1<\cdots <j_s$, where $I^c$ is the complement of $I$ in $\{1,\dots,n\}$.
\end{theorem}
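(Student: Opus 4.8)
The plan is to apply the explicit co-product formula (\ref{eq:De}) of Theorem \ref{prop:dual}(b) to the $\mathbb{C}[\partial]$-basis of $S_n$ given in (\ref{bbaassee}) (the $A_{I,i}$ and the consecutive-pair elements $A_{I,i_r,i_{r+1}}$) together with the elements $B_I$ with $|I|<n$, using as input the complete list of $\lambda$-brackets recorded in the preceding Proposition, supplemented by their skew-symmetric partners obtained from the skew-symmetry axiom. Concretely, (\ref{eq:De}) says that if $[{a_i}_\lambda a_j]=\sum_k P^{ij}_k(\lambda,\partial)\,a_k$ in the chosen basis, then $\delta(a_k^*)=\sum_{i,j}P^{ij}_k(\partial\otimes 1,\,-\partial\otimes 1-1\otimes\partial)\,a_i^*\otimes a_j^*$. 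Hence for each dual basis vector $f\in\{B_K^*,\ A_{K,k}^*,\ A_{K,i_k,i_{k+1}}^*\}$ I would (i) list all ordered pairs $(a_i,a_j)$ of basis elements whose bracket, expanded in the basis, has a nonzero component along the element dual to $f$; (ii) read off that component as a polynomial in $\lambda$ and $\partial$; and (iii) substitute $\lambda\mapsto\partial\otimes 1$, $\partial\mapsto-\partial\otimes 1-1\otimes\partial$ and sum over all such pairs.

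For $f=B_K^*$ the contributing brackets are exactly those whose expansion involves a $B$: the bracket $[{B_I}_\lambda B_J]$; the case $i\in J$ of $[{A_{I,i}}_\lambda B_J]$, which contributes the $B$-term with coefficient $\tfrac{|J|-n}{|I|+|J|-n-1}(-1)^{\varepsilon_i^J+\alpha(I,J-\{i\})}$; the cases $i\in J,\,k\notin J$ and $i\notin J,\,k\in J$ of $[{A_{I,i,k}}_\lambda B_J]$, which contribute $\pm\tfrac{|J|-n}{|I|+|J|-n}\,B_{IJ}$; and the skew-symmetric partners $[{B_J}_\lambda B_I]$, $[{B_J}_\lambda A_{I,i}]$, $[{B_J}_\lambda A_{I,i,k}]$. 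After extracting the coefficient of $B_K$ and performing the substitution, each term $a_i^*\otimes a_j^*$ is paired with the term $a_j^*\otimes a_i^*$ coming from the opposite ordering; skew-symmetry of the $\lambda$-bracket forces the latter coefficient to be the former one with sign $(-1)^{p(a_i)p(a_j)}$, which produces the bracketed differences $[\,\cdot\otimes\cdot-(-1)^{\cdots}\cdot\otimes\cdot\,]$ in (\ref{S1}); the identity (\ref{alfa}) is what allows $\alpha(I,J)$ and $\alpha(J,I)$ to be matched, and one relabels $i,I\leftrightarrow j,J$ to merge the two orientations into one sum. The same procedure with $f=A_{K,k}^*$ uses the brackets producing a one-index vector $\xi\,\partial_\bullet$ — the relevant cases of $[{A_{I,i,j}}_\lambda A_{J,r}]$ and $[{A_{I,i}}_\lambda A_{J,j}]$, and the case $i\notin I\cup J$ of $[{A_{I,i}}_\lambda B_J]$, plus skew-partners — and yields (\ref{S2}).

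For $f=A_{K,i_k,i_{k+1}}^*$ one collects the brackets producing two-index vectors $\xi\xi(\xi\partial-\xi\partial)$ or $B$'s: the remaining cases of $[{A_{I,i,j}}_\lambda A_{J,r}]$, $[{A_{I,i}}_\lambda A_{J,j}]$, $[{A_{I,i}}_\lambda B_J]$ and $[{A_{I,i,k}}_\lambda B_J]$, together with their skew partners. The point here is that, rather than re-expanding the sums $\sum_{j\notin I\cup J}A_{\dots,j,\bullet}$ appearing in those brackets into the consecutive-pair basis, one keeps the coefficients in the form of the pairings $(A^*_{K,i_k,i_{k+1}})_\mu(A_{\dots})$, exactly as written in (\ref{3S}); these pairings are nothing but the structure constants expressing one basis in terms of the other, so this is a legitimate closed form. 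The rational prefactors $\tfrac{|I|-1}{|I|+|J|-n-1}$, $\tfrac{|I|}{|I|+|J|-n}$ and their variants in (\ref{3S}) are inherited verbatim from the corresponding $\lambda$-brackets of the preceding Proposition after the substitution above.

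The main obstacle is entirely bookkeeping rather than conceptual: one must carry out a faithful case analysis of every $\lambda$-bracket and keep precise track of the parities $p(B_I)\equiv|I|$, $p(A_{I,i})\equiv|I|+1$, $p(A_{I,i,j})\equiv|I|\pmod 2$ that control all the skew-symmetry signs $(-1)^{|I||J|}$, $(-1)^{(|I|+1)|J|}$, $(-1)^{(|I|+1)(|J|+1)}$, of the combinatorial signs $\varepsilon_i^J$ and $\alpha(I,J)$ (using the reduction (\ref{alfa})), and of the substitution $\lambda\mapsto\partial\otimes 1$, $\partial\mapsto-\partial\otimes 1-1\otimes\partial$; the relabeling $i,I\leftrightarrow j,J$ that fuses the two orderings of each unordered pair of basis elements into a single skew-symmetrized summand is where sign mistakes are most likely. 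Since every bracket needed is already tabulated in the preceding Proposition, no further input is required.
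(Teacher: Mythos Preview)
Your proposal is correct and follows essentially the same route as the paper's proof: both apply the explicit formula (\ref{eq:De}) to the $\mathbb{C}[\partial]$-basis $\{B_I,\,A_{I,i},\,A_{I,i_r,i_{r+1}}\}$ of $S_n$, read off the structure constants from the $\lambda$-brackets tabulated in the preceding Proposition (together with their skew partners), perform the substitution $\lambda\mapsto\partial\otimes 1$, $\mu\mapsto-\partial\otimes 1-1\otimes\partial$, and then use the sign identity (\ref{alfa}) and the relabeling $i,I\leftrightarrow j,J$ to fuse each pair of orderings into a single skew-symmetrized summand. Your identification of the bookkeeping (parities, $\varepsilon_i^J$, $\alpha(I,J)$, and the undisplayed pairings $(A^*_{K,i_k,i_{k+1}})_\mu(A_{\dots})$ left as closed-form structure constants) matches exactly what the paper does.
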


\vskip .5cm

\begin{proof}
For all $K$ with $|K|<n$, and using (\ref{eq:De}), we have
\begin{align}\label{sss1}
 & \de(B_K^*) =
\sum_{\substack{I,J\, :\, I\cap J=\emptyset\\ \mathrm{ord}(I,J)=K}}\,  (-1)^{\alpha(I,J)}\,  \Big[ \la (2n-|I|-|J|)+\mu (n-|I|)\Big] (B_I^*\tt B_J^*)
\\
   & \quad \,  +
\sum_{\substack{I,J,\, i \, : \, i\in J, \, i\notin I  \\ I\cap (J-\{i\})=\emptyset \\
    \mathrm{ord}(I,J-\{i\})=K}}
\left(\textstyle{\frac{ |J|-n}{|I|+|J|-n-1}}\right)
 (-1)^{\varepsilon_i^J+\, \alpha(I,J-\{i\})} \Big[A_{I,i}^*\tt  B_J^*- (-1)^{(|I|+1)|J|}  B_J^*\tt A_{I,i}^*
    \Big]\nonumber
\\
   &\quad \,   +
\sum_{\substack{ I,J \, : \, 
    \mathrm{ord}(I,J)=K}} \ \ 
\sum_{\substack{ r \, : \, i_r\in J, \, i_{r+1}\notin J}}    
\left(\textstyle{\frac{ |J|-n}{|I|+|J|-n}}\right)
(-1)^{\alpha(I,J)} \Big[A_{I,i_r,i_{r+1}}^*\tt  B_J^*   - (-1)^{|I||J|}  B_J^*\tt A_{I,i_r,i_{r+1}}^*
    \Big]\nonumber
    \\
& \quad \, - 
\sum_{\substack{ I,J \, : \, 
    \mathrm{ord}(I,J)=K}} \ \ 
\sum_{\substack{ r \, : \, i_r\notin J, \, i_{r+1}\in J}}    
\left(\textstyle{\frac{ |J|-n}{|I|+|J|-n}}\right)
(-1)^{\alpha(I,J)} \Big[A_{I,i_r,i_{r+1}}^*\tt  B_J^*   - (-1)^{|I||J|}  B_J^*\tt A_{I,i_r,i_{r+1}}^*
    \Big]\nonumber    
\end{align}
where we must remember that we have to replace $
\la=\p\otimes 1$ and $\mu= -\p\otimes 1 - 1\otimes \p $. Observe that the last three terms in (\ref{sss1}) are the last three terms in (\ref{S1}). If we replace ${
\la=\p\otimes 1,\ \mu= -\p\otimes 1 - 1\otimes \p }$ in the first sum, we obtain:
\begin{equation*}
  \sum_{\substack{I,J\, :\, I\cap J=\emptyset\\ \mathrm{ord}(I,J)=K}}\,  (-1)^{\alpha(I,J)}\,  \Big[  (n-|J|)\, (\p (B_I^*)\tt B_J^*)- (n-|I|)\, (B_I^*\tt \p (B_J^*))\Big],
\end{equation*}
and, using (\ref{alfa}), it becomes equal to
\begin{equation*}
  \sum_{\substack{I,J\, :\, I\cap J=\emptyset\\ \mathrm{ord}(I,J)=K}}\,  (-1)^{\alpha(I,J)}\,  (n-|J|)\, \Big[ \p (B_I^*)\tt B_J^* - (-1)^{|I||J|}\, B_J^*\tt \p (B_I^*)\Big],
\end{equation*}
proving (\ref{S1}). Similarly, we have for all $K$, and $k\notin K$
\begin{align}\label{sss2}
    \de &(A_{K,k}^*) = \\
=&
\sum_{\substack{I,J \, : \, I\cap J=\emptyset \\
    \mathrm{ord}(I,J)=K\\ k\notin I\cup J, \,k=i_r}}  (-1)^{\alpha(I,J)}
\Bigg\{\,- A_{I,k,{i_{r+1}}}^*\tt  A_{J,k}^*
+
A_{I,{i_{r-1}},_k}^*\tt  A_{J,k}^*\nonumber\\
& \qquad \qquad \qquad \hskip 2cm +
    (-1)^{|I|(|J|+1)} \,
   \Big[A_{J,k}^*\tt A_{I,k,{i_{r+1}}}^*
 -
   A_{J,k}^* \tt A_{I,{i_{r-1}},_k}^*
\Big]
\Bigg\} \nonumber\\
    + &
      \sum_{\substack{I,J,\, i \, : \,i\in J, \,i\notin I, \, k\notin I\cup J \\ I\cap (J-\{i\})=\emptyset \\
    \mathrm{ord}(I,J-\{i\})=K}}  (-1)^{\varepsilon_i^J+\, \alpha(I,J-\{i\})} A_{I,i}^*\tt  A_{J,k}^*
    + 
      \sum_{\substack{I,J,\, j \, : \, j\in I, \,j\notin J, \, k\notin I\cup J \\ (I-\{j\})\cap J=\emptyset \\
    \mathrm{ord}(I-\{j\},J)=K}}  (-1)^{\varepsilon_j^I+|I|+\, \alpha(I-\{j\},J)} A_{I,k}^*\tt  A_{J,j}^*\nonumber\\
    + &
\sum_{\substack{I,J\, :\, I\cap J=\emptyset\\ \mathrm{ord}(I,J)=K}}\,  (-1)^{\alpha(I,J)}\, \Bigg\{
(-1)^{|J|} \Big[ \la\, (n-|J|+1-|I|) + \mu\, (1-|I|)\Big]
 (A_{I,k}^*\tt B_J^*)\nonumber\\
& \qquad -
    (-1)^{(|I|+1)|J|+|J|} \,
   \Big[ (-\la-\mu)\, (n-|J|+1-|I|) + \mu\, (1-|I|)\Big]
 (B_J^*\tt A_{I,k}^*)\Bigg\},\nonumber
  \end{align}
and evaluating $
\la=\p\otimes 1$ and $\mu= -\p\otimes 1 - 1\otimes \p $, we observe that the first  sum in (\ref{sss2}) is the first sum in (\ref{S2}).
 Using (\ref{alfa}), the second and third sums  in (\ref{sss2})  become the second sum  in (\ref{S2}).
If we replace ${
\la=\p\otimes 1,\ \mu= -\p\otimes 1 - 1\otimes \p }$ in the fourth sum  in (\ref{sss2}), we get:
\begin{align}
& \sum_{\substack{I,J\, :\, I\cap J=\emptyset\\ \mathrm{ord}(I,J)=K}}\,  (-1)^{\alpha(I,J)+|J|}\, \Bigg\{
\Big[ (n-|J|)\, (\p\otimes 1) - (1-|I|)\, (1\otimes \p) \Big]
 (A_{I,k}^*\tt B_J^*)\nonumber\\
& \qquad \qquad -
    (-1)^{(|I|+1)|J|} \,
   \Big[ (n-|J|)\,(1\otimes \p) - (1-|I|)\, (\p\otimes 1)\Big]
 (B_J^*\tt A_{I,k}^*)\Bigg\},\nonumber
  \end{align}
obtaining the last sum in (\ref{S2}). Now, using (\ref{eq:De}), for  all $K$, with $K^c= \{i_1,\dots ,i_l\}$, and $i_1<\dots i_k<i_{k+1}< i_l$, where $K^c$ is  the complement of $K$ in $\{1,\dots,n\}$, we have
\begin{align}\label{sss3}
      \de &(A_{K,{i_k},{i_{k+1}}}^*) = 
    \nonumber \\
&\sum_{\substack{I,J,l \, : \, I\cap J=\emptyset \\
    \mathrm{ord}(I-\{l\},J)=K\\ l\in I; \,l\notin J; \, i_k,i_{k+1}\notin I\cup J}}  
    (-1)^{\varepsilon_l^I+1+|I|+|J|+\alpha(I-\{l\},J)}\,
\Big[ A_{I,{i_k},{i_{k+1}}}^*\tt  A_{J,l}^* - (-1)^{|I|(|J|+1)} A_{J,l}^*\tt A_{I,{i_k},{i_{k+1}}}^*\Big]
\nonumber\\
    + &
      \sum_{\substack{ I,J \, ,i,j\, : i\notin I, \,i\in J, \,
j\in I, \,j\notin J}}  
(-1)^{\varepsilon_{i}^J+\,\varepsilon_{j}^I+ |I|+|J|\, \alpha(I-\{j\},J-\{i\})}\, 
\Big(A_{K, i_k, {i_{k+1}}}^*\Big)_\mu \Big(A_{\mathrm{ord}(I-\{j\},J-\{i\}),j,i}\Big)\, 
\\
& \qquad \qquad \qquad \qquad \qquad \qquad
\times \Big[A_{I,i }^* \tt  A_{J,j}^*- (-1)^{(|I|+1)(|J|+1)}  A_{J,j}^*\tt A_{I,i }^*
    \Big]\nonumber\\   
+ &
      \sum_{\substack{ I,J \, ,i\, :  \,
i \notin I, i\in J,  \, I\cap J =\emptyset 
}}
(A^*_{K,i_k,i_{k+1}})_\mu \Big(\p   \left(\textstyle{\frac{|I|-1 }{|I|+|J|-n-1}}\right)    (-1)^{\varepsilon_i^J+\al(I,J-\{i\})}  
 \sum_{j\notin  I\cup (J-\{i\})}  \ A_{\mathrm{ord}(I,J-\{i\}),j,i}\Big) \nonumber\\
& \qquad \qquad \qquad \qquad \qquad \qquad
 \times \, 
 \Big[
 A_{I,i}^*\tt  B_J^*- (-1)^{(|I|+1)\, |J|}B_J^* \tt  A_{I,i}^*
 \Big]
\nonumber\\
+ &
      \sum_{\substack{ I,J \, ,i\, :  \,
i \notin I, i\in J,   \, I\cap J =\emptyset 
}}
(A^*_{K,i_k,i_{k+1}})_\mu \Big(    (-1)^{\varepsilon_i^J+\al(I,J-\{i\})}  
 \sum_{j\notin  I\cup J}  \ A_{\mathrm{ord}(I,J-\{i\}),j,i} \Big) \nonumber\\
& \qquad \qquad \qquad \qquad \qquad \qquad
 \times \, 
 \Big[
 \la \, A_{I,i}^*\tt  B_J^*- (-1)^{(|I|+1)\, |J|}(-\la-\mu)B_J^* \tt  A_{I,i}^*
 \Big]
\nonumber\\
 + &
\sum_{\substack{I,J}}\,  
\sum_{\substack{r=1   }}^{s-1}
(A^*_{K,i_k,i_{k+1}})_\mu ([A_{I,j_r,j_{r+1}}\,_\la B_J]) 
  \Big[  A_{I,j_r,j_{r+1}}^*\tt B_J^*  -
    (-1)^{|I||J|} \,
 B_J^*\tt A_{I,j_r,j_{r+1}}^*\Big],\nonumber
  \end{align}
where in the last sum we use the notation $ I^c=\{j_1,\dots,j_s\}$, with $j_1<\cdots <j_s$, where $I^c$ is the complement of $I$ in $\{1,\dots,n\}$. Observe that the first    sum in (\ref{sss3}) is the first   sum in (\ref{3S}). The second    sum in (\ref{sss3}) produce  the second   sum in (\ref{3S}).
Now, observe that the double sums in the third and fourth terms in (\ref{sss3}), can be joined, since the term $j=i$ is zero in the third term. Therefore, we have
\begin{align}\label{previo}
&  \sum_{\substack{ I,J \, ,i\, :  \,
i \notin I, i\in J,   \, I\cap J =\emptyset \\ 
\mathrm{ord}(I,J-\{i\})=K
}}
(-1)^{\varepsilon_i^J+\al(I,J-\{i\})} 
\sum_{j\notin  I\cup J} 
(A^*_{K,i_k,i_{k+1}})_\mu 
 \Big(  A_{\mathrm{ord}(I,J-\{i\}),j,i}\Big)
\\
& \qquad \qquad \qquad \qquad \qquad \qquad
 \times \, \Big[p(\la,\mu)\, A_{I,i}^*\tt  B_J^*- (-1)^{(|I|+1)\, |J|}\,p(-\la-\mu,\mu)\, B_J^* \tt  A_{I,i}^*\Big],\nonumber
\end{align}
where $p(\la,\mu)=\mu   \left(\textstyle{\frac{|I|-1 }{|I|+|J|-n-1}}\right) \, + \, \la $, and replacing ${
\la=\p\otimes 1,\ \mu= -\p\otimes 1 - 1\otimes \p }$ in  (\ref{previo}), we get:
\begin{align*}
 &  \sum_{\substack{ I,J \, ,i\, :  \,
i \notin I, i\in J,   \, I\cap J =\emptyset \\ 
\mathrm{ord}(I,J-\{i\})=K
}} \,\,
\sum_{j\notin  I\cup J} 
\left(\textstyle{\frac{(-1)^{ \varepsilon_{i}^J+ \, \alpha(I,J-\{i\})}}{|I|+|J|-n-1}}\right)\,
(A^*_{K,i_k,i_{k+1}})_\mu 
 ( A_{\mathrm{ord}(I,J-\{i\}),j,i})
 \nonumber\\
    &\qquad \times
\Bigg\{ (1-|I|)\,
\Big[ A_{I,i}^*\tt \p (B_J^*)
        - (-1)^{(|I|+1)|J|}(\p (B_J^*)\tt A_{I,i}^*)\Big]\,
\nonumber\\
& \qquad \qquad \qquad \qquad
+ (|J|-n)\,
\Big[ \p(A_{I,i}^*)\tt B_J^*
        - (-1)^{(|I|+1)|J|}( B_J^* \tt\p(A_{I,i}^*))\Big]
    \Bigg\}\nonumber
\end{align*}
since
\begin{align*}
   \frac{(1-|I|)(\p\otimes 1 + 1\otimes \p ) +(|I|+|J|-n-1)(\p\otimes 1) }{|I|+|J|-n-1}=
 \frac{(1-|I|)( 1\otimes \p ) +(|J|-n)(\p\otimes 1) }{|I|+|J|-n-1},
\end{align*}
\noindent 
obtaining the third  sum in (\ref{3S}). 

Finally, in the fifth  sum  in (\ref{sss3}), we have to consider the three cases given by the formulas of the brackets $[A_{I,j_r,j_{r+1}}\,_\la B_J]$. 
 The first case gives us the following sum
\begin{align}\label{FL}
&
\sum_{\substack{I,J\, :\, I\cap J=\emptyset\\ \mathrm{ord}(I,J)=K}}\,  (-1)^{\alpha(I,J)}\, \Bigg\{
  \Big[ \la\, (n-|J|-|I|) - \mu\, |I|\Big]
 A_{I,{i_k},{i_{k+1}}}^*\tt B_J^* \\
& \qquad \qquad -
    (-1)^{|I||J|} \,
   \Big[ (-\la-\mu)\, (n-|J|-|I|) - \mu\, |I|\Big]
 B_J^*\tt A_{I,{i_k},{i_{k+1}}}^*\Bigg\},\nonumber
  \end{align}
and, using that
\begin{equation*}
     (n-|J|-|I|)(\p\otimes 1) -   |I|(-\p\otimes 1 - 1\otimes \p )=
(n-|J|)(\p\otimes 1) +   |I|(  1\otimes \p )
\end{equation*}
when we replace ${
\la=\p\otimes 1,\ \mu= -\p\otimes 1 - 1\otimes \p }$ in  (\ref{FL}), we get:
\begin{align*}
& \sum_{\substack{I,J\, :\, I\cap J=\emptyset\\ \mathrm{ord}(I,J)=K}}\,  (-1)^{\alpha(I,J)}\,
 \Bigg\{ \!
  (n-|J|)
\Big[ \p (A_{I,{i_k},{i_{k+1}}}^*)\tt B_J^*
 -
    (-1)^{|I||J|}
B_J^*\tt \p (A_{I,{i_k},{i_{k+1}}}^*)
\Big]\nonumber\\
&  \qquad \qquad  \qquad \qquad\qquad + |I|
\Big[ A_{I,{i_k},{i_{k+1}}}^*\tt \p(B_J^*)
 -
    (-1)^{|I||J|}
\, \p(B_J^*)\tt A_{I,{i_k},{i_{k+1}}}^*
\Big]
\Bigg\},\nonumber
\end{align*}
obtaining the fourth sum in (\ref{3S}).

The second case of the brackets $[A_{I,j_r,j_{r+1}}\,_\la B_J]$ corresponds to $j_r\in J,  j_{r+1}\notin J $, and 
 gives us the following sum
 \begin{align}\label{FI}
 &
\sum_{\substack{I,J\, :\, I\cap J=\emptyset\\ \mathrm{ord}(I,J)=K}}\,  
\sum_{\substack{r\,:\, j_r\in J,  j_{r+1}\notin J  }}  (-1)^{\al(I,J)}\,
(A^*_{K,i_k,i_{k+1}})_\mu  \Big( \sum_{j\notin I\cup J}
A_{\mathrm{ord}(I,J),j,{j_{r+1}}}\Big) \\
&\hskip 5cm \times \Big[  q(\la,\mu)\, A_{I,j_r,j_{r+1}}^*\tt B_J^*  -
    (-1)^{|I||J|} \,q(-\la-\mu,\mu)\,
 B_J^*\tt A_{I,j_r,j_{r+1}}^*\Big],\nonumber
  \end{align}
where we use the notation $ I^c=\{j_1,\dots,j_s\}$, with $j_1<\cdots <j_s$, where $I^c$ is the complement of $I$ in $\{1,\dots,n\}$, and 
 $q(\la,\mu)= \left(\la +\mu \, \textstyle{\frac{ |I|}{|I|+|J|-n}}\right)$. Therefore,   
 when we replace ${
\la=\p\otimes 1,\ \mu= -\p\otimes 1 - 1\otimes \p }$ in  (\ref{FI}), we get:
\begin{align*}
& \sum_{\substack{I,J\, :\, I\cap J=\emptyset\\ \mathrm{ord}(I,J)=K}}   \
\sum_{\substack{r\,:\, j_r\in J,  j_{r+1}\notin J  }}  \left(\textstyle{\frac{(-1)^{\al(I,J)}}{|I|+|J|-n}}\right)\,
(A^*_{K,i_k,i_{k+1}})_\mu  \Big( \sum_{j\notin I\cup J}
A_{\mathrm{ord}(I,J),j,{j_{r+1}}}\Big) \\
 & \qquad \times \Bigg\{ \!
  (|J|-n)
\Big[ \p (A_{I,{j_r},{j_{r+1}}}^*)\tt B_J^*
 -
    (-1)^{|I||J|}
B_J^*\tt \p (A_{I,{j_r},{j_{r+1}}}^*)
\Big]\nonumber\\
&  \qquad \qquad  \qquad \qquad\qquad - |I|
\Big[ A_{I,{j_r},{j_{r+1}}}^*\tt \p(B_J^*)
 -
    (-1)^{|I||J|}
\, \p(B_J^*)\tt A_{I,{j_r},{j_{r+1}}}^*
\Big]
\Bigg\},\nonumber
\end{align*}
obtaining the fifth sum in (\ref{3S}).
 
The third case of the brackets $[A_{I,j_r,j_{r+1}}\,_\la B_J]$ corresponds to $j_r\notin J,  j_{r+1}\in J $, and 
 gives us the following sum
 \begin{align}\label{FE}
 &
-\sum_{\substack{I,J\, :\, I\cap J=\emptyset\\ \mathrm{ord}(I,J)=K}}\,  
\sum_{\substack{r\,:\, j_r\notin J,  j_{r+1}\in J  }}  (-1)^{\al(I,J)}\,
(A^*_{K,i_k,i_{k+1}})_\mu  \Big( \sum_{j\notin I\cup J}
A_{\mathrm{ord}(I,J),j,{j_{r}}}\Big) \\
&\hskip 5cm \times \Big[  q(\la,\mu)\, A_{I,j_r,j_{r+1}}^*\tt B_J^*  -
    (-1)^{|I||J|} \,q(-\la-\mu,\mu)\,
 B_J^*\tt A_{I,j_r,j_{r+1}}^*\Big],\nonumber
  \end{align}
where we use the notation $ I^c=\{j_1,\dots,j_s\}$, with $j_1<\cdots <j_s$, where $I^c$ is the complement of $I$ in $\{1,\dots,n\}$, and 
 $q(\la,\mu)= \left(\la +\mu \, \textstyle{\frac{ |I|}{|I|+|J|-n}}\right)$. Therefore,   
 when we replace ${
\la=\p\otimes 1,\ \mu= -\p\otimes 1 - 1\otimes \p }$ in  (\ref{FE}), we get:
\begin{align*}
& -\sum_{\substack{I,J\, :\, I\cap J=\emptyset\\ \mathrm{ord}(I,J)=K}}   \
\sum_{\substack{r\,:\, j_r\notin J,  j_{r+1}\in J  }}  \left(\textstyle{\frac{(-1)^{\al(I,J)}}{|I|+|J|-n}}\right)\,
(A^*_{K,i_k,i_{k+1}})_\mu  \Big( \sum_{j\notin I\cup J}
A_{\mathrm{ord}(I,J),j,{j_{r}}}\Big) \\
 & \qquad \times \Bigg\{ \!
  (|J|-n)
\Big[ \p (A_{I,{j_r},{j_{r+1}}}^*)\tt B_J^*
 -
    (-1)^{|I||J|}
B_J^*\tt \p (A_{I,{j_r},{j_{r+1}}}^*)
\Big]\nonumber\\
&  \qquad \qquad  \qquad \qquad\qquad - |I|
\Big[ A_{I,{j_r},{j_{r+1}}}^*\tt \p(B_J^*)
 -
    (-1)^{|I||J|}
\, \p(B_J^*)\tt A_{I,{j_r},{j_{r+1}}}^*
\Big]
\Bigg\},\nonumber
\end{align*}
obtaining the last sum in (\ref{3S}).

\end{proof}

\

\subsection{Differential Lie   supercoalgebra $(K_n)^{*_c}$}\lbb{Kn}

Recall from (\ref{eq:k1}) that $K_n=\CC[\p]\otimes \Lambda(n)$. As usual, we denote by $\xi_I^*$  the elements in the dual basis of $\Lambda(n)$.  In order to apply (\ref{eq:De}), we need to compute the $\la\,$-brackets (\ref{eq:k2}) in $K_n$:
\begin{equation*}
[{\xi_I}_\la \xi_J]=\bigg( (|I|-2)\p (\xi_I\xi_J) + (-1)^{|I|} \sum_{i=1}^n (\p_i \xi_I)(\p_i
\xi_J)\bigg) + \lambda \, (|I|+|J|-4)\, \xi_I \xi_J.
\end{equation*}
Now, using the notations introduced in (\ref{epsi}) and (\ref{WW1}), we can write the co-product in $(K_n)^\c$.

\begin{theorem}
  The Lie supercoalgebra structure on $(K_n)^\c$ is defined by the following formula, in the dual basis:

  \begin{align}\label{K1}
    \de (\xi_K^*) &= \sum_{\substack{I,J \, : \, I\cap J=\emptyset \\
    \mathrm{ord}(I,J)=K}}  (-1)^{\alpha(I,J)}\, (|J|-2)\,  \Big[\, \d \, \xi_I^*\tt \xi_J^*-
    (-1)^{|I|\,|J|} \,
\xi_J^*\tt \d \, \xi_I^*\Big] \\
     &
     \ \ \ \ +   \sum_{\substack{i=1\dots n \\ I,J \, : \, \{i\}=I\cap J \\
    \mathrm{ord}(I-\{i\},J-\{i\})=K}}  (-1)^{|I|+\varepsilon_i^J+\varepsilon_j^I+\alpha(I-\{i\},J-\{i\})}\, \xi_I^*\tt \xi_J^*.
     \nonumber
  \end{align}
\end{theorem}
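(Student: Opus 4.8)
The plan is to follow the same route as the proofs for $(W_n)^\c$ and $(S_n)^\c$: start from the universal co-product formula (\ref{eq:De}), substitute the $\la$-bracket of $K_n$ written in the monomial basis $\{\xi_I\}$, apply the dual functional $\xi_K^*$, and finally set $\la=\p\otimes 1$, $\mu=-\p\otimes 1-1\otimes\p$.

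First I would express the three summands of $[{\xi_I}_\la\xi_J]$ in the basis. By (\ref{WW1}), the terms $(|I|-2)\p(\xi_I\xi_J)$ and $\la(|I|+|J|-4)\xi_I\xi_J$ are both supported on pairs with $I\cap J=\emptyset$, where $\xi_I\xi_J=(-1)^{\alpha(I,J)}\xi_{\mathrm{ord}(I,J)}$. For the middle term $(-1)^{|I|}\sum_i(\p_i\xi_I)(\p_i\xi_J)$ I would observe that $(\p_i\xi_I)(\p_i\xi_J)=(-1)^{\varepsilon_i^I+\varepsilon_i^J}\xi_{I-\{i\}}\xi_{J-\{i\}}$ vanishes unless $i\in I\cap J$ and $(I-\{i\})\cap(J-\{i\})=\emptyset$, i.e. unless $I\cap J=\{i\}$ is a singleton, in which case it equals $(-1)^{\varepsilon_i^I+\varepsilon_i^J+\alpha(I-\{i\},J-\{i\})}\xi_{\mathrm{ord}(I-\{i\},J-\{i\})}$.

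Next I would apply $(\xi_K^*)_\mu$, using $(\xi_K^*)_\mu(\xi_L)=\tilde\delta_{K,L}$ and $(\xi_K^*)_\mu(\p b)=\mu\,(\xi_K^*)_\mu(b)$, and group the contributions. The two terms supported on $\{I\cap J=\emptyset,\ \mathrm{ord}(I,J)=K\}$ assemble into the coefficient $(-1)^{\alpha(I,J)}\big[(|I|-2)\mu+(|I|+|J|-4)\la\big]$ of $\xi_I^*\tt\xi_J^*$; substituting $\la=\p\otimes1$, $\mu=-\p\otimes1-1\otimes\p$ turns the bracket into $(|J|-2)(\p\otimes1)-(|I|-2)(1\otimes\p)$, that is $(|J|-2)\,\p\xi_I^*\tt\xi_J^*-(|I|-2)\,\xi_I^*\tt\p\xi_J^*$. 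Relabelling $I\leftrightarrow J$ in the second half (the index set is symmetric and $\mathrm{ord}(I,J)=\mathrm{ord}(J,I)$) and invoking (\ref{alfa}) to rewrite $(-1)^{\alpha(J,I)}=(-1)^{\alpha(I,J)+|I||J|}$ collapses this into the skew-symmetrized first sum of (\ref{K1}). The singleton-intersection term carries no $\mu$, hence is unaffected by the substitution and directly yields the second sum of (\ref{K1}).

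The computation is routine sign-bookkeeping; the only points demanding care are identifying that the $\sum_i(\p_i\xi_I)(\p_i\xi_J)$ summand forces $I\cap J$ to be a singleton, and the application of (\ref{alfa}) together with $\mathrm{ord}(I,J)=\mathrm{ord}(J,I)$ to merge the two halves of the first sum into the explicitly skew form appearing in (\ref{K1}). No genuine obstacle is expected beyond this bookkeeping.
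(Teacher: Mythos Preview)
Your proposal is correct and follows essentially the same route as the paper's own proof: apply (\ref{eq:De}), split $[{\xi_I}_\la\xi_J]$ into its three pieces, combine the $(|I|-2)\p$ and $\la(|I|+|J|-4)$ terms after the substitution $\la=\p\otimes1$, $\mu=-\p\otimes1-1\otimes\p$ into $(|J|-2)(\p\otimes1)-(|I|-2)(1\otimes\p)$, and then swap $I\leftrightarrow J$ in one half using (\ref{alfa}) to obtain the skew form in (\ref{K1}). The middle term is handled identically.
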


\vskip .1cm

\begin{proof} We apply (\ref{eq:De}), in order to compute the co-product:
  \begin{align*}
    \de (\xi_K^*)
     &=\bigg\{\sum_{I,J} (\xi_K^*)_\mu \big([{\xi_I}_\la \xi_J]\big) (\xi_I^*\tt \xi_J^*)
        \bigg\}_{|_{\,_{\substack{\la=\p\otimes 1\qquad\ \\ \mu= -\p\otimes 1 - 1\otimes \p}}}}
          \\
     &= \Bigg\{\sum_{\substack{I,J \, : \, I\cap J=\emptyset }}  (-1)^{\alpha(I,J)}\, (|I|-2)\, (\xi_K^*)_\mu \big(\p \,\xi_{\mathrm{ord}(I,J)}\big)
    ( \xi_I^*\tt \xi_J^*) \\
     &
     +   \sum_{\substack{i=1\dots n \\ I,J \, : \, \{i\}=I\cap J }}  (-1)^{|I|+\varepsilon_i^J+\varepsilon_j^I}\,(\xi_K^*)_\mu \big(\xi_{ I-\{i\}}\xi_{J-\{i\}}\big) (\xi_I^*\tt \xi_J^*)\\
     &+\,\la\, \sum_{\substack{I,J \, : \, I\cap J=\emptyset}}  (|I|+|J|-4) \, (-1)^{\alpha(I,J)}\,  (\xi_K^*)_\mu \big( \,\xi_{\mathrm{ord}(I,J)}\big)
    ( \xi_I^*\tt \xi_J^*)\Bigg\}_{|_{\,_{\substack{\la=\p\otimes 1\qquad\ \\ \mu= -\p\otimes 1 - 1\otimes \p}}}}
     \nonumber
  \end{align*}
\vskip .2cm

\noindent Joining the first and third sums and using that
\begin{equation*}
  \Big[\, \mu\, (|I|-2) \, + \, \la\, (|I|+|J|-4)\, \Big]|_{\,_{\substack{\la=\p\otimes 1\qquad\ \\ \mu= -\p\otimes 1 - 1\otimes \p}}}=(|J|-2) (\p\tt 1)-(|I|-2) (1\tt\p),
\end{equation*}
we obtain
\begin{align*}
    \de (\xi_K^*) &= \sum_{\substack{I,J \, : \, I\cap J=\emptyset \\
    \mathrm{ord}(I,J)=K}}  (-1)^{\alpha(I,J)}\, \Big[\, (|J|-2)\,  (\, \d \, \xi_I^*\tt \xi_J^*) - (|I|-2) \,
(\xi_I^*\tt \d \, \xi_J^*)\,\Big] \\
     &
     +   \sum_{\substack{i=1\dots n \\ I,J \, : \, \{i\}=I\cap J \\
    \mathrm{ord}(I-\{i\},J-\{i\})=K}}  (-1)^{|I|+\varepsilon_i^J+\varepsilon_j^I+\alpha(I-\{i\},J-\{i\})}\, \xi_I^*\tt \xi_J^*.
     \nonumber
  \end{align*}
Now, interchanging the sum over $I$ and $J$ in the first sum, and using (\ref{alfa}), we obtain (\ref{K1}), finishing the proof.
\end{proof}

\subsection{Differential Lie   supercoalgebra $(K_4')^{*_c}$}\lbb{K4}

We need the following general notation:
\begin{equation}\label{not}
 \xi_\star:=\xi_1\dots\xi_n.
\end{equation}
In general, the element $\xi_{I^c}$ will be considered with the set $I^c$ already ordered, where $I^c$ is the complement of $I$ in $\{1,\dots,n\}$.

Recall that the derived algebra $K_4'$ is simple
and has codimension 1 in $K_4$. More precisely,
\begin{equation*}
  K_4 = K_4' \oplus \CC \, \xi_\star.
\end{equation*}
Therefore,  a $\cp$-basis of $K_4'$ is given by
\begin{equation*}
  \{\xi_I\, :\, |I|\leq 3\}\cup \{\p \xi_\star\},
\end{equation*}
and the remaining $\la$-brackets are $[\,{\p \xi_\star\,}_\la \,\p \xi_\star\,]=0$ and
\begin{equation}\label{equa.b}
[\,{\p \xi_\star\,}_\la \,\xi_J\,]=
\left\{
  \begin{array}{ll}
     - 2\,\la\, \p \xi_\star, & \hbox{ if $|J|=0$;} \\
    - \la \,  \p_j \, \xi_\star , & \hbox{ if $J=\{j\}$;} \\
    \qquad 0\quad , & \hbox{ if $|J|=2,3$.}
  \end{array}
\right.
\end{equation}

\noindent Now, using the notations introduced in (\ref{epsi}), we can write the co-product in $(K_4')^\c$.

\begin{theorem}
  The Lie supercoalgebra structure on $(K_4')^\c$ is defined by the following formulas, in the dual basis:
\begin{equation}\label{K4'-2}
  \de((\p\xi_\star)^*)  = - 2 \, \big( \p \,(\p \xi_\star)^*\tt 1^*
- 1^*\tt \p \,(\p \xi_\star)^*\big)
- \sum_{i=1}^4\, (-1)^{\varepsilon_i^{\{i\}^c}}\, \big(\xi_{\{i\}^c}^*\tt \xi_i^* + \xi_i^*\tt \xi_{\{i\}^c}^* \big)
,
\end{equation}

\noindent and
  \begin{align}\label{K4'}
    \de (\xi_K^*)\, &= \, \delta_{K_4}(\xi_K^*)\, + \, \tilde\delta_{|K|,3}\  (-1)^{\varepsilon^{\{1,2,3,4\}}_{K^c}} \Big[\xi_{K^c}^*\tt \p \,(\p \xi_\star)^*
-\p \,(\p \xi_\star)^*\tt \xi_{K^c}^*\Big],
  \end{align}
\vskip .2cm

\noindent where $\tilde\delta_{|K|,3}$ is the Kronecker delta and $\delta_{K_4}(\xi_K^*)$ corresponds to the coproduct in $(K_4)^{*_c}$ given by the equations (\ref{eq.k0}), (\ref{eq.k1}),(\ref{eq.k2}) and (\ref{444}), where in (\ref{444}) we have to remove the terms with $\xi_\star^*$  (see Subsection \ref{K2}).
\end{theorem}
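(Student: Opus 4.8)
The plan is to apply the formula (\ref{eq:De}) of Theorem~\ref{prop:dual}(b) directly, using the $\cp$-basis $\{\xi_I : |I|\le 3\}\cup\{\p\xi_\star\}$ of $K_4'$. For this I first need all $\lambda$-brackets of these basis elements, written back in this basis. Among the $\xi_I$ with $|I|\le 3$ they are given by the same formula (\ref{eq:k2}) as in $K_n$, with the following observation: when $\xi_I\xi_J=\pm\xi_\star$, i.e. when $I\cap J=\emptyset$ and $|I|+|J|=4$, the $\lambda$-linear term $\lambda(|I|+|J|-4)\xi_I\xi_J$ vanishes and the surviving term $(|I|-2)\,\p(\xi_I\xi_J)$ is a multiple of $\p\xi_\star\in K_4'$, while the term $(-1)^{|I|}\sum_i(\p_i\xi_I)(\p_i\xi_J)$ has degree $|I|+|J|-2=2$ and so is an ordinary $\xi_L$ with $|L|=2$. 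The remaining brackets are $[(\p\xi_\star)_\lambda(\p\xi_\star)]=0$ and the ones listed in (\ref{equa.b}).

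\textbf{Computation of $\delta((\p\xi_\star)^*)$.} By (\ref{eq:De}), this only involves the basis pairs $(a,b)$ for which $[a_\lambda b]$ has a nonzero $\p\xi_\star$-component. From (\ref{equa.b}) with $|J|=0$ this is the pair $(\p\xi_\star,1)$, with coefficient $-2\lambda$, together with its skew-symmetric companion $(1,\p\xi_\star)$; from the case $\xi_I\xi_J=\pm\xi_\star$ it is the pairs $(\xi_I,\xi_J)$ with $\{|I|,|J|\}=\{1,3\}$, the coefficient of $\p\xi_\star$ being $(|I|-2)$ times the sign coming from (\ref{WW1}) (the pair with $|I|=|J|=2$ drops out because $|I|-2=0$, and degree~$4$ factors are excluded since $\xi_\star\notin K_4'$). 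Substituting $\lambda=\p\otimes 1$, $\mu=-\p\otimes 1-1\otimes\p$ (equivalently, passing to $Q^{ij}_k(x,y)=P^{ij}_k(x,-x-y)$) and rewriting the signs via (\ref{alfa}) and (\ref{WW1}) yields exactly (\ref{K4'-2}).

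\textbf{Computation of $\delta(\xi_K^*)$ for $|K|\le 3$.} In the sum (\ref{eq:De}) I separate the pairs $(a,b)$ with $a,b\in\{\xi_I:|I|\le 3\}$ from those in which one of $a,b$ equals $\p\xi_\star$ (the pair $(\p\xi_\star,\p\xi_\star)$ contributes nothing). For the first group, the coefficient of $\xi_K$ with $|K|\le 3$ in $[\xi_I{}_\lambda\xi_J]$ is the \emph{same} polynomial in $\lambda,\p$ as the corresponding coefficient computed inside $K_4$: indeed the $\xi_\star$/$\p\xi_\star$-part of the bracket only affects degree-$4$ components, which are killed by $\xi_K^*$ for $|K|\le 3$. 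Hence this group reproduces precisely the coproduct $\delta_{K_4}(\xi_K^*)$ of $(K_4)^{*_c}$ with all $\xi_\star^*$-terms deleted, those deleted terms being exactly the ones coming from the pairs of $\delta_{K_4}$ that involve the generator $\xi_\star$ (absent here). For the second group, (\ref{equa.b}) shows that $[(\p\xi_\star)_\lambda\xi_J]$ has a component among the $\xi_I$ with $|I|\le 3$ only when $|J|=1$, namely $-\lambda\,\p_j\xi_\star=-\lambda(-1)^{\varepsilon_j^{\{1,2,3,4\}}}\xi_{\{j\}^c}$; thus it contributes to $\delta(\xi_K^*)$ only when $|K|=3$ and $K=\{j\}^c$. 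Together with the skew-symmetric pair $(\xi_j,\p\xi_\star)$ and the substitution $\lambda=\p\otimes 1$, $\mu=-\p\otimes 1-1\otimes\p$, this produces exactly the correction term $\tilde\delta_{|K|,3}\,(-1)^{\varepsilon^{\{1,2,3,4\}}_{K^c}}\big[\xi_{K^c}^*\tt\p(\p\xi_\star)^*-\p(\p\xi_\star)^*\tt\xi_{K^c}^*\big]$ of (\ref{K4'}). Assembling the two groups gives (\ref{K4'}).

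\textbf{Main obstacle.} The routine part is straightforward but lengthy; the delicate point is the bookkeeping that identifies the first-group contribution with ``$\delta_{K_4}(\xi_K^*)$ minus its $\xi_\star^*$-terms''. One must verify that replacing the generator $\xi_\star$ of $K_4$ by $\p\xi_\star$ in $K_4'$ leaves the $\xi_K$-coefficients with $|K|\le 3$ completely unchanged, and that the brackets $[\xi_I{}_\lambda\xi_J]$ with $|I|+|J|=4$ never produce a $\xi_K$-term with $|K|\le 3$ — so that the only genuinely new contributions are those coming from the pairs involving $\p\xi_\star$, which occur only for $|K|=3$. The consistency of the various signs $(-1)^{\alpha(I,J)}$, $(-1)^{\varepsilon_i^J}$ and the ad hoc exponents $\varepsilon^{\{1,2,3,4\}}_{K^c}$ is then the remaining nuisance, handled exactly as in the previous subsections by means of (\ref{alfa}) and (\ref{WW1}).
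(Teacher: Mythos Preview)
Your proposal is correct and follows essentially the same approach as the paper: split the sum in (\ref{eq:De}) according to whether the basis elements are among the $\xi_I$ with $|I|\le 3$ or equal to $\p\xi_\star$, identify the first block with $\delta_{K_4}(\xi_K^*)$ stripped of its $\xi_\star^*$-terms, and read off the remaining contributions from (\ref{equa.b}). Your justification is in fact more detailed than the paper's (e.g.\ you explicitly explain why $|I|=|J|=2$ drops out of $\delta((\p\xi_\star)^*)$); the only blemish is the sentence in your ``Main obstacle'' claiming that brackets with $|I|+|J|=4$ never produce a $\xi_K$-term with $|K|\le 3$, which is false unless you add $I\cap J=\emptyset$ --- but this claim is not actually needed, since the identification with $\delta_{K_4}$ minus its $\xi_\star^*$-terms follows directly from the fact that $[\xi_I{}_\lambda\xi_J]$ is computed identically in $K_4$ and $K_4'$ for $|I|,|J|\le 3$.
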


\begin{proof} We apply (\ref{eq:De}), in order to compute the co-product. For any $f\in (K_4')^{*_c}$, we have:
  \begin{align}\label{equa.a}
    \de (f)
     &=\left[\sum_{ |I|,|J|\leq 3} f_\mu \big([{\xi_I}_\la \xi_J]\big) (\xi_I^*\tt \xi_J^*)
     + \sum_{|J|\leq 3} f_\mu \big([{\p \xi_\star}_\la \xi_J]\big) ((\p \xi_\star)^*\tt \xi_J^*) \right.\nonumber\\
     &\qquad \left.
     + \sum_{|J|\leq 3} f_\mu \big([{\xi_J}_\la\p \xi_\star]\big) (\xi_J^*\tt (\p \xi_\star)^*) \right] |_{\substack{\la=\p\otimes 1\qquad\ \\ \mu= -\p\otimes 1 - 1\otimes \p}}
\end{align}

\noindent First, if $f=\xi_K^*$ with $|K|\leq 3$, then the first sum in (\ref{equa.a}) corresponds to $\delta_{K_4}(\xi_K^*)$, the coproduct in $(K_4)^{*_c}$ given by the equations (\ref{eq.k0}), (\ref{eq.k1}),(\ref{eq.k2}) and (\ref{444}), where in (\ref{444}) we have to remove the terms with $\xi_\star^*$  (see Subsection \ref{K2}). Now, replacing the $\la$-brackets (\ref{equa.b}) into the last two sums in (\ref{equa.a}), we obtain (\ref{K4'}).

Second, if $f=(\p \xi_\star)^*$, then in the first sum of (\ref{equa.a}), we only have to consider the cases $|I|=1, \,|J|=3$ or $|I|=3, \,|J|=1$. These two cases produce the last sum in (\ref{K4'-2}). Finally,
 using (\ref{equa.b}), we only have to consider $|J|=0$  in the other two sums in (\ref{equa.a}),  obtaining the first term in (\ref{K4'-2}), finishing the proof.
\end{proof}

\subsection{Differential Lie   supercoalgebra $(CK_6)^{*_c}$}\lbb{CK6}

We need the following notation:
\begin{equation*}
  \xi_{i_1 \dots i_r}:=\xi_{i_1}\dots \xi_{i_r}
\end{equation*}
and for  a monomial $f=\xi_{i_1} \dots \xi_{i_r}\in \La (n)$, we let ${f}^{\, \bullet}$ be its
Hodge dual, i.e. the unique monomial in $\La (n)$ (with a sign) such that
${f} f^{\, \bullet}= \xi_1 \dots \xi_n$. It is easy to see that
\begin{equation}\label{bull}
\xi_I^\bullet=(-1)^{\alpha(I,I^c)}\, \xi_{I^c},
\end{equation}
where $I^c$ is ordered, and $I^c$ is the complement of $I$ in $\{1,\dots,n\}$.

Recall that the Lie conformal superalgebra $CK_6$ is defined as the subalgebra
of $K_6$ given by
\begin{equation*}
    CK_6=\cp \hbox{-span}\  \{f+\beta(-1)^{\frac{|f|(|f|+1)}{2}}(-\p)^{3-|f|}
    \, f^{\, \bullet}\, : \, f\in \La (6),\  0\leq |f|\leq 3\}.
\end{equation*}
\vskip .1cm

\noindent where $\beta^2=-1$.

Therefore, we consider the following generators:
\begin{align*}
  L \,\,&= -\tfrac{1}{2} (1-\beta\p^3\xi_\star) \\
  C_i \,\,&= \,\ \xi_i -\be \, \p^2 \xi_i^\bullet,  \\
  C_{ij}\, &= \,\xi_{ij} + \be \, \p \, \xi_{ij}^\bullet, \\
  C_{ijk} &= \,\xi_{ijk} + \be \,  \xi_{ijk}^\bullet.
\end{align*}

\vskip .2cm

\

The $\la$-brackets are given by (cf. \cite{CK2}):
\begin{align}\label{CK6-b1}
  [L_\la L]\,\, =& \, \,  (2\la +\p)\, L \qquad\qquad
[L_\la C_{ij}]\, = \,\,  (\tfrac{3}{2}\la +\p)\, C_{ij} \\
\label{CK6-b11}
  [L_\la C_i] \,\,=& \,\,  (\la +\p)\, C_i \qquad\qquad
  [L_\la C_{ijk}] = \, \, (\tfrac{1}{2}\la +\p)\, C_{ijk},
\end{align}
together with
\begin{align}\label{CK6-b21}
  [\,{C_i\,}_\la C_j\,]\,\, =& \, \,  -(2\la +\p)\, C_{ij} \, +\, 2 \, \tilde{\delta}_{i,j}\, L \\
[\,{C_{ij}\,}_\la C_k\,] \,\,=& \,\,  -\la \, C_{ijk}\, +\, \tilde{\delta}_{k,i} \, C_j \, -\, \tilde{\delta}_{k,j} \, C_i , \label{CK6-b22}
\end{align}
and
\begin{align}\label{CK6-b31}
  [\,{C_{ij}\,}_\la C_{kl}\,]\,  \, =& \ \  \tilde{\delta}_{i,k} \, C_{jl} \, -\, \tilde{\delta}_{i,l} \, C_{jk} 
   -\tilde{\delta}_{j,k} \, C_{il} \, + \, \tilde{\delta}_{j,l} \, C_{ik}\\
\label{CK6-b32}
  [\,{C_{ij}\,}_\la C_{klm}\,]\, \, =& \ \  \tilde{\delta}_{i,k} \, C_{jlm} \, -\, \tilde{\delta}_{i,l} \, C_{jkm}
\, +\, \tilde{\delta}_{i,m} \, C_{jkl} \, -\, \tilde{\delta}_{j,k} \, C_{ilm}
\, +\, \tilde{\delta}_{j,l} \, C_{ikm} \, -\, \tilde{\delta}_{j,m} \, C_{ikl}
\\
  [\,{C_{ijk}\,}_\la C_{lmn}\,]\, \, =& \ \ 0.\nonumber
\end{align}
Finally
\begin{equation}\label{CK6-b4}
  [\,{C_{i}\,}_\la C_{jkl}\,]\, \, = \ \ \beta \, (\xi_{ijkl}^\bullet + \beta\,\p \, \xi_{ijkl})\, -\, \left((\xi_i(\xi_{jkl}^\bullet))^\bullet + \beta \,\p\, \xi_i(\xi_{jkl}^\bullet)\right),
\end{equation}
where $\tilde{\de}$ is the Kronecker delta.
\

We have that the set 
\begin{equation*}
  \{L\}\,\cup\,\{C_i \,|\, i=1\dots 6\}\,\cup\,\{C_{ij}\,|\,i<j\}\,\cup\,\{C_{1jk}\,|\,1<j<k\}
\end{equation*}
form a basis of
$CK_6$ as a $\cp$-module.

\begin{theorem}
  The Lie supercoalgebra structure on $(CK_6)^\c$ is defined by the following formulas, in the dual basis:
\begin{align}\label{CK6-1}
  \de(L^*)  &=   \, \p L^*\tt L^*
- L^*\tt \p L^* \, +\,
 \sum_{i=1}^6 \, 2\, \big(C_i^*\tt C_i^* \big)
,\\
\de(C_l^*)  &=   \,  \p\, C_l^*\tt L^*
- L^*\tt \p \, C_l^* \, +\,
 \sum_{k<l} \,  \big(C_{kl}^*\tt C_k^* - C_k^*\tt C_{kl}^* \big) +
 \sum_{l<k} \,  \big(C_{lk}^*\tt C_k^* - C_k^*\tt C_{lk}^* \big).
\nonumber
\end{align}

\noindent For $1<s<t$, we consider $I:=\{1,s,t\}$ and $\{a,b,c\}:=I^c$, with $a<b<c$, where $I^c$ is the complement of $I$ in $\{1,\dots,6\}$. Then, we have:

  \begin{align}\label{CK6-2}
 \de(&C_{1st}^*)=
\big(\p C_{1st}^*\tt L^* - L^*\tt \p  C_{1st}^* \big)  +
 \tfrac{1}{2}\big(C_{1st}^*\tt \p L^*- \p L^*\tt  \, C_{1st}^*\big) \\
&
 - \!
\big(\p C_{rs}^*\tt  C_t^*- C_t^*\tt  \p  C_{rs}^*\big)
\! + \!
\big(\p C_{rt}^*\tt  C_s^*- C_s^*\tt  \p  C_{rt}^*\big)
\! - \!
\big(\p C_{st}^*\tt  C_r^*- C_r^*\tt  \p  C_{st}^*\big)\quad \nonumber\\
 &
   \hbox{\small $
  \,  +\be \,(-1)^{\al(I^c,I)} \Big[
        (C_{1a}^* \tt C_{1bc}^* - C_{1bc}^*\tt  C_{1a}^*) +
        (C_{1b}^* \tt C_{1ac}^* - C_{1ac}^*\tt  C_{1b}^*) +
        (C_{1c}^* \tt C_{1ab}^* - C_{1ab}^*\tt  C_{1c}^*)\Big]$}
\nonumber\\
&     + \sum_{1<i<s}\, \Big[
        (C_{is}^* \tt C_{1it}^* - C_{1it}^*\tt  C_{is}^*) -
        (C_{it}^* \tt C_{1is}^* - C_{1is}^*\tt  C_{it}^*)\Big]
\nonumber
\\
&    + \sum_{s<i<t} \! \Big[
         -\!
        (C_{si}^* \tt C_{1it}^* \!-\! C_{1it}^*\tt  C_{si}^*)
 \! +\!
        (C_{it}^* \tt C_{1si}^* \!-\! C_{1si}^*\tt  C_{it}^*)\Big]
\nonumber
\\
&   +
 \sum_{t<i}\,
        \Big[
        (C_{si}^* \tt C_{1ti}^* - C_{1ti}^*\tt  C_{si}^*)
           - (C_{ti}^* \tt C_{1si}^* - C_{1si}^*\tt  C_{ti}^*)
        \Big].
        \nonumber
\end{align}

\vskip .2cm

\noindent And for $r<s$:

  \begin{align}\label{CK6-3}
\de(C_{rs}^*)  &=   \,  \big( \p C_{rs}^*\tt L^*
- L^*\tt \p  C_{rs}^*\big) \, -
\, \tfrac{1}{2}\big(C_{rs}^*\tt  \,\p L^*- \p L^* \tt C_{rs}^*\big)
+
\big( C_{r}^*\tt \p C_s^*+ \p C_s^*\tt    C_{r}^*\big)
\nonumber
\\
& \ \ \ \ -
\big(\p  C_{r}^*\tt  C_s^*+  C_s^*\tt  \p  C_{r}^*\big)
 + \sum_{i<r} \,  \big(C_{ir}^*\tt C_{is}^* - C_{is}^*\tt C_{ir}^* \big)
\nonumber
\\
& \ \ \ \
-
 \sum_{r<i<s}   \big(C_{ri}^*\tt C_{is}^* - C_{is}^*\tt C_{ri}^* \big)
+ \sum_{s<i} \,  \big(C_{ri}^*\tt C_{si}^* - C_{si}^*\tt C_{ri}^* \big) \\
& \ \ \ \  +
\sum_{i=1}^{6}\sum_{\substack{1<k<l\\ \{i,1,k,l\}^c=\{r,s\}}}
 \beta\, (-1)^{\alpha(\{i\},\{1,k,l\})+\alpha(\mathrm{ord}(\{i\},\{1,k,l\}),\{r,s\})}
\big(C_{i}^*\tt C_{1kl}^* + C_{1kl}^*\tt C_{i}^* \big) \nonumber\\
& \ \ \ \
- \tilde{\delta}_{1<r} \, (-1)^A\,  \big(C_{1}^*\tt C_{1rs}^* + C_{1rs}^*\tt C_{1}^* \big)
\nonumber
\\
& \ \ \ \ 
-
 \tilde{\delta}_{1,r}\, \Big[\sum_{1<i<s} (-1)^B\, \big(C_{i}^*\tt C_{1is}^* + C_{i1s}^*\tt C_{i}^* \big)
- \sum_{s<i} (-1)^C\,  \big(C_{i}^*\tt C_{1si}^* + C_{1si}^*\tt C_{i}^* \big)\Big]\nonumber
\end{align}
where
\begin{align*}
  A &= \al(\{1,r,s\},\{1,r,s\}^c)+\al(\{1\},\{1,r,s\}^c)+
        \al({\mathrm{ord}}(\{1\},\{1,r,s\}^c),\{r,s\}) \\
  B &= \al(\{1,i,s\},\{1,i,s\}^c)+\al(\{i\},\{1,i,s\}^c)+
        \al({\mathrm{ord}}(\{i\},\{1,i,s\}^c),\{1,s\})  \\
  C &= \al(\{1,s,i\},\{1,s,i\}^c)+\al(\{i\},\{1,s,i\}^c)+
        \al({\mathrm{ord}}(\{i\},\{1,s,i\}^c),\{1,s\}) ,
\end{align*}
and we consider $I^c$ an ordered set, where $I^c$ is the complement of $I$ in $\{1,\dots,6\}$.

\end{theorem}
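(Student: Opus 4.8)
The plan is to apply formula (\ref{eq:De}) of Theorem~\ref{prop:dual}(b) to the explicit $\cp$-basis $\mathcal{B}=\{L\}\cup\{C_i\}\cup\{C_{ij}:i<j\}\cup\{C_{1jk}:1<j<k\}$ of $CK_6$. By that theorem the whole computation reduces to: write each $\la$-bracket of two elements of $\mathcal{B}$ as a $\cp$-linear combination $\sum_k P^{ab}_k(\la,\p)\,c_k$ of elements of $\mathcal{B}$, then set $Q^{ab}_k(x,y)=P^{ab}_k(x,-x-y)$ and read off $\de(c_k^*)=\sum_{a,b}Q^{ab}_k(\p\tt1,1\tt\p)\,a^*\tt b^*$. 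As a preliminary step one completes the bracket table: (\ref{CK6-b1})--(\ref{CK6-b4}) list brackets only with $L$ or a $C$ in the left slot, so one uses conformal skew-symmetry $[a_\la b]=-(-1)^{p(a)p(b)}[b_{-\la-\p}a]$ (with $p(L)=p(C_{ij})=\bar0$, $p(C_i)=p(C_{ijk})=\bar1$) to record the bracket of every ordered pair of basis vectors.

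Next one reduces all right-hand sides to $\mathcal{B}$. The outputs of (\ref{CK6-b22}), (\ref{CK6-b32}) and (\ref{CK6-b4}) contain $C_{ij}$ and $C_{ijk}$ whose index sets are not in normal form: out-of-order indices are sorted via the antisymmetry $C_{\sigma(I)}=\operatorname{sgn}(\sigma)\,C_I$ in each degree, which follows from the definitions of $C_i,C_{ij},C_{ijk}$ together with $\xi_I\xi_I^\bullet=\xi_\star$; a degree-$3$ generator $C_{ijk}$ with $1\notin\{i,j,k\}$ is converted, via (\ref{bull}) and $\beta^2=-1$, into $\mp\beta\,C_{1lm}$ where $\{1,l,m\}=\{i,j,k\}^c$, so that only the ten generators $C_{1jk}$ survive. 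For the exceptional bracket (\ref{CK6-b4}) one identifies $\beta(\xi_{ijkl}^\bullet+\beta\p\,\xi_{ijkl})$ with $\beta(-1)^{\alpha(\{i,j,k,l\},\{m,n\})}C_{mn}$, where $\{m,n\}=\{i,j,k,l\}^c$ --- the two Hodge signs match because in (\ref{alfa}) the factor $|I||J|=8$ is even --- and likewise rewrites $(\xi_i(\xi_{jkl}^\bullet))^\bullet+\beta\p\,\xi_i(\xi_{jkl}^\bullet)$ as a signed $C_{mn}$; hence $[{C_i}_\la C_{jkl}]$ is a $\CC[\beta]$-combination of the $C_{mn}$ with no external $\la$ or $\p$.

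Finally one dualizes. For each $c\in\{L,\ C_l,\ C_{rs}\ (r<s),\ C_{1st}\ (1<s<t)\}$ one collects from the completed table all pairs $(a,b)$ whose bracket contains $c$, applies $Q^{ab}_c(x,y)=P^{ab}_c(x,-x-y)$, and substitutes $x=\p\tt1$, $y=1\tt\p$. For $L^*$ and $C_l^*$ only the Virasoro-type brackets (\ref{CK6-b1})--(\ref{CK6-b11}) and the $C$--$C$ brackets (\ref{CK6-b21}) contribute, and the reductions $2x+(-x-y)=x-y$ and $x+(-x-y)=-y$ yield (\ref{CK6-1}). For $C_{rs}^*$ and $C_{1st}^*$ the Kronecker deltas in (\ref{CK6-b22}), (\ref{CK6-b31}), (\ref{CK6-b32}) select which pairs of index sets occur, and translating the conditions ``$i$ equals a prescribed index'' / ``$i$ lies in a prescribed ordered set'' into membership in $\mathcal{B}$ produces exactly the case splits $\sum_{1<i<s}$, $\sum_{s<i<t}$, $\sum_{t<i}$ of (\ref{CK6-2}) and $\sum_{i<r}$, $\sum_{r<i<s}$, $\sum_{s<i}$ of (\ref{CK6-3}); the contributions of (\ref{CK6-b4}) produce the $\beta$-terms, the $+$ sign between the two tensor factors reflecting $p(C_i)p(C_{1kl})=\bar1$. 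Collecting terms and using (\ref{alfa}) to normalize the exponents gives the stated formulas.

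The entire difficulty is sign bookkeeping, and the worst case is the exceptional bracket (\ref{CK6-b4}): one must carry the $\alpha$-signs from (\ref{bull}) through the nested Hodge duals $(\xi_i(\xi_{jkl}^\bullet))^\bullet$ and verify that they collapse to the single exponents $A$, $B$, $C$ and $\alpha(\{i\},\{1,k,l\})+\alpha(\mathrm{ord}(\{i\},\{1,k,l\}),\{r,s\})$ appearing in the theorem; (\ref{alfa}) is invoked repeatedly to interchange $\alpha(I,J)$ and $\alpha(J,I)$. A secondary, purely combinatorial nuisance is keeping the ordered-index convention consistent whenever a Kronecker delta in (\ref{CK6-b22}) or (\ref{CK6-b32}) identifies two indices and the surviving monomial must be re-sorted --- this is precisely what forces each sum in (\ref{CK6-2}), (\ref{CK6-3}) to break into subsums according to the position of the running index relative to $r$, $s$, $t$.
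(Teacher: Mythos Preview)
Your proposal is correct and follows essentially the same approach as the paper: both apply formula (\ref{eq:De}) to the explicit $\cp$-basis of $CK_6$, identify which $\la$-brackets (\ref{CK6-b1})--(\ref{CK6-b4}) contribute to each dual generator, and then carry out the sign bookkeeping arising from the Kronecker deltas in (\ref{CK6-b31})--(\ref{CK6-b32}) and from the nested Hodge duals in (\ref{CK6-b4}). The paper's proof makes exactly the same moves --- including the relation $C_{I^c}=\be\,(-1)^{\al(I^c,I)}C_I$ you describe for reducing degree-$3$ generators to the basis, and the identification of $(\xi_i(\xi_{1kl}^\bullet))^\bullet$ with a signed $\xi_{rs}$ that yields the exponents $A,B,C$ --- and organizes the case analysis for (\ref{CK6-2}) by walking through the six Kronecker-delta terms of (\ref{CK6-b32}) one at a time, which is the detailed version of the ``position of the running index relative to $s,t$'' split you outline.
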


\begin{proof}
   Using as before the equation (\ref{eq:De}), it is easy to see that all the terms with $L^*$ in (\ref{CK6-1}), (\ref{CK6-2}) and (\ref{CK6-3}), are obtained by the brackets in (\ref{CK6-b1}) and (\ref{CK6-b11}). Similarly, the sums in the expressions of $\de(L^*)$ and $\de(C_l^*)$ follow by the brackets in (\ref{CK6-b21}) and (\ref{CK6-b22}), by taking care with the Kronecker delta.

The third, fourth and fifth terms in (\ref{CK6-2}) are obtained by the coefficient in $\la$ in (\ref{CK6-b22}). The other sums in (\ref{CK6-2}) follow by a careful analysis of all the Kronecker delta in (\ref{CK6-b32}). We shall explain it in detail. We have that
the corresponding part of (\ref{eq:De}) is the following:
\begin{equation}\label{CK6-15}
   \sum_{\substack{i<j \\ 1<l<m}} \Big[
 (C_{1st}^*)_\mu\big([{C_{ij}}_\la C_{1lm}]\big)
\big(C_{ij}^*\tt C_{1lm}^*\big) + (C_{1st}^*)_\mu\big([{C_{1lm}}_\la{C_{ij}} ]\big)\big( C_{1lm}^*\tt C_{ij}^* \big) \Big]
 |_{\substack{\la=\p\otimes 1\qquad\ \\ \mu= -\p\otimes 1 - 1\otimes \p}}
\end{equation}
Observe that the $\la$-brackets in (\ref{CK6-15}) correspond to  (\ref{CK6-b32}). We have to consider the six terms in (\ref{CK6-b32}). The first one produce in (\ref{CK6-15}) the following sum:
\begin{equation}\label{CK6-25}
   \sum_{\substack{i<j \\ 1<l<m}} \Big[
 (C_{1st}^*)_\mu\big(\tilde{\de}_{i,1}\, C_{jlm}\big)
\big(C_{ij}^*\tt C_{1lm}^*\big)
-
(C_{1st}^*)_\mu\big(\tilde{\de}_{i,1}\, C_{jlm}\big)\big( C_{1lm}^*\tt C_{ij}^* \big) \Big]
 |_{\substack{\la=\p\otimes 1\qquad\ \\ \mu= -\p\otimes 1 - 1\otimes \p}}
\end{equation}
and we have to sum over the cases where $(C_{1st}^*)_\mu\big(
 C_{jlm}\big)\neq 0$. Observe that $1<j$ and $1<l<m$, therefore 
  we have that $\{1,s,t\}=\{j,l,m\}^c$. Using the notation $\{1,s,t\}^c=\{a,b,c\}$, with $a<b<c$, and the condition $l<m$, 
 we only have  the following three cases: $(j=a,l=b,m=c)$, or  $(l=a,j=b,m=c)$, or $(l=a,m=b,j=c)$. Hence, we need to compute $(C_{1st}^*)_\mu\big(
 C_{\{1,s,t\}^c}\big)$. Observe that, using (\ref{alfa}) and (\ref{bull}), for any $I\subset \{1,\dots,6\}$ with $|I|=3$, we have
 \begin{align*}
   C_{I^c} &= \xi_{I^c}+ \beta\, \xi_{I^c}^\bullet=(-1)^{\al(I,I^c)} \xi_I^\bullet+\be \,(-1)^{\al(I^c,I)} \xi_I= \be \,(-1)^{\al(I^c,I)} (\xi_I+ \be \xi_I^\bullet)=\be \,(-1)^{\al(I^c,I)} C_I.
 \end{align*}
Therefore, the sum (\ref{CK6-25}) is equal to
\begin{equation*}
  \be \,(-1)^{\al(I^c,I)} \Big[
        (C_{1a}^* \tt C_{1bc}^* - C_{1bc}^*\tt  C_{1a}^*) +
        (C_{1b}^* \tt C_{1ac}^* - C_{1ac}^*\tt  C_{1b}^*) +
        (C_{1c}^* \tt C_{1ab}^* - C_{1ab}^*\tt  C_{1c}^*)\Big],
\end{equation*}
with $I=\{1,s,t\}$, producing the terms in the third line in (\ref{CK6-2}).

Now, we have to consider the second term in (\ref{CK6-b32}). It will  produce in (\ref{CK6-15}) the following sum:
\begin{equation}\label{CK6-35}
   -\ \sum_{\substack{i<j \\ 1<l<m}} \Big[
 (C_{1st}^*)_\mu\big(\tilde{\delta}_{i,l} \, C_{j1m}\big)
\big(C_{ij}^*\tt C_{1lm}^*\big)
-
(C_{1st}^*)_\mu\big(\tilde{\delta}_{i,l} \, C_{j1m}\big)\big( C_{1lm}^*\tt C_{ij}^* \big) \Big]
 |_{\substack{\la=\p\otimes 1\qquad\ \\ \mu= -\p\otimes 1 - 1\otimes \p}}
\end{equation}
Again, we have to sum over the cases where $(C_{1st}^*)_\mu\big(
 C_{j1m}\big)\neq 0$, and $i=l$. Observe that $1<s<t$, therefore we have two cases: $(j=s,m=t)$, or $(m=s,j=t)$. In the first case,  we should take the sum over $i$  with $i<j=s$ and $1<l=i<m=t$, hence the sum over $1<i<s$, and in this case must change the sign since we are taking  $(C_{1st}^*)_\mu\big(
 C_{s1t}\big)$. In the second case,  we must take the sum over $i$ with $i<j=t$ and $1<l=i<m=s$, hence the sum over $1<i<s$. Therefore, the sum (\ref{CK6-35}) is equal to
\begin{equation*}
 - \sum_{1<i<s}\, \Big(
        -(C_{is}^* \tt C_{1it}^* - C_{1it}^*\tt  C_{is}^*) +
        (C_{it}^* \tt C_{1is}^* - C_{1is}^*\tt  C_{it}^*)\Big),
\end{equation*}
producing the  two terms in the fourth line in (\ref{CK6-2}).

Similarly, using the same ideas, we have to consider the four remaining terms in (\ref{CK6-b32}):
\begin{equation*}
  \tilde{\delta}_{i,m} \, C_{j1l} \, -\, \tilde{\delta}_{j,1} \, C_{ilm}
\, +\, \tilde{\delta}_{j,l} \, C_{i1m} \, -\, \tilde{\delta}_{j,m} \, C_{i1l}.
\end{equation*}
The term corresponding to $\tilde{\delta}_{i,m} \, C_{j1l}$ in (\ref{CK6-15}), gives the sum
\begin{equation*}
  \sum_{s<i<t}\,
        (C_{it}^* \tt C_{1si}^* - C_{1si}^*\tt  C_{it}^*) .
\end{equation*}
The term corresponding to $-\, \tilde{\delta}_{j,1} \, C_{ilm}$ in (\ref{CK6-15}), gives  the condition $i<j=1$, which is not possible.

\noindent
The term corresponding to $\tilde{\delta}_{j,l} \, C_{i1m}$ in (\ref{CK6-15}), gives the sum
\begin{equation*}
  - \sum_{s<i<t}\,
         (C_{si}^* \tt C_{1it}^* - C_{1it}^*\tt  C_{si}^*)
  .
\end{equation*}
The term corresponding to $-\, \tilde{\delta}_{j,m} \, C_{i1l}$ in (\ref{CK6-15}), gives  the sum
\begin{equation*}
  \sum_{t<i}\,
        \Big[
        (C_{si}^* \tt C_{1ti}^* - C_{1ti}^*\tt  C_{si}^*)
           - (C_{ti}^* \tt C_{1si}^* - C_{1si}^*\tt  C_{ti}^*)
        \Big].
\end{equation*}
Therefore, we obtained the remaining terms in (\ref{CK6-2}).

\

Now, it remains to prove (\ref{CK6-3}).
The third and fourth terms in (\ref{CK6-3}) are easily obtained using (\ref{CK6-b21}).
The next three sums in (\ref{CK6-3}) follow from (\ref{CK6-b31}).
The other terms in  (\ref{CK6-3}) correspond to the $\la$-brackets in (\ref{CK6-b4}). Namely, the corresponding part of (\ref{eq:De}) is the following:
\begin{equation}\label{CK6-5}
   \sum_{i=1}^{6}\sum_{\substack{1<k<l}} \Big[
 (C_{rs}^*)_\mu\big([{C_i}_\la C_{1kl}]\big)
\big(C_{i}^*\tt C_{1kl}^*\big) + (C_{rs}^*)_\mu\big([{C_{1kl}}_\la{C_i} ]\big)\big( C_{1kl}^*\tt C_{i}^* \big) \Big]
 |_{\substack{\la=\p\otimes 1\qquad\ \\ \mu= -\p\otimes 1 - 1\otimes \p}}
\end{equation}

\noindent Double sum   in (\ref{CK6-3}) will come from the first  term in (\ref{CK6-b4}). More precisely, we need to compute the part of (\ref{CK6-5}) corresponding to  the first term in (\ref{CK6-b4}). Observe  that in order to obtain a non-zero term after  applying $(C_{rs}^*)_\mu$ in (\ref{CK6-5}), we need to sum over $i,k$ and $l$ such that (here we use (\ref{bull}))
 \begin{equation}\label{CK6-7}
 \xi_{i1kl}^\bullet = (-1)^{\alpha(\{i\},\{1,k,l\})}\xi_{\mathrm{ord}(\{i\},\{1,k,l\})}^\bullet
                    =(-1)^{\alpha(\{i\},\{1,k,l\})
+\alpha(\mathrm{ord}(\{i\},\{1,k,l\}),\{r,s\})}\, \xi_{rs},
 \end{equation}
obtaining the double sum over $i,k$ and $l$ in (\ref{CK6-3}).

The last three terms in (\ref{CK6-3}) follow from the second term in (\ref{CK6-b4}). More precisely, we need to compute the part of (\ref{CK6-5}) corresponding to  the second term in (\ref{CK6-b4}). Observe that in order to obtain a non-zero term after  applying $(C_{rs}^*)_\mu$ in (\ref{CK6-5}), we need to sum over $i,k$ and $l$ such that
 \begin{equation}\label{CK6-6}
 (\xi_i(\xi_{1kl}^\bullet))^\bullet = (\mathrm{sgn})\, \xi_{rs}.
 \end{equation}
Therefore, $i$ must be equal to $1,k$ or $l$. Hence, we deduce that we have the following three cases: $(i=1,k=r,l=s)$, or $(i=k,1=r,l=s)$, or $(i=l,1=r,k=s)$. When we introduce each case in (\ref{CK6-5}), we clearly produce the last three sums in (\ref{CK6-3}). It remains to find the corresponding signs in (\ref{CK6-6}), for each case. Using the definition of the Hodge dual, we need to find $X$ such that $(\xi_i(\xi_{1kl}^\bullet)) \xi_{rs}=(-1)^X\, \xi_\star$. Using  (\ref{bull}), we have
\begin{align*}
  (\xi_i(\xi_{1kl}^\bullet)) \xi_{rs} &=
(-1)^{\al(\{1,k,l\},\{1,k,l\}^c)} (\xi_i \,\xi_{\{1,k,l\}^c})\, \xi_{rs} \\
   &= (-1)^{\al(\{1,k,l\},\{1,k,l\}^c)+\al(\{i\},\{1,k,l\}^c)}
        \xi_{\mathrm{ord}(\{i\},\{1,k,l\}^c)}\, \xi_{rs}\\
   &= (-1)^{\al(\{1,k,l\},\{1,k,l\}^c)+\al(\{i\},\{1,k,l\}^c)
           + \al({\mathrm{ord}}(\{i\},\{1,k,l\}^c),\{r,s\})}\, \xi_\star,
\end{align*}
where the complement of a set is ordered.
Therefore, we obtained the value of $X$, and using this,
 if we replace the cases in (\ref{CK6-5}), we get the signs $A,B$ and $C$ in (\ref{CK6-3}), respectively, finishing the proof.
\end{proof}

\

\subsection{N=2,3,4 differential Lie supercoalgebras}\lbb{K2}

We present the explicit description of  the Lie supercoalgebras associated to the OPE of the N=2,3,4 super conformal Lie algebras. They correspond to $K_2, K_3$ and $K_4$, respectively. Therefore, we shall use (\ref{K1}). In all these cases, we have
\begin{equation}\label{eq.k0}
  \delta(1^*)=2\,(1^*\tt \p \, 1^* - \p \, 1^*\tt 1^*) - \sum_{i=1}^{n} \xi_i^*\tt \xi_i^*.
\end{equation}

\noindent Recall the  notation:
\begin{equation}\label{nota}
  \xi_{i_1 \dots i_r}:=\xi_{i_1}\dots \xi_{i_r} \quad \hbox{ and }\quad \xi_\star:=\xi_1\dots\xi_n.
\end{equation}

\vskip .3cm

\noindent {\bf Case N=2:} Using that $K_2=\cp$-span$\{1,\xi_1,\xi_2,\xi_{12}\}$,   we have
\vskip -.1cm
\begin{equation*}
  \delta(\xi_i^*)=2(1^*\tt \p \xi_i^* - \p \xi_i^*\tt 1^*)
                    +(\xi_i^*\tt \p 1^* - \p 1^*\tt \xi_i^*)
                    +(\xi_{\{i\}^c}^* \tt \xi_{12}^* - \xi_{12}^* \tt \xi_{\{i\}^c}^*),
\end{equation*}
and
\begin{equation*}
  \delta(\xi_{12}^*)=2(1^*\tt \p \xi_{12}^* - \p \xi_{12}^*\tt 1^*)
                    -(\xi_2^*\tt \p \xi_1^* + \p \xi_1^*\tt \xi_2^*)
                    +(\xi_1^*\tt \p \xi_2^* + \p \xi_2^*\tt \xi_1^*).
\end{equation*}
\vskip .3cm

\noindent {\bf Case N=3:} for $i=1,2,3$, we have

\begin{equation*}
  \delta(\xi_i^*)=2(1^*\tt \p \xi_i^* - \p \xi_i^*\tt 1^*)
                    +(\xi_i^*\tt \p 1^* - \p 1^*\tt \xi_i^*)
                    +\sum_{k\neq i} (\xi_k^* \tt \xi_{ik}^* - \xi_{ik}^* \tt \xi_k^*),
\end{equation*}
\noindent together with (for $i<j$)
\begin{align*}
  \delta(\xi_{ij}^*)&=2(1^*\tt \p \xi_{ij}^* - \p \xi_{ij}^*\tt 1^*)
                    +(\xi_i^*\tt \p \xi_j^* + \p \xi_j^*\tt \xi_i^*)
                    -(\p\xi_i^*\tt \xi_j^* + \xi_j^*\tt \p\xi_i^*) \\
                    &\quad +(-1)^k\, (\xi_{123}^* \tt \xi_{k}^* + \xi_{k}^* \tt \xi_{123}^*)+(\xi_{kj}^* \tt \xi_{ik}^* - \xi_{ik}^* \tt \xi_{kj}^*)
\end{align*}
where $k\in\{i,j\}^c$, and
\begin{align*}
  \delta(\xi_{123}^*)&=2(1^*\tt \p \xi_{123}^* - \p \xi_{123}^*\tt 1^*)
                    +(\p 1^*\tt  \xi_{123}^* -  \xi_{123}^*\tt \p 1^*)\\
                    &\quad +(\xi_1^*\tt \p\xi_{23}^* - \p\xi_{23}^*\tt\xi_1^*)
                    -(\xi_2^*\tt \p\xi_{13}^* - \p\xi_{13}^*\tt\xi_2^*)
                    +(\xi_{3}^* \tt\p \xi_{12}^* - \p \xi_{12}^* \tt \xi_{3}^*).
\end{align*}

\vskip .5cm

\noindent {\bf Case N=4:} for $k=1,2,3,4$, we have
\vskip -.1cm

\begin{align}\label{eq.k1}
  \delta(\xi_k^*)= & \ 2\, (1^*\tt \p \xi_k^* - \p \xi_k^*\tt 1^*)
                    +(\xi_k^*\tt \p 1^* - \p 1^*\tt \xi_k^*)\\
                    & +\sum_{i < k} (\xi_{ik}^* \tt \xi_{i}^* - \xi_{i}^* \tt \xi_{ik}^*)+\sum_{k<i} (\xi_{i}^* \tt \xi_{ki}^* - \xi_{ki}^* \tt \xi_{i}^*).\nonumber
\end{align}
For $k<l$, we have
\vskip -.2cm

\begin{align}\label{eq.k2}
  \delta(\xi_{kl}^*) = &\, 2\, (1^*\tt \p \xi_{kl}^* - \p \xi_{kl}^*\tt 1^*)
                    +(\xi_k^*\tt \p \xi_l^* + \p \xi_l^*\tt \xi_k^*)
                    -(\p\xi_k^*\tt \xi_l^* + \xi_l^*\tt \p\xi_k^*)
\nonumber
\\
    &  + \ \sum_{i < k} \, \Big[(-\xi_{ikl}^* \tt \xi_{i}^* - \xi_{i}^* \tt \xi_{ikl}^*)+ (\xi_{ik}^* \tt \xi_{il}^* + \xi_{il}^* \tt \xi_{ik}^*)\Big] \nonumber
\\
&  +\sum_{k<i < l} \Big[(\xi_{kil}^* \tt \xi_{i}^* + \xi_{i}^* \tt \xi_{kil}^*)+ (-\xi_{ki}^* \tt \xi_{il}^* + \xi_{il}^* \tt \xi_{ki}^*)\Big] %
\\
&  +\ \sum_{l < i} \, \Big[(-\xi_{kli}^* \tt \xi_{i}^* - \xi_{i}^* \tt \xi_{kli}^*)+ (\xi_{ki}^* \tt \xi_{li}^* - \xi_{li}^* \tt \xi_{ki}^*)\Big] . \nonumber
\end{align}

\noindent For $k<l<m$, we have

\vskip -.2cm

\begin{align}\label{444}
& \delta(\xi_{klm}^*)= \, 2\, (1^*\tt \p \xi_{klm}^* - \p \xi_{klm}^*\tt 1^*)+(\p 1^*\tt  \xi_{klm}^* - \xi_{klm}^*\tt \p 1^*) \\
          & -( \p \xi_{kl}^*\tt \xi_m^* - \xi_m^*\tt\p \xi_{kl}^*)
            +(\p\xi_{km}^*\tt \xi_l^* - \xi_l^*\tt \p\xi_{km}^*)
            -( \p \xi_{lm}^*\tt \xi_k^* - \xi_k^*\tt\p \xi_{lm}^*) \nonumber
\\
    &  +  \sum_{i < k}  \hbox{\small $   \Big[ \xi_\star^*\tt \xi_i^*-\xi_i^*\tt \xi_\star^* -\xi_{ikl}^* \tt \xi_{im}^* + \xi_{im}^* \tt \xi_{ikl}^*
+ \xi_{ikm}^* \tt \xi_{il}^* - \xi_{il}^* \tt \xi_{ikm}^*
-\xi_{ilm}^* \tt \xi_{ik}^* + \xi_{ik}^* \tt \xi_{ilm}^*
\Big] $}\nonumber
\\
&   + \sum_{k< i < l} \! \! \hbox{\small $    \Big[\! - \! \xi_\star^*\tt \xi_i^*+\xi_i^*\tt \xi_\star^* +\xi_{kil}^* \tt \xi_{im}^* - \xi_{im}^* \tt \xi_{kil}^*- \xi_{kim}^* \tt \xi_{il}^* + \xi_{il}^* \tt \xi_{kim}^*
+\xi_{ilm}^* \tt \xi_{ki}^* - \xi_{ki}^* \tt \xi_{ilm}^*
\Big] $}\nonumber
\\
&   +  \sum_{l < i < m} \!\! \hbox{\small $ \Big[ \xi_\star^*\tt \xi_i^*
-\xi_i^*\tt \xi_\star^*
-\xi_{kli}^* \tt \xi_{im}^*
+ \xi_{im}^* \tt \xi_{kli}^*
+ \xi_{kim}^* \tt \xi_{li}^*
- \xi_{li}^* \tt \xi_{kim}^*
-\xi_{lim}^* \tt \xi_{ki}^*
+ \xi_{ki}^* \tt \xi_{lim}^*
\Big]  $}\nonumber
\\
&   +  \sum_{m < i} \! \hbox{\small $ \Big[
-\xi_\star^*\tt \xi_i^*
+\xi_i^*\tt \xi_\star^*
+\xi_{kli}^* \tt \xi_{mi}^*
- \xi_{mi}^* \tt \xi_{kli}^*
- \xi_{kmi}^* \tt \xi_{li}^*
+ \xi_{li}^* \tt \xi_{kmi}^*
+\xi_{lmi}^* \tt \xi_{ki}^*
- \xi_{ki}^* \tt \xi_{lmi}^*
\Big]  $}\nonumber
\end{align}
\vskip .2cm

\noindent and finally, we have

\begin{align*}
\delta(\xi_{\star}^*)= & \, 2\, (1^*\tt \p \xi_{\star}^*
- \p \xi_{\star}^*\tt 1^*)
+2(\p 1^*\tt  \xi_{\star}^* - \xi_{\star}^*\tt \p 1^*)\\
           &
          -( \p \xi_{123}^*\tt \xi_4^* + \xi_4^*\tt\p \xi_{123}^*)
          -( \xi_{123}^*\tt \p\xi_4^* + \p \xi_4^*\tt\xi_{123}^*) \\
           &
          +(  \p \xi_{124}^*\tt \xi_3^* + \xi_3^*\tt  \p \xi_{124}^*)
          +(  \xi_{124}^*\tt \p \xi_3^* + \p \xi_3^*\tt \xi_{124}^*) \\
           &
          -(  \p \xi_{134}^*\tt \xi_2^* + \xi_2^*\tt  \p \xi_{134}^*)
          -(  \xi_{134}^*\tt \p \xi_2^* + \p \xi_2^*\tt \xi_{134}^*)\\
           &
          +(  \p \xi_{234}^*\tt \xi_1^* + \xi_1^*\tt  \p \xi_{234}^*)
          +(  \xi_{234}^*\tt \p \xi_1^* + \p \xi_1^*\tt \xi_{234}^*)
.
%
\end{align*}

\

\section{classification of Jordan conformal superalgebras and differential Jordan supercoalgebras}\lbb{classification-Jordan}

\

In this section we present the definitions of Jordan conformal superalgebras and differential Jordan supercoalgebra, and we obtain the classification of finite simple differential Jordan supercoalgebras up to isomorphism.

In \cite{KR} p.524, the notions of Jordan $H$-pseudosuperalgebra and Jordan conformal superalgebra were introduced. After some  computations, we observed that if we write the Jordan identity for the  $H$-pseudosuperalgebra in the special case of $H=\CC[\p]$, we find that the Jordan identity in the conformal case is not the Jordan identity presented in \cite{KR}, but rather, it requires some minor corrections that are presented in the following definition.

\ 

\begin{definition} A {\it   Jordan conformal superalgebra} $R$ is  a left
$\ZZ/2\ZZ$-graded $\cp$-module endowed with a $\CC$-linear map,
\begin{displaymath}
R\otimes R  \longrightarrow \CC[\la]\otimes R, \qquad a\otimes b
\mapsto a_\la b
\end{displaymath}

\vskip .1cm

\noindent called the $\la$-product, and  satisfying the following axioms
$(a,\, b,\, c\in R)$:

\

\noindent Conformal sesquilinearity: $ \qquad  \pa (a_\la b)=-\la
(a_\la b),\qquad a_\la \pa b=(\la+\pa) (a_\la b)$,

\vskip .3cm

\noindent Commutativity: $\ \qquad\qquad\qquad a_\la
b=(-1)^{|a| |b|} \, b_{-\la-\pa} \, a$,

\vskip .3cm

\noindent Jordan identity:
\begin{align*}
 & (-1)^{|a||c|}\  a_\la((b_\mu c)_\nu d)
+(-1)^{|a||b|}  \  b_\mu((c_{\nu-\mu}a)_{\la-\mu} d)
+(-1)^{|b||c|}  \  c_{\nu-\mu} ((a_{-\mu-\p}b)_{\la+\mu} d) \\
& =(-1)^{|a||c|}\  (a_{-\mu-\p}b)_{\la+\mu}(c_{\nu-\mu}d)
+(-1)^{|a||b|}  \  (b_\mu c)_\nu(a_{\la}d)
+(-1)^{|b||c|}  \  (c_{\nu-\mu}a)_{\la+\nu-\mu}(b_\mu d),
\end{align*}

\vskip .1cm

\noindent where  $|a|\in \ZZ/2\ZZ$ is the parity of $a$.
\end{definition}

A Jordan conformal superalgebra is called {\it finite} if it has finite
rank as a $\CC[\pa]$-module. The notions of homomorphism, ideal
and subalgebras of a Jordan conformal superalgebra are defined in the
usual way. A Jordan conformal superalgebra $R$ is {\it simple} if the $\la$-product is nontrivial  and it has no nontrivial proper ideals.

Some examples of Jordan conformal superalgebras are as follows: the Jordan conformal superalgebra $J_n=\CC[\p]\otimes (\La(n)\oplus \La(n) \, \theta)$ have the following $\la$-products (for $a,b\in \La(n)$ homogeneous elements):
\begin{align}\label{Jnn}
  a_\la b \, & = \;\! ab \nonumber\\
  a_\la b\;\!\theta & =(ab)\,\theta \nonumber\\
  a\;\!\theta_\la b & =(-1)^{|b|} (ab)\,\theta\\
a\;\!\theta_\la b\;\!\theta & =
(-1)^{|b|}
\Big[ \,\la \, (|a|+|b|-4)\, ab + (|a|-2)\, \p (ab)  \Big.\nonumber\\
& \qquad\qquad  + \Big. (-1)^{|a|}\Big(\dsum_{i=1}^{n-2} (\p_i a) (\p_i b) +
(\p_n a) (\p_{n-1} b) + (\p_{n-1} a) (\p_n b) \Big)\Big].\nonumber
\end{align}

\noindent Observe that the last $\la$-product in (\ref{Jnn})  already includes some minor corrections compared  to the one given in \cite{KR}, p.527. This can be easily proved by considering the Jordan formal distribution superalgebra $J(1,n)=\La(n)[t,t^{-1}]\oplus  \La(n)[t,t^{-1}]\theta$, with the formal distributions
\begin{equation*}
  a(z)=\sum_{n\in \ZZ} (a t^n)z^{-n-1}=a\, \de(t-z),\qquad \mathrm{and}\qquad a\theta (z)=\sum_{n\in \ZZ} (a \theta t^n)z^{-n-1}=a\theta \, \de(t-z),
\end{equation*}
associated to every $a\in\La(n)$, where $\de(t-z)=\sum_{n\in \ZZ}  t^n \, z^{-n-1}$ is the usual delta function. In this case, one has to compute the corresponding products of the formal distributions and then one has to apply the Fourier transform to obtain the corresponding $\la$-products  (see \cite{K} for details).

The Jordan conformal superalgebra $JS_1$ is freely spanned over $\cp$ by an even element $S$ and an odd element $T$ such that
\begin{equation*}
S_\la S=2S,\quad T_\la T=(2\la +\p)S,\quad T_\la S=T.
\end{equation*}

The Jordan conformal superalgebra $JCK_4$ is freely spanned over $\cp$ by even elements $1, \omega_1,\omega_2,\omega_3$ and  odd elements $x,x_1,x_2,x_3$, with the following $\la$-products (for any generator $A$)
\begin{align*}
  1_\la A & = A  \\
  \omega_i\,_{\la}\, \omega_i  & =
\left \{
    \begin{aligned}
      \ 1
&,\quad \   \text{if} \ \ \ i=1,2 \\
   -1
&,\quad \    \text{if} \   \  \  i=3
    \end{aligned}
  \right . \\
 \omega_i\,_{\la}\, \omega_j  & = \, 0\qquad\quad \  \mathrm{ if }\ \ \  i\neq j\\
\omega_i\,_{\la}\, x \  & = \, \la \, x_i \\
\omega_i\,_{\la}\, x_j   & =  \, - x_{i\times j} \\
x\,_{\la}\, x \  & = (2\la+\p) \, 1 \\
x_i\,_{\la}\, x \  & = \, \omega_i \\
x_i\,_{\la}\, x_j \,  & = \, 0\qquad \mathrm{for\  all }\ \ i,j,
\end{align*}
where $x_{1\times 2}=-x_{2\times 1} =x_3$, $x_{1\times 3}=-x_{3\times 1} =x_2$, $-x_{2\times 3}=x_{3\times 2} =x_1$, $x_{i\times i}=0$.

For  $\mathcal{J}$  a  Jordan superalgebra, the {\it current conformal superalgebra}
associated to $\mathcal{J}$ is defined by:
\begin{displaymath}
{\rm Cur }(\mathcal{J})=\cp \otimes \mathcal{J}, \qquad \quad [a_\la b]=[a,b],
\qquad a,b\in \mathcal{J}.
\end{displaymath}
\vskip .2cm

\

The following theorem is the main result of \cite{KR}.

\begin{theorem}\label{JJJ}
  A simple finite Jordan conformal superalgebra is isomorphic to one of the conformal superalgebras in the following list:

\vskip .1cm

1.  $J_n$;

2.  ${JS}_1$;

3.   ${JCK}_4$;

4.  a current conformal superalgebra over a simple finite-dimensional Jordan superalgebra.
\end{theorem}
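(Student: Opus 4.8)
The plan is to deduce the statement from the Fattori--Kac classification of finite simple Lie conformal superalgebras via a conformal Tits--Kantor--Koecher (TKK) construction, following \cite{KR}; since the full argument is carried out there, I will only outline the structure and indicate why the corrections to the Jordan identity and to the $\lambda$-products of $J_n$ made above leave the conclusion intact.

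First I would set up the TKK functor: to a Jordan conformal superalgebra $R$ one attaches a $\ZZ$-graded Lie conformal superalgebra $\mathrm{TKK}(R)=\mathfrak g_{-1}\oplus\mathfrak g_0\oplus\mathfrak g_1$ with $\mathfrak g_{\pm1}\cong R$ as $\cp$-modules, with $[{\mathfrak g_{1}}_\la{\mathfrak g_{1}}]=[{\mathfrak g_{-1}}_\la{\mathfrak g_{-1}}]=0$, and with the bracket $\mathfrak g_{1}\otimes\mathfrak g_{-1}\to\CC[\la]\otimes\mathfrak g_{0}$ built from the $\lambda$-product of $R$; here $\mathfrak g_0$ is spanned by the inner structure maps of $R$, a conformal analogue of the structure Lie algebra. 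The key point is that the corrected Jordan identity for $R$ is exactly equivalent to the Jacobi identity for $\mathrm{TKK}(R)$, and $R$ is finite over $\cp$ iff $\mathrm{TKK}(R)$ is. One then checks that simplicity of $R$ is reflected by a simplicity-type condition on $\mathrm{TKK}(R)$ (up to the standard subtleties with centers and derived subalgebras, e.g. the passage $K_4\leadsto K_4'$), and, conversely, that every finite simple Lie conformal superalgebra admitting a compatible short $\ZZ$-grading is $\mathrm{TKK}(R)$ for some simple $R$.

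Next I would run through the list of \cite{FK} --- $W_n$, $S_{n,a}$, $\tilde S_n$, $K_n$, $K_4'$, $CK_6$, $\mathrm{Cur}(\mathfrak g)$ --- and determine which of them admit a short $\ZZ$-grading respecting the $\CC[\p]$-module structure. The entries that do yield, on the odd part $\mathfrak g_1$ together with its induced $\lambda$-product, the conformal superalgebras $J_n$, $JS_1$, $JCK_4$; the current case $\mathrm{Cur}(\mathfrak g)$ carries a short grading precisely when $\mathfrak g$ is the classical Tits--Kantor--Koecher Lie superalgebra of a simple finite-dimensional Jordan superalgebra $\mathcal J$, in which case $\mathfrak g_1=\mathrm{Cur}(\mathcal J)$; the remaining entries are shown to admit no such grading. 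The main obstacle is exactly this last case-by-case analysis --- establishing (non)existence of short gradings on the Fattori--Kac list and identifying the resulting Jordan conformal superalgebras --- which is the technical heart of \cite{KR}. Finally, since the corrected $\lambda$-products of $J_n$ (and the corrected Jordan identity) differ from those of \cite{KR} only in signs and low-order terms that are forced by the requirement that $\mathrm{TKK}$ satisfy the Jacobi identity, the algebras produced by the construction are precisely the $J_n$, $JS_1$, $JCK_4$ in their corrected form, so the list above is unchanged.
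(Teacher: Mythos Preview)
The paper does not prove this theorem at all: it is stated as ``the main result of \cite{KR}'' and simply quoted, with the only novelty being the observation that the Jordan identity and the $\la$-products of $J_n$ as printed in \cite{KR} need minor sign/low-order corrections. Your outline of the TKK construction and the reduction to the Fattori--Kac list is exactly the strategy of \cite{KR}, so you are in line with the cited source; your remark that the corrections do not affect the output of the classification is also the paper's implicit position.

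Two small inaccuracies in your sketch are worth flagging. First, $\mathfrak g_1$ is the degree-$1$ piece of a short $\ZZ$-grading, not ``the odd part''; in the superalgebra setting the $\ZZ$-grading and the $\ZZ/2\ZZ$-parity are independent, and conflating them would break the argument. Second, the case analysis is not quite ``which algebras admit a short grading and which do not'': several entries on the Fattori--Kac list admit more than one inequivalent short grading, and part of the work in \cite{KR} is to enumerate all such gradings (up to automorphism) and then to show that the resulting Jordan conformal superalgebras collapse, up to isomorphism, to the four families listed. Your phrase ``the remaining entries are shown to admit no such grading'' understates what actually has to be checked.
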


\begin{definition} A {\it differential Jordan supercoalgebra} $(C,\Delta)$ is a $\ZZ/2\ZZ$-graded $\CC[\partial]$-module $C$ endowed with a $\CC[\partial]$-homomorphism
\begin{displaymath}
\De:C\to C\tt C
\end{displaymath}
such that it is co-commutative:
\begin{equation*}
  \tau \,\De=\De,
\end{equation*}
\noindent and satisfies the co-Jordan identity:
\vskip -.14cm
\begin{displaymath}
(1+\zeta+\zeta^2)(\De\otimes \Delta) \Delta = (1+\zeta+\zeta^2)(I\tt \De\tt I)(I\otimes \Delta) \Delta,
\end{displaymath}
\vskip .3cm

\noindent
where $\tau(a\otimes b )=(-1)^{|a| |b|}\, b\otimes a$, and $\zeta(a\tt b\tt c\tt d)=(-1)^{|a| (|b|+|c|)}\, b\tt c\tt a\tt d$.
\end{definition}

\noindent That is, the standard definition of Jordan supercoalgebra, with a compatible
$\CC[\partial]$-structure. A differential Jordan supercoalgebra is called {\it finite} if it has finite
rank as a $\CC[\pa]$-module. The notions of homomorphism 
and subsupercoalgebras of a differential Jordan supercoalgebra are defined in the
usual way.  A differential Jordan supercoalgebra $(C,\Delta)$ is {\it simple} if $\Delta\neq 0$ and contains no subsupercoalgebras except for zero and itself.

By a  simple computation and using the same proofs given in \cite{L} and in our Section 2, it is easy to show that the analog correspondence of categories in Theorem \ref{equival} holds for  finite free Jordan conformal algebras. Similarly, Proposition \ref{ideal}
and the construction given in Theorem \ref{prop:dual} also hold for  Jordan conformal (super)algebras and  differential Jordan (super)coalgebras.  Using these results, we have one of the main result of this work:

\begin{theorem}
Any  simple differential Jordan supercoalgebra of finite rank is isomorphic to one  of the following list:

\vskip .1cm

1. $(J_n)^{*_c}  $;

2. $({JS}_1)^{*_c} $;

3. $({JCK}_4)^{*_c}  $;

4. $(Cur(\mathfrak{g}))^{*_c}$, where $\mathfrak{g}$ is a simple finite-dimensional Jordan superalgebra.
\end{theorem}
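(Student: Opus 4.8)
The plan is to transport the Kac--Retakh classification (Theorem~\ref{JJJ}) through the conformal-dual functor, exactly as was done for Lie conformal superalgebras. First I would record that the whole machinery of Section~2 carries over verbatim to the Jordan setting: the conformal dual $U^{\c}$, the map $\Phi_\mu$ of Proposition~\ref{prop:phi} (together with its injectivity and naturality), the isomorphism $\varphi:V\to(V^{\c})^{\c}$ of Proposition~\ref{varphi-}, and the mutually inverse contravariant functors $F,G$ of Theorem~\ref{equival}. Concretely, given a finite free differential Jordan supercoalgebra $(C,\De)$ one equips $C^{\c}$ with the $\la$-product $(f_\mu g)_\la(r)=[\Phi_\mu(f\tt g)]_\la(\De(r))$; conversely, given a finite free Jordan conformal superalgebra $R=\oplus_i\cp a_i$ one defines on $R^{\c}$ the coproduct $\De$ uniquely determined by $\Phi_{-\mu}\circ\De=\pi_\mu$, where $[\pi_\mu(f)]_\la(a\tt b)=f_\la(a_\mu b)$ --- the Jordan analogue of (\ref{piii}) and of the formula (\ref{eq:De}). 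One then checks that co-commutativity $\tau\De=\De$ dualizes to commutativity of the $\la$-product, and that the co-Jordan identity is precisely the transpose of the (corrected) Jordan identity; this is the content of the remark preceding the statement, which the paper asserts follows from the same proofs as in \cite{L} and Section~2, so I would cite it as established.

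Second, I would invoke the Jordan analogue of Proposition~\ref{ideal}. Identifying $R\cong(R^{\c})^{\c}$ via $\varphi$, the annihilator map $I\mapsto I^{\perp}$ is a bijection between $\cp$-submodules of $R$ and $\cp$-submodules of $C=G(R)$; dualizing a quotient $\pi:R\to R/I$ by an ideal $I$ gives an injective coalgebra homomorphism $\pi^*:(R/I)^{\c}\hookrightarrow R^{\c}$ with image $I^{\perp}$, so $I^{\perp}$ is a subcoalgebra, while dualizing an inclusion $J\hookrightarrow R^{\c}$ of a subcoalgebra gives a Jordan conformal homomorphism $R\to J^{\c}$ with kernel $J^{\perp}$, an ideal; these operations are mutually inverse. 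Since $R_\la R\ne 0$ if and only if $\De_{R^{\c}}\ne 0$, simplicity is preserved in both directions, and simplicity forces freeness exactly as in the Lie case, so restricting to finite free objects loses nothing. Hence $R\mapsto R^{\c}$ descends to a bijection between isomorphism classes of simple finite Jordan conformal superalgebras and simple differential Jordan supercoalgebras of finite rank.

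Third, I would apply this bijection to the Kac--Retakh list in Theorem~\ref{JJJ}: a simple finite Jordan conformal superalgebra is isomorphic to one of $J_n$, $JS_1$, $JCK_4$, or $\mathrm{Cur}(\mathfrak{g})$ with $\mathfrak{g}$ a simple finite-dimensional Jordan superalgebra, so dualizing yields exactly the four families $(J_n)^{*_c}$, $(JS_1)^{*_c}$, $(JCK_4)^{*_c}$, $(\mathrm{Cur}(\mathfrak{g}))^{*_c}$. These remain pairwise non-isomorphic because the functor $F=(\,\cdot\,)^{\c}$ reflects isomorphisms, being half of an equivalence with natural isomorphisms $G\circ F\cong 1$ and $F\circ G\cong 1$. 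This completes the classification.

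The main obstacle is genuinely the first step: one must verify that dualizing the corrected Jordan identity reproduces the co-Jordan identity, keeping careful track of the Koszul signs $(-1)^{|a||b|}$ and of the substitution $\la=\p\tt 1$, $\mu=-\p\tt 1-1\tt\p$ used to pass between $\la$-products and coproducts. Unlike the Lie case, the Jordan identity involves three spectral parameters $\la,\mu,\nu$ and four arguments, so the pairing computation takes place in $(R\tt R\tt R\tt R)^{\c}$ rather than $(R\tt R\tt R)^{\c}$, and one needs the evident multilinear analogue of $\Phi$ to express each term; the cyclic operator $\zeta$ in the definition then matches the cyclic permutations $(-1)^{|a||c|},(-1)^{|a||b|},(-1)^{|b||c|}$ appearing in the Jordan identity. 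Once this bookkeeping is carried out --- which the paper does implicitly by asserting that the Section~2 proofs transfer --- everything else is formal.
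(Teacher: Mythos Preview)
Your proposal is correct and follows essentially the same approach as the paper: the paper does not give a separate proof but simply remarks, in the paragraph preceding the theorem, that the correspondence of categories in Theorem~\ref{equival}, Proposition~\ref{ideal}, and the construction in Theorem~\ref{prop:dual} all carry over to the Jordan setting by the same proofs, and then invokes the Kac--Retakh classification (Theorem~\ref{JJJ}). Your write-up is in fact more explicit than the paper's own treatment, particularly in flagging the bookkeeping needed to dualize the three-parameter Jordan identity into the co-Jordan identity via the cyclic operator $\zeta$.
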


\

Now, we present an explicit description of them. By a straightforward computation, we have:

\begin{theorem}
  The Jordan supercoalgebra structure on $(JS_1)^\c$ is defined by the following formulas, in the dual basis:
\begin{align*}
  \De(T^*) & = T^*\tt S^* + S^*\tt T^*, \\
  \De(S^*) & = 2\,S^*\tt S^* + \p T^*\tt T^*- T^*\tt \p T^*.
\end{align*}

\noindent  The Jordan supercoalgebra structure on $( JCK_4)^\c$ is defined by the following formulas, in the dual basis:
\begin{align*}
  \De(1^*) & = 1^*\tt 1^* + \omega_1^*\tt \om_1^*+\omega_2^*\tt \om_2^*-\omega_3^*\tt \om_3^*+\p x^*\tt x^*- x^*\tt \p x^*, \\
  \De(x^*) & = 1^*\tt x^* +  x^*\tt 1^*,\\
\De(\om_i^*) & = 1^*\tt \om_i^* + \omega_i^*\tt 1^*+ x_i^*\tt x^*-x^*\tt x_i^*,\\
\De(x_k^*) & = 1^*\tt x_k^* + x_k^*\tt 1^*+ \p\, \omega_k^*\tt x^*+x^*\tt \p\, \omega_k^*
-\sum_{\substack{i,j\in\{k\}^c\\ i\neq j}} (\omega_i^*\tt x_j^*+ x_j^*\tt \omega_i^*) .
\end{align*}

\end{theorem}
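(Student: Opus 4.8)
The plan is to apply the construction in Theorem~\ref{prop:dual}~(b) directly to the two Jordan conformal superalgebras $JS_1$ and $JCK_4$, using the explicit $\la$-products listed above. The analog of formula \eqref{eq:De} for Jordan conformal superalgebras gives the co-product on the dual as
\[
\De(a_k^*)=\sum_{i,j} Q^{ij}_k(\p\tt 1,\,1\tt\p)\ a_i^*\tt a_j^*,
\qquad Q^{ij}_k(x,y)=P^{ij}_k(x,-x-y),
\]
where $a_{i\,\la}a_j=\sum_k P^{ij}_k(\la,\p)\,a_k$; the only change from the Lie case is that one uses the $\la$-product rather than the $\la$-bracket, and the correspondence of categories (which, as noted, carries over verbatim to the Jordan setting by the argument of Section~\ref{Lie}) guarantees that the resulting co-product satisfies the co-commutativity and co-Jordan axioms. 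So there is nothing structural left to check: the proof is a bookkeeping computation, organized basis element by basis element.

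First I would handle $JS_1$. With $\cp$-basis $\{S,T\}$ and products $S_\la S=2S$, $T_\la T=(2\la+\p)S$, $T_\la S=T$, together with commutativity $S_\la T=(-1)^{|S||T|}T_{-\la-\p}S=T$ (here $T$ is odd, $S$ even, so the sign is $+1$), one reads off the polynomials $P$. For $\De(S^*)$ one collects every ordered pair $(a_i,a_j)$ whose product contains $S$: the pair $(S,S)$ contributes $2$, the pair $(T,T)$ contributes $2\la+\p$. Substituting $\la=\p\tt 1$, $\mu=-\p\tt 1-1\tt\p$ — equivalently applying $P(x,-x-y)$ with $x=\p\tt 1$, $y=1\tt\p$ — turns $2\la+\p$ into $2(\p\tt 1)+(-\p\tt 1-1\tt\p)=\p\tt 1-1\tt\p$, giving the term $\p T^*\tt T^*-T^*\tt\p T^*$. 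For $\De(T^*)$ the only products containing $T$ are $T_\la S=T$ and $S_\la T=T$, each with constant coefficient $1$, producing $T^*\tt S^*+S^*\tt T^*$. This matches the stated formulas.

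For $JCK_4$ the computation is longer but identical in spirit. One lists, for each of the eight basis elements $1,\om_1,\om_2,\om_3,x,x_1,x_2,x_3$, all ordered pairs of basis elements whose $\la$-product has a nonzero component on that element, records the polynomial coefficient, and applies the substitution $P(x,-x-y)$ with $x=\p\tt 1$, $y=1\tt\p$. The products are all constant in $\la,\p$ except $x_\la x=(2\la+\p)1$, which yields the $\p x^*\tt x^*-x^*\tt\p x^*$ piece in $\De(1^*)$ and the $\p\om_k^*\tt x^*+x^*\tt\p\om_k^*$ pieces in $\De(x_k^*)$ (the latter coming from $\om_{i\,\la}x=\la x_i$, whose coefficient $\la$ becomes $\p\tt 1$, and from its commutativity partner $x_\la\om_i$, whose coefficient becomes $-\p\tt 1-1\tt\p$ followed by the sign bookkeeping that turns the difference into a sum because $x$ and $\om_i$ have opposite parity). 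The signs in the $-\sum_{i,j\in\{k\}^c,\,i\ne j}(\om_i^*\tt x_j^*+x_j^*\tt\om_i^*)$ term come from tracking the product $\om_{i\,\la}x_j=-x_{i\times j}$ and its commutativity partner.

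The main obstacle — and really the only one — is sign management: the parities ($1,\om_i$ even; $x,x_i$ odd) interact with the Koszul-type signs $(-1)^{|a||b|}$ coming both from commutativity (needed to get the ``missing'' half of each product from $a_\la b=(-1)^{|a||b|}b_{-\la-\p}a$) and from the $\tau$-twist in the target $C\tt C$, and it is easy to drop a sign when converting a product $a_{j\,\la}a_i=\cdots$ into the coefficient of $a_i^*\tt a_j^*$. The safeguard is that co-commutativity $\tau\De=\De$ forces each $\De(a_k^*)$ to be $\tau$-symmetric, so every partial computation can be cross-checked against that symmetry; since the correspondence of categories already guarantees $\De$ is a well-defined Jordan co-product, verifying $\tau$-symmetry of the written formulas is enough to be confident the signs are right. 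This completes the proof.
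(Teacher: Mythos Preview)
Your approach is correct and is exactly what the paper does: it states the theorem ``by a straightforward computation'' with no further details, meaning the intended proof is precisely the direct application of formula~\eqref{eq:De} (transported to the Jordan setting) that you carry out. Your write-up in fact supplies more detail than the paper itself; the only minor slip is the parenthetical about ``opposite parity'' of $x$ and $\om_i$ (since $|\om_i|=0$ there is no Koszul sign there---the $x^*\tt\p\om_k^*$ term comes straight from the substitution $-\la-\mu\mapsto 1\tt\p$), but this does not affect the argument.
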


In the remaining case we have:

\begin{theorem}
  The Jordan supercoalgebra structure on $(J_n)^\c$ is defined by the following formulas, in the dual basis:

  \begin{align}\label{DJn1}
    \De ((\xi_K \theta)^*) &= \sum_{\substack{I,J \, : \, I\cap J=\emptyset \\
    \mathrm{ord}(I,J)=K}}  (-1)^{\alpha(I,J)}\, \Big[\, \xi_I^*\tt   (\xi_J \theta)^*+
    (-1)^{|I|\,(|J|+1)} \,
     (\xi_J \theta)^*\tt \xi_I^*\Big]
  \end{align}
and
\begin{align}\label{DJn2}
    \De &(\xi_K^*) = \sum_{\substack{I,J \, : \, I\cap J=\emptyset \\
    \mathrm{ord}(I,J)=K}}  (-1)^{\alpha(I,J)}\, \xi_I^*\tt \xi_J^* \nonumber\\
   & \ \ \ \quad +
\sum_{\substack{I,J \, : \, I\cap J=\emptyset \\
    \mathrm{ord}(I,J)=K}}
(-1)^{|J|+\alpha(I,J)}
   (|J|-2)
  \Big[ \p (\xi_I\,\theta)^*\tt (\xi_J\,\theta)^*
+ (-1)^{(|I|+1)(|J|+1)} (\xi_J\,\theta)^*\tt \p (\xi_I\,\theta)^*  \Big]
 \\
    & \ \ \ \quad +
\sum_{i=1}^{n-2}\sum_{\substack{I,J \, : \,i\in I, i\in J  \\ (I-\{i\})\cap (J-\{i\})=\emptyset \\
    \mathrm{ord}(I-\{i\},J-\{i\})=K}}
(-1)^{|I|+|J|+\varepsilon_i^I+\varepsilon_i^J+\, \alpha(I-\{i\},J-\{i\})}\, (\xi_I\,\theta)^*\tt (\xi_J\,\theta)^*
\nonumber\\
    & \ \ \ \quad +
\sum_{\substack{I,J \, : \,n-1\in I, n\in J \\ (I-\{n-1\})\cap (J-\{n\})=\emptyset \\
    \mathrm{ord}(I-\{n-1\},J-\{n\})=K}}
(-1)^{|I|+|J|+\varepsilon_{n-1}^I+\varepsilon_n^J+\, \alpha(I-\{n-1\},J-\{n\})}\, (\xi_I\,\theta)^*\tt (\xi_J\,\theta)^*
\nonumber\\
    & \ \ \ \quad +
\sum_{\substack{I,J \, : \,n\in I, n-1\in J \\ (I-\{n\})\cap (J-\{n-1\})=\emptyset \\
    \mathrm{ord}(I-\{n\},J-\{n-1\})=K}}
(-1)^{|I|+|J|+\varepsilon_{n}^I+\varepsilon_{n-1}^J+\, \alpha(I-\{n\},J-\{n-1\})}\, (\xi_I\,\theta)^*\tt (\xi_J\,\theta)^*.\nonumber
  \end{align}
\end{theorem}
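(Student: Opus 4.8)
The strategy is to apply the general formula (\ref{eq:De}) from Theorem \ref{prop:dual}(b) to the Jordan conformal superalgebra $J_n$, reading off the co-product of each dual basis element from the $\la$-products listed in (\ref{Jnn}). The computation is the Jordan analogue of the Lie case handled in Subsection \ref{Wn}: for each dual basis vector $f \in (J_n)^\c$ we expand
$$\De(f) = \sum_{I,J} f_\mu\big( {\xi_I}_\la \xi_J\big)\,\xi_I^*\tt\xi_J^* + \sum_{I,J} f_\mu\big({\xi_I}_\la (\xi_J\theta)\big)\,\xi_I^*\tt(\xi_J\theta)^* + \sum_{I,J} f_\mu\big((\xi_I\theta)_\la \xi_J\big)\,(\xi_I\theta)^*\tt\xi_J^* + \sum_{I,J} f_\mu\big((\xi_I\theta)_\la (\xi_J\theta)\big)\,(\xi_I\theta)^*\tt(\xi_J\theta)^*,$$
subject to the substitution $\la = \p\tt 1$, $\mu = -\p\tt 1 - 1\tt\p$. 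Note that since the Jordan $\la$-product is commutative rather than skew, the sign bookkeeping in Theorem \ref{prop:dual}(b) goes through verbatim, so this formula is valid.

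First I would treat $\De((\xi_K\theta)^*)$. Here only the $\la$-products landing in $\La(n)\theta$ contribute, namely $a_\la b\theta = (ab)\theta$ and $a\theta_\la b = (-1)^{|b|}(ab)\theta$. Writing $\xi_I\xi_J$ in the ordered basis via (\ref{WW1}) and using that neither of these brackets involves $\la$ or $\p$, the two resulting sums combine, after applying (\ref{alfa}) to one of them and relabeling $I \leftrightarrow J$, into the single symmetric expression (\ref{DJn1}); the plus sign (rather than the minus of the Lie case) is exactly the co-commutativity $\tau\De=\De$.

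Next I would treat $\De(\xi_K^*)$, which is the bulk of the statement. The $\la$-products contributing a term in $\La(n)$ are $a_\la b = ab$ (giving the first sum in (\ref{DJn2}), as in the Lie $W_n$ computation but with no $\la$-coefficient) and $a\theta_\la b\theta$, whose expression (\ref{Jnn}) has three pieces: a $\la$-term $(-1)^{|b|}\la(|a|+|b|-4)ab$, a $\p$-term $(-1)^{|b|}(|a|-2)\p(ab)$, and the "inner derivative" term $(-1)^{|a|+|b|}\big(\sum_{i=1}^{n-2}(\p_i a)(\p_i b) + (\p_n a)(\p_{n-1}b) + (\p_{n-1}a)(\p_n b)\big)$. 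Combining the $\la$- and $\p$-pieces after the substitution $\la=\p\tt1$, $\mu=-\p\tt1-1\tt\p$ and using the identity $\big[\mu(|a|-2) + \la(|a|+|b|-4)\big]\big|_{\la=\p\tt1,\,\mu=-\p\tt1-1\tt\p} = (|b|-2)(\p\tt1) - (|a|-2)(1\tt\p)$ (exactly the identity already used in the proof of Theorem on $(K_n)^\c$) produces, after relabeling $I\leftrightarrow J$ and invoking (\ref{alfa}), the second sum in (\ref{DJn2}). The inner-derivative term splits according to whether the contracted index $i$ ranges over $1,\dots,n-2$ (diagonal contraction $\p_i\xi_I\cdot\p_i\xi_J$) or realizes the off-diagonal pairs $(n-1,n)$ and $(n,n-1)$; writing $\p_i\xi_I = (-1)^{\varepsilon_i^I}\xi_{I-\{i\}}$ and again ordering products via (\ref{WW1}) yields precisely the last three sums in (\ref{DJn2}), with the signs $(-1)^{|I|+|J|+\varepsilon\bullet+\varepsilon\bullet+\al(\cdots)}$ as stated.

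The main obstacle is purely the sign arithmetic: one must carefully track the parity factors $(-1)^{|b|}$ and $(-1)^{|a|+|b|}$ from (\ref{Jnn}), the parities introduced when passing to the dual basis in (\ref{eq:De}), the reordering signs $\al(I,J)$ and $\varepsilon_i^I$, and the swap sign from relabeling $I\leftrightarrow J$ via (\ref{alfa}) — and check that everything assembles into the displayed co-commutative form. No conceptual difficulty arises; the verification that $\De$ so defined satisfies the co-Jordan identity is automatic from Theorem \ref{prop:dual}(b) together with the fact (noted in Section \ref{classification-Jordan}) that the category equivalence of Theorem \ref{equival} carries over to the Jordan setting, so $(J_n)^\c = G(J_n)$ is a differential Jordan supercoalgebra by construction.
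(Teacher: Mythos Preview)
Your proposal is correct and follows essentially the same route as the paper's proof: apply (\ref{eq:De}) with the $\la$-products (\ref{Jnn}), observe that only the even--odd products contribute to $\De((\xi_K\theta)^*)$ and only the even--even and odd--odd products to $\De(\xi_K^*)$, use the substitution identity $\mu(|I|-2)+\la(|I|+|J|-4)\mapsto(|J|-2)(\p\tt1)-(|I|-2)(1\tt\p)$ for the $\la/\p$ part, and split the inner-derivative piece into the diagonal and two off-diagonal contractions. The only cosmetic difference is that the paper carries out the $I\leftrightarrow J$ symmetrization and the sign simplification via (\ref{alfa}) explicitly in each case rather than referencing the $(K_n)^\c$ computation.
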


\vskip .4cm

\begin{proof}
\noindent Using (\ref{eq:De}), we have
\begin{align*}
    \de ((\xi_K \theta)^*))
     &=\sum_{I,J}\, (\xi_K \theta)^*_\mu \big([{\xi_I}_\la \xi_J\,\theta]\big) (\xi_I^*\tt (\xi_J\,\theta)^*) + \sum_{I,J} \, (\xi_K \theta)^*_\mu \big([{\xi_J\,\theta}_\la \xi_I]\big) ((\xi_J\,\theta)^*\tt \xi_I^*),
\end{align*}
\noindent where we must remember that we have to replace $
\la=\p\otimes 1$ and $\mu= -\p\otimes 1 - 1\otimes \p $. Then
\vskip .2cm
\begin{align*}
\de ((\xi_K \theta)^*))
 &=\sum_{I,J}\, (\xi_K \theta)^*_\mu \big({\xi_I} \xi_J\,\theta\big) (\xi_I^*\tt (\xi_J\,\theta)^*) + \sum_{I,J} \, (\xi_K \theta)^*_\mu \big((-1)^{|I|}{\xi_J} \xi_I\,\theta\big) ((\xi_J\,\theta)^*\tt \xi_I^*)\\
 &= \sum_{\substack{I,J \, : \, I\cap J=\emptyset \\
    \mathrm{ord}(I,J)=K}}  (-1)^{\alpha(I,J)}\, \xi_I^*\tt (\xi_J \, \theta)^*
     +   \sum_{\substack{I,J \, : \, I\cap J=\emptyset \\
    \mathrm{ord}(I,J)=K}}  (-1)^{|I|+\alpha(J,I)}\, (\xi_J\,\theta)^*\tt \xi_I^*\\
     &= \sum_{\substack{I,J \, : \, I\cap J=\emptyset \\
    \mathrm{ord}(I,J)=K}}  (-1)^{\alpha(I,J)}\, \Big[\, \xi_I^*\tt   (\xi_J \theta)^*+
    (-1)^{|I|\,(|J|+1)} \,
     (\xi_J \theta)^*\tt \xi_I^*\Big]
          ,
     \nonumber
  \end{align*}
obtaining (\ref{DJn1}). Similarly, we have
\begin{align}\label{J100}
    \de (\xi_K^*)
     &=\sum_{I,J}\, (\xi_K^*)_\mu \big([{\xi_I}_\la \xi_J]\big) (\xi_I^*\tt \xi_J^*) + \sum_{I,J} \, (\xi_K^*)_\mu \big([{(\xi_I\,\theta)}_\la (\xi_J\,\theta)]\big) ((\xi_I\,\theta)^*\tt (\xi_J\,\theta)^*),
\end{align}
\noindent where we must remember that we have to replace $
\la=\p\otimes 1$ and $\mu= -\p\otimes 1 - 1\otimes \p $. Then, it is clear that the first sum in (\ref{J100}) is equal to
\begin{equation*}
  \sum_{\substack{I,J \, : \, I\cap J=\emptyset \\
    \mathrm{ord}(I,J)=K}}  (-1)^{\alpha(I,J)}\, \xi_I^*\tt \xi_J^*
\end{equation*}
which is the first term in (\ref{DJn2}). The second sum in (\ref{J100}) is equal to
\begin{align}\label{termino}
  & \sum_{I,J}  (-1)^{|J|} (\xi_K^*)_\mu
\Bigg(\,\la\, (|I|+|J|-4)\,\xi_I\xi_J +
(|I|-2)\,\p\, \xi_I\xi_J \nonumber
 \\
&\qquad \ \ \ \ + (-1)^{|I|} \Big[\sum_{i=1}^{n-2}  (\p_i \xi_i) (\p_i \xi_J) +
(\p_{n-1} \xi_I) (\p_n \xi_J)+ (\p_n \xi_I) (\p_{n-1} \xi_J)\Big]
\Bigg)((\xi_I\,\theta)^*\tt (\xi_J\,\theta)^*)\nonumber \\
    & \  = \sum_{\substack{I,J \, : \, I\cap J=\emptyset \\
    \mathrm{ord}(I,J)=K}}
(-1)^{|J|} \Big[\,\la\, (|I|+|J|-4)\,
             + (|I|-2)\,\mu\, \Big]\, (-1)^{\alpha(I,J)} \, (\xi_I\,\theta)^*\tt (\xi_J\,\theta)^*\nonumber\\
    & \ \ \ \quad +
\sum_{i=1}^{n-2}\sum_{\substack{I,J \, : \,i\in I, i\in J  \nonumber\\ (I-\{i\})\cap (J-\{i\})=\emptyset \\
    \mathrm{ord}(I-\{i\},J-\{i\})=K}}
(-1)^{|I|+|J|+\varepsilon_i^I+\varepsilon_i^J+\, \alpha(I-\{i\},J-\{i\})}\, (\xi_I\,\theta)^*\tt (\xi_J\,\theta)^*\\
    & \ \ \ \quad +
\sum_{\substack{I,J \, : \,n-1\in I, n\in J \\ (I-\{n-1\})\cap (J-\{n\})=\emptyset \\
    \mathrm{ord}(I-\{n-1\},J-\{n\})=K}}
(-1)^{|I|+|J|+\varepsilon_{n-1}^I+\varepsilon_n^J+\, \alpha(I-\{n-1\},J-\{n\})}\, (\xi_I\,\theta)^*\tt (\xi_J\,\theta)^*\\
    & \ \ \ \quad +
\sum_{\substack{I,J \, : \,n\in I, n-1\in J \\ (I-\{n\})\cap (J-\{n-1\})=\emptyset \\
    \mathrm{ord}(I-\{n\},J-\{n-1\})=K}}
(-1)^{|I|+|J|+\varepsilon_{n}^I+\varepsilon_{n-1}^J+\, \alpha(I-\{n\},J-\{n-1\})}\, (\xi_I\,\theta)^*\tt (\xi_J\,\theta)^*.
\nonumber
\end{align}
Observe that  the last three terms in (\ref{termino}) correspond to the last three terms in (\ref{DJn2}). We still have to replace  $
\la=\p\otimes 1$ and $\mu= -\p\otimes 1 - 1\otimes \p $ in the first term in (\ref{termino}). Note that
\begin{align*}
    & (-1)^{|J|} \Big[\,  (|I|+|J|-4)\, (\p\otimes 1)
             + (|I|-2)\,(-\p\otimes 1 - 1\otimes \p) \, \Big]\, (-1)^{\alpha(I,J)} \\
   & \quad = (-1)^{|J|+\alpha(I,J)}
\Big[\,  (|J|-2)\, (\p\otimes 1)
             - (|I|-2)\,(  1\otimes \p) \, \Big].
\end{align*}
Therefore, the first term in (\ref{termino}) is equal to
\begin{align*}
  & \sum_{\substack{I,J \, : \, I\cap J=\emptyset \\
    \mathrm{ord}(I,J)=K}}
(-1)^{|J|+\alpha(I,J)}
\Big[\,  (|J|-2)\, (\p\otimes 1)
             - (|I|-2)\,(  1\otimes \p) \, \Big] \, (\xi_I\,\theta)^*\tt (\xi_J\,\theta)^*
 \\
&  \qquad =
\sum_{\substack{I,J \, : \, I\cap J=\emptyset \\
    \mathrm{ord}(I,J)=K}}
(-1)^{|J|+\alpha(I,J)}
   (|J|-2)
 \, \p (\xi_I\,\theta)^*\tt (\xi_J\,\theta)^*\\
& \hskip 6cm - \sum_{\substack{I,J \, : \, I\cap J=\emptyset \\
    \mathrm{ord}(J,I)=K}}
(-1)^{|I|+\alpha(J,I)}
   (|J|-2)
 \, (\xi_J\,\theta)^*\tt \p (\xi_I\,\theta)^*
\end{align*}
and, using (\ref{alfa}), it can be rewritten as
\begin{equation*}
  \sum_{\substack{I,J \, : \, I\cap J=\emptyset \\
    \mathrm{ord}(I,J)=K}}
(-1)^{|J|+\alpha(I,J)}
   (|J|-2)
 \, \Big[\, \p (\xi_I\,\theta)^*\tt (\xi_J\,\theta)^*
+ (-1)^{(|I|+1)(|J|+1)} (\xi_J\,\theta)^*\tt \p (\xi_I\,\theta)^* \, \Big],
\end{equation*}
obtaining the second term in (\ref{DJn2}).
\end{proof}

\

\noindent{\bf Acknowledgment.} C. Boyallian and J. Liberati were
supported in part by grants of  Conicet and  Secyt-UNC  (Argentina).



\

\end{document}